\numberwithin{equation}{section}              
\newtheorem{theorem}{Theorem}[section]
\newtheorem{lemma}{Lemma}[section]
\newtheorem{proposition}{Proposition}[section]
\newtheorem*{proposition*}{Proposition}
\newtheorem*{corollary*}{Corollary}
\newtheorem*{definitions*}{Definitions}
\newtheorem*{conjecture*}{\bf Conjecture}
\newtheorem{example}{\bf Example}[section]
\newtheorem*{example*}{\bf Example}
\theoremstyle{remark}
\newtheorem{remark}{\bf Remark}[section]
\newcommand\figurecaption{\def\@captype{figure}\caption}
\newcommand\tablecaption{\def\@captype{table}\caption}
\date{}                                     
\title{\bf{Turing-Hopf bifurcation and spatio-temporal patterns of a ratio-dependent Holling-Tanner system with diffusion}\thanks {Supported by the National Natural Science Foundation of China (No.11371112).}}
\author[a]{Qi An}
\author[a]{{ Weihua Jiang\thanks {Corresponding author.
			E-mail address: jiangwh@hit.edu.cn}}}
\affil[a]{Department of Mathematics, Harbin Institute of Technology, Harbin, 150001, P.R. China.}
\begin{document}	
	\maketitle
	\begin{abstract}
		A diffusive ratio-dependent Holling-Tanner system subject to Neumann boundary conditions is considered. The existence of multiple bifurcations, including Turing-Hopf bifurcation, Turing-Truing bifurcation, Hopf-double-Turing bifurcation and triple-Turing bifurcation, are given. Among them, the Turing-Hopf bifurcation are carried out in details by the normal form method. We theoretically prove that the system exists various spatio-temporal patterns, such as, non-constant steady state, the spatially inhomogeneous periodic or quasi-periodic solution, etc. Numerical simulations are presented to illustrate our theoretical results.\\
		\noindent
		{\small {\bf Keywords:}  Holling-Tanner system; Turing-Hopf bifurcation; normal form; spatially inhomogeneous quasi-periodic solution   }
	\end{abstract}
\section{Introduction}
For a long time, the predator-prey models have received extensive concerns from both mathematicians and biologists. The Lotka-Volterra model is one of the most classical models and was first put forward in the 1920s. With the deepening of research, this simplest ecological model is questioned because of its irrational assumptions and inaccurate predictions.
In the 1960s, May \cite{May1973Stability} first make two adjustments to it:  addition the self-regulation of prey and the incorporation of a Holling type \uppercase\expandafter{\romannumeral2 } functional response function \cite{holling1959components}. This model is also known as the Holling-Tanner prey-predator model \cite{tanner1975stability} and has the form 
 \begin{equation}\label{eqA1}
 \left\{
 \begin{aligned}
 &\frac{\mathrm{d}u}{\mathrm{d}t}=r_{1}u(1-\frac{u}{k})-\frac{quv}{u+m},&\\
 &\frac{\mathrm{d}v}{\mathrm{d}t}=r_{2}v(1-\frac{v}{\gamma u}),&
 \end{aligned}
 \right.
 \end{equation}
Here $u(t)$, $v(t)$ represent the densities of prey and predator, respectively. In addition, the parameters have the following meanings:
\begin{itemize}
	\item $r_1$, $r_2$ are the intrinsic growth rates of the prey and predator, respectively.
	\item $k$ is the carrying capacity of the prey, and $\gamma u$ play the role as the prey-dependent carrying capacity of the predator. 
	$\gamma$ is the conversion rate of prey to predator birth, and can be seen as a measure of the quality of the prey as food. 
	\item $q$ is the maximum value of prey consumed by per predator per unit time.
	\item $m$ is a saturation value. It is the value of prey required to reach half of the maximum rate of $q$.
\end{itemize}
The system \eqref{eqA1} is regarded as one of the prototypical predator–prey models has been extensively studied. A lot of interesting questions, such as the equivalence between local and global stability, collapse of two limit cycles, the uniqueness of the limit cycle and bifurcations,  have been solved \cite{saez1999dynamics,hsu1995global,hsu1998uniqueness,wollkind1988metastability,braza2003bifurcation}. 

Further consider the influence of diffusion to \eqref{eqA1}, the two species may exhibit inhomogeneous distribution in a spatial domain $\Omega \in \mathbb{R}^n$. Therefore, we should consider the following reaction-diffusion system,
 \begin{equation}\label{eqA2}
\left\{
\begin{aligned}
&\frac{\mathrm{d}u}{\mathrm{d}t}-D_{1}\Delta u=r_{1}u(1-\frac{u}{k})-\frac{quv}{u+m},&&x\in\Omega, t>0,&\\
&\frac{\mathrm{d}v}{\mathrm{d}t}-D_{2}\Delta v=r_{2}v(1-\frac{v}{\gamma u}),& &x\in\Omega, t>0,&\\
&\partial_{\eta}u=\partial_{\eta}v=0,&&x\in\partial\Omega,{t}>0,&\\
&u(x,0)=u_0(x),v(x,0)=v_0(x),&&x\in\Omega,&\\
\end{aligned}
\right.
\end{equation}
where $\eta$ is the outward unit normal vector on  $\partial\Omega$, and $D_1,D_2$ are the diffusion coefficients of prey and predator, respectively.
 The no-flux boundary condition means that the system is self-contained and closed to the exterior environment. For this diffusion model, 
  Peng and Wang in \cite{peng2005positive} investigated the existence and non-existence of the non-constant steady state solutions. Furthermore, the parameter conditions for global stability of positive constant steady state are given in \cite{peng2007global,chen2012global,qi2016study,shi2010positive}, respectively.
 Li et al. \cite{li2011hopf} considered the Turing and Hopf bifurcations. Related work on the modified system \eqref{eqA2} can also be found in \cite{peng2008stationary,huang2014bifurcations,du2004diffusive}.

With a  non-dimensionalized change of variables:
\begin{equation*}
\begin{aligned}
&u\rightarrow \frac{u}{k},& &v\rightarrow \frac{v}{\gamma k},& &{t}\rightarrow r_{1}t.&
\end{aligned}
\end{equation*}
and let
\begin{equation*}
\begin{aligned}
&d_{1}=\frac{D_{1}}{ r_{1}},& &{d}_{2}=\frac{D_{2}}{ r_{1}},& &a=\frac{q\gamma}{r_{1}},& &r=\frac{r_{2}}{r_{1}},& &b=\frac{m}{k}<1.&
\end{aligned}
\end{equation*}
We obtain the simplified dimensionless ratio-dependent Holling-Tanner system with diffusion
\begin{equation}\label{eqA3}
\left\{
\begin{aligned}
&\frac{\mathrm{d}}{\mathrm{d}t}u-d_{1}\Delta u=u(1-u)-\frac{auv}{u+b},&&x\in\Omega,~t>0,&\\
&\frac{\mathrm{d}}{\mathrm{d}t}v-d_{2}\Delta v=rv(1-\frac{v}{u}),& &x\in\Omega,~t>0,&\\
&\partial_{\eta}u=\partial_{\eta}v=0,& &x\in\partial\Omega,~t>0,&\\
&u(x,0)=u_0(x), v(x,0)=v_0(x),& &x\in\Omega.&\\
\end{aligned}\right.
\end{equation}
For system \eqref{eqA3}, Ma and Li \cite{ma2013bifurcation} studied the Hopf bifurcation and the steady state bifurcation of simple and double eigenvalues. Banerjee, M. and Banerjee, S. \cite{banerjee2012turing} investigated  the Turing and non-Turing patterns with $\Omega$ is a two-dimensional bounded connected square domain. The formation of various spatio-temporal patterns have been extensively studied in recent years \cite{baurmann2007instabilities,rovinsky1992interaction,song2016turing,yang2016spatial,Yi2010Spatiotemporal,yi2009bifurcation,Su2009Hopf}.  
 And Turing-Hopf bifurcation can be regarded as one of the important mechanisms to generate the spatio-temporal patterns. Study the Turing-Hopf bifurcation of the predator-prey system can helps to understand more ecological phenomenas. Therefore, we will study this problem in this Holling-Tanner system.

For convenience, we consider the spatial domain $\Omega = (0,l\pi)$ with $l\in \mathbb{R}^+$, 
\begin{equation}\label{eqA}
\left\{
\begin{aligned}
&\frac{\mathrm{d}}{\mathrm{d}t}u-d_{1}\Delta u=u(1-u)-\frac{auv}{u+b},&&x\in(0,l\pi),~t>0,&\\
&\frac{\mathrm{d}}{\mathrm{d}t}v-d_{2}\Delta v=rv(1-\frac{v}{u}),& &x\in(0,l\pi),~t>0,&\\
&u_x(0,t)=v_x(0,t)=0, \;\;u_x(i\pi,t)=v_x(l\pi,t)=0,& &t>0,&\\
&u(x,0)=u_0(x), v(x,0)=v_0(x),& &x\in(0,l\pi).&\\
\end{aligned}\right.
\end{equation}
Choosing the birth ratio $r$ and the domain size $l$ as the main  bifurcation parameters to consider the Turing-Hopf bifurcation, we show that the system \eqref{eqA} exhibits a variety of spatio-temporal patterns. Among them, the existence of the spatially inhomogeneous quasi-periodic orbits is proved first time in both theoretically and numerically, to the best of our knowledge. We point out that our results are follow the algorithm in \cite{An2017}, which is mainly based on the central manifold theorem \cite{Lin1992Centre} and the normal form theory \cite{Faria2000Normal}. 

The paper is organized as follows. In Section 2, we devote to the bifurcation analysis of the ratio-dependent Holling-Tanner system \eqref{eqA}. The conditions of the  existence of Hopf bifurcation, steady state bifurcation, Turing-Hopf bifurcating, Bogdanov- Tankens bifurcation, Hopf-double-zero bifurcation and Triple-zero bifurcation are obtained. In Section 3, we give the detailed dynamics of the \eqref{eqA} with the parameter near the Turing-Hopf singularity. Finally a conclusion section complete the paper.

\section{Bifurcation analysis of the ratio-dependent Holling-Tanner system}

The system \eqref{eqA} has two non-negative constant steady states: $(1,0)$ and $(u_{0},v_{0})$, where
$$u_{0}=v_{0}=\dfrac{1}{2}[(1-a-b)+\sqrt{(a+b-1)^{2}+4b}]<1,$$
satisfies $(u_{0}-1)(u_{0}+b)+au_{0}=0.$
Among them, $(1,0)$ is always an unstable equilibrium point. In this section, we will mainly study the effect of the birth ratio $r$ and the domain size $l$ to the dynamics of system \eqref{eqA} near the coexistence equilibrium point $(u_{0},v_{0})$.

Define the real-valued phase space $$X:=\{(u,v)\in{H^{2}(0,\pi)\times H^{2}(0,\pi)} : (u_{x},v_{x})|_{x=0,\pi}=0\},$$ 
and the corresponding complex phase space 
$X_{\mathbb{C}}:=\{ x_{1}+ix_{2} : x_{1},x_{2}\in X \},$ 

By the translation $\hat{u}=u-u_{0}$, $\hat{v}=v-v_{0}$ and the space scale $x \rightarrow {x}/{l}$, the system \eqref{eqA} can be written as an abstract equation in phase space $X_{\mathbb{C}}$, 
\begin{equation}\label{eqB}
\frac{\mathrm{d}}{\mathrm{d}t}U =D(r,l)\Delta U +L(r,l)U+F(r,l,U).
\end{equation}
Here $U=(\hat{u},\hat{v})^{\mathrm{T}}\in X_{\mathbb{C}}$,
 $D(r,l)=\dfrac{1}{l^2}\,\mathrm{diag}(d_1,d_2)$,
$L(r,l):X_{\mathbb{C}}\rightarrow X_{\mathbb{C}} $ is the linearized operator given by 
\begin{equation}\label{eqL}
L(r,l)={\left(\begin{array}{cc}
	A_0 &B_0\\
	r & -r
	\end{array}\right),}
\end{equation}
with
\begin{align*}
&A_0=1-2u_{0}-\dfrac{abu_{0}}{(b+u_{0})^{2}}=\dfrac{u_{0}}{b+u_{0}}(1-b-2u_{0}),&\\
&B_0=-\dfrac{au_{0}}{b+u_{0}}=u_{0}-1<0.&
\end{align*}
$F(r,l,\cdot):X_{\mathbb{C}}\rightarrow X_{\mathbb{C}} $ is a $C^{k}$ $(k\geq3)$ function and  given by
\begin{equation}\label{eqF}
F(r,l,\phi)=\begin{pmatrix}
f_{1}(r,l,\phi)-A_0\phi_{1}-B_0\phi_{2}\\
f_{2}(r,l,\phi) -\;r\phi_{1}\;+\;r\phi_{2}
\end{pmatrix}
\end{equation}
with
\begin{align*}
&f_{1}(r,l,\phi)=(\phi_{1}+\!u_{0})(1-\phi_{1}-u_{0})-
\frac{a(\phi_{1}+u_{0})(\phi_{2}+\!v_{0})}{\phi_{1}+b+\!u_{0}},&\\
&f_{2}(r,l,\phi)=r(\phi_{2}+v_{0})(1-\frac{\phi_{2}+v_{0}}{\phi_{1}+u_{0}}),&
\end{align*}
for $\phi=(\phi_{1},\phi_{2})\in X_{\mathbb{C}}$ and satisfies $F(r,l,0)=0,~D_{\phi}F(r,l,0)=0$.

The linearized system of \eqref{eqA} at $(u_0,v_0)$ is
\begin{equation}
\frac{\mathrm{d}}{\mathrm{d}t}U =D(r,l)\Delta U +L(r,l)U.
\end{equation}
And the corresponding characteristic equation is
\begin{equation}\label{eqcha}
\mathbf{\Delta}(\lambda)y=\lambda y -D(r,l)\Delta y -L(r,l)y=0,
\end{equation}
for some $y\in\mathrm{dom}(\Delta)\backslash\{0\},$ which is equivalent to the sequence of characteristic equations
\begin{equation}\label{characteristic}
\begin{aligned}
&\lambda^{2}-{T_{n}}(r,l)\lambda+{D_{n}}(r,l)=0,&&n=0,1,2,\cdots&
\end{aligned}
\end{equation}
with
\begin{align*}
&{T_{n}}(r,l)=A_{0}-(d_{1}+d_{2})\dfrac{n^{2}}{l^{2}}-r,&\\
&{D_{n}}(r,l)=d_{2}\dfrac{n^{2}}{l^{2}}(d_{1}\dfrac{n^{2}}{l^{2}}-A_{0})+r(d_{1}\dfrac{n^{2}}{l^{2}}-A_{0}-B_{0}).&
\end{align*}

It is obvious that
\begin{equation}\label{eqTnDn}
\begin{aligned}
&{T_{n}}(r,l)<0 \hspace{0.3cm}\Longleftrightarrow\hspace{0.3cm} r>r^{H}_{n}(l):=A_{0}-(d_{1}+d_{2})\frac{n^{2}}{l^{2}},\\
&{D_{n}}(r,l)>0\hspace{0.2cm}\Longleftrightarrow\hspace{0.3cm} r>r_{n}^{T}(l):=-{d_{2}\frac{n^{2}}{l^{2}}(d_{1}\frac{n^{2}}{l^{2}}-A_{0})}/{(d_{1}\frac{n^{2}}{l^{2}}-A_{0}-B_{0})},
\end{aligned}
\end{equation}
and we can get the following conclusion directly.
\begin{lemma}
	 For system  \eqref{eqA}, assume that $a,r,l,d_1,d_2>0$, $1>b>0$.  
	 If $a\leq\dfrac{(b+1)^{2}}{2(1-b)},$ then the constant steady state $(u_0,v_0)$ of \eqref{eqA} is local asymptotic stability for arbitrary $r,l>0$.

\end{lemma}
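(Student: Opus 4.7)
The plan is to reduce the stability of $(u_0,v_0)$ in the reaction–diffusion system to a coefficient check on the scalar characteristic equations \eqref{characteristic}. By the standard spectral result for the linearization about an equilibrium of a sectorial operator, local asymptotic stability is equivalent to $\mathrm{Re}\,\lambda<0$ for every root of $\lambda^{2}-T_{n}(r,l)\lambda+D_{n}(r,l)=0$ and every $n\ge 0$; by Routh--Hurwitz in dimension two this is $T_{n}(r,l)<0$ and $D_{n}(r,l)>0$ for all such $n$. Using the equivalences recorded in \eqref{eqTnDn}, it then suffices to show that for every admissible $l>0$ and every $n\ge0$ one has $r_{n}^{H}(l)\le 0$ and $r_{n}^{T}(l)\le 0$, because the hypothesis should be strong enough to make these thresholds non-positive uniformly in $r>0$.

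The main (and only real) step is to extract from the hypothesis $a\le(b+1)^{2}/[2(1-b)]$ the sign information $A_{0}\le 0$. Since $u_{0}/(b+u_{0})>0$, we have $A_{0}\le 0\Longleftrightarrow 1-b-2u_{0}\le 0\Longleftrightarrow u_{0}\ge (1-b)/2$. Substituting the explicit formula $u_{0}=\tfrac12[(1-a-b)+\sqrt{(a+b-1)^{2}+4b}]$, the condition $u_{0}\ge(1-b)/2$ becomes $\sqrt{(a+b-1)^{2}+4b}\ge a$; squaring (both sides are non-negative because $b>0$ and $a>0$) and cancelling $a^{2}$ reduces the inequality to $2a(b-1)+(b+1)^{2}\ge 0$, i.e.\ $a\le(b+1)^{2}/[2(1-b)]$, which is precisely the hypothesis. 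This algebraic equivalence is the technical heart of the lemma; everything else is a positivity check.

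With $A_{0}\le 0$ secured, the conclusions are immediate. For the trace, $T_{n}(r,l)=A_{0}-(d_{1}+d_{2})n^{2}/l^{2}-r\le -r<0$ for every $n\ge 0$ and every $r,l>0$. For the determinant, rewrite
\begin{equation*}
D_{n}(r,l)=d_{2}\,\tfrac{n^{2}}{l^{2}}\!\left(d_{1}\tfrac{n^{2}}{l^{2}}-A_{0}\right)+r\!\left(d_{1}\tfrac{n^{2}}{l^{2}}-A_{0}-B_{0}\right),
\end{equation*}
and note that $-A_{0}\ge 0$ while $-B_{0}=1-u_{0}>0$ (because $u_{0}<1$ as recorded in the paper). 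Hence the first summand is non-negative and the second is strictly positive, giving $D_{n}(r,l)>0$ for all $n\ge 0$ and all $r,l>0$. Both Routh--Hurwitz conditions therefore hold for every Fourier mode, which yields the asserted local asymptotic stability of $(u_{0},v_{0})$.
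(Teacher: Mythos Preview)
Your proof is correct and follows essentially the same approach as the paper: both reduce the claim to $A_{0}\le 0$ and then conclude $T_{n}<0$, $D_{n}>0$ for all $n$. The paper merely asserts that $a\le (b+1)^{2}/[2(1-b)]$ implies $A_{0}\le 0$, whereas you supply the full algebraic verification via $u_{0}\ge(1-b)/2$; otherwise the arguments coincide.
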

\begin{proof} 
Since $a\leq\dfrac{(b+1)^{2}}{2(1-b)}$, we have $A_{0}\leq 0$. Which means $r_{n}^{H}(l),r_{n}^{T}(l)\leq 0$ for all $l>0$ and $n\in\mathbb{N}$. Consequently, we obtain ${T_{n}}(r,l)<0$ and ${D_{n}}(r,l)>0$ for arbitrary $r,l>0$ and $n\in\mathbb{N}$. Or more precisely, all the eigenvalues of character equation \eqref{eqcha} have negative real part.   This completes the proof.
\end{proof}

In the following, we will mainly study the dynamics of the system \eqref{eqA} when $a>\dfrac{(b+1)^{2}}{2(1-b)}$. 
For the further study, we define two  auxiliary  functions 
$$g_{1}(x)=\dfrac{d_{2}x(A_{0}-d_{1}x)}{d_{1}x-A_{0}-B_{0}},\hspace{0.5cm}x\geq 0,$$
and
$$g_{2}(x)=A_{0}-(d_{1}+d_{2})x,\hspace{0.5cm}x\geq 0.$$
These two functions have the following properties.
\begin{proposition}\label{pro1} Assume that $d_1,d_2>0$, $1>b>0$, $a>\dfrac{(b+1)^{2}}{2(1-b)}$. Then we have
	\begin{enumerate}
		\item $\left\{\begin{aligned}
		&g_{1}'(x)>0,\;\, \textit{when}\;x\in [0,\hat{x}),&\\
		&g_{1}'(x)= 0,\;\,\textit{when}\;x=\hat{x}, \hspace{1cm}\textit{with}\; \hat{x}=\dfrac{1}{d_{1}}[(A_{0}+B_{0})+\sqrt{B_{0}(A_{0}+B_{0})}]<\frac{A_{0}}{d_{1}},& \\
		&g_{1}'(x)<0,\;\,\textit{when}\;x\in(\hat{x},+\infty),&
		\end{aligned}\right.$
		\item $g_{1}(0)=g_{1}(\dfrac{A_{0}}{d_{1}})=0,$
		\item $g_2(x)$ is a linear decreasing function, and $g_{2}(0)=A_{0}>0,~ g_{2}(\dfrac{A_{0}}{d_{1}+d_2})=0,~ g_{2}(\dfrac{A_{0}}{d_{1}})<0$.
		\item There is only one intersection point $\bar{x}$ of $g_{1}(x)$ and $g_{2}(x)$ in the interval $(0,\frac{A_{0}}{d_{1}})$.
		Here $\bar{x}=\dfrac{1}{2d_{1}^{2}}[2d_{1}A_{0}+(d_{1}+d_{2})B_{0}+\sqrt{(d_{1}+d_{2})^{2}B_{0}^{2}+4d_{1}d_{2}A_{0}B_{0}}]$.
		\item $ r_n^T (l)=g_1(\frac{n^2}{l^2})\leq g_1(\hat{x})$,\quad $ r_n^H(l)=g_2(\frac{n^2}{l^2})\leq A_0$. 
	\end{enumerate} 
\end{proposition}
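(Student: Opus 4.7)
The plan is to fix the sign structure of the constants at the outset and then treat (1)--(3) as routine calculus, (4) as the one genuinely substantive step, and (5) as a corollary. Under the standing assumption $a>(b+1)^{2}/(2(1-b))$ I would first record that $A_0>0$ (which, using the defining relation $(u_0-1)(u_0+b)+au_0=0$, amounts to $1-b-2u_0>0$) and $B_0=u_0-1<0$. The single algebraic identity I would then isolate and use everywhere after is
\[
A_0+B_0<0, \qquad \text{equivalently} \qquad A_0<1-u_0,
\]
which follows by clearing denominators in $\tfrac{u_0(1-b-2u_0)}{b+u_0}<1-u_0$ and reducing to $-u_0^{2}<b$. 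In particular the denominator $d_1 x-A_0-B_0>0$ for all $x\geq 0$, so $g_1$ is smooth on $[0,\infty)$ and the radicands appearing in the statement are nonnegative.

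For (1), differentiating $g_1$ by the quotient rule yields, after simplification, a numerator $d_2\bigl[-d_1^{2}x^{2}+2(A_0+B_0)d_1\,x-A_0(A_0+B_0)\bigr]$, a downward parabola whose product of roots is $A_0(A_0+B_0)/d_1^{2}<0$; Vieta then forces exactly one positive root, which the quadratic formula identifies with the asserted $\hat{x}$. The bound $\hat{x}<A_0/d_1$ reduces after rearrangement and squaring to $A_0>0$, and the sign pattern of the parabola yields the stated monotonicity. Items (2) and (3) are immediate by substitution: $g_1$ vanishes at $0$ and at $A_0/d_1$, and the value of the linear $g_2$ at the three indicated points is a direct computation.

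Step (4) is the main one. Setting $g_1(x)=g_2(x)$ and clearing the positive denominator I obtain
\[
d_1^{2}x^{2}-\bigl[2A_0 d_1+(d_1+d_2)B_0\bigr]x+A_0(A_0+B_0)=0,
\]
whose constant term $A_0(A_0+B_0)$ is again negative, so Vieta gives exactly one positive root; a direct discriminant computation shows it is the $+$ branch, matching the stated $\bar{x}$. The only nonroutine point is showing that the discriminant $(d_1+d_2)^{2}B_0^{2}+4d_1 d_2 A_0 B_0$ is nonnegative: since $B_0<0$ this is equivalent to $4d_1 d_2 A_0\leq(d_1+d_2)^{2}(-B_0)$, which I would establish by combining AM-GM $(d_1+d_2)^{2}\geq 4d_1 d_2$ with the preliminary estimate $-B_0=1-u_0>A_0$. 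Finally, to pin $\bar{x}$ inside $(0,A_0/d_1)$ I would apply the intermediate value theorem to $g_1-g_2$, which equals $-A_0<0$ at $x=0$ and $A_0 d_2/d_1>0$ at $x=A_0/d_1$; uniqueness of the positive quadratic root then identifies this IVT intersection with $\bar{x}$.

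Item (5) is then immediate from (1)--(3): $r_n^{T}(l)=g_1(n^{2}/l^{2})\leq g_1(\hat{x})$ because $\hat{x}$ is the global maximiser of $g_1$ on $[0,\infty)$, and $r_n^{H}(l)=g_2(n^{2}/l^{2})\leq g_2(0)=A_0$ because $g_2$ is decreasing. The main obstacle throughout is really bookkeeping: the single inequality $A_0+B_0<0$ must be established at the outset, since it silently underpins positivity of $\hat{x}$, the bound $\hat{x}<A_0/d_1$, nonnegativity of the discriminant in (4), and the Vieta argument for a unique positive root of the intersection quadratic.
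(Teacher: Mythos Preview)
Your proof is correct and, in fact, considerably more detailed than what the paper supplies: the paper simply says ``The proof is trivial and will be omitted.'' Your argument is sound throughout, including the key preliminary inequality $A_0+B_0<0$ and its repeated use. One small remark: in step~(4) your separate verification that the discriminant $(d_1+d_2)^2 B_0^2+4d_1 d_2 A_0 B_0$ is nonnegative is unnecessary, since once you have observed that the constant term $A_0(A_0+B_0)$ is negative (hence the product of the roots of the monic quadratic is negative), positivity of the discriminant is automatic. The AM--GM argument you give is correct but redundant.
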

\begin{proof}
	The proof is trivial and will be omitted.
\end{proof}

It is easy to say that the characteristic equation \eqref{eqcha} has pure imaginary eigenvalues for some $r,l>0$ only when there exists a $n\in\mathbb{N}$, such that $T_n(r,l)=0, \; D_n(r,l)>0$. Which can occurs if
\begin{equation}\label{eqrH}
r = r_n^H(l)>r_{n}^{T}(l)>0, \hspace{1cm} n\in\mathbb{N}.
\end{equation}
And \eqref{eqcha} has zero eigenvalues for some $r,l>0$ if and only if there is a $n\in\mathbb{N}$, such that $D_n(r,l)=0$. Which can occurs when
\begin{equation}\label{eqrS}
r = r_{n}^{T}(l)>0, \hspace{1cm} n\in\mathbb{N}.
\end{equation}
 
With the combination of above analysis, we have the following conclusions about the eigenvalues of the characteristic equation \eqref{eqcha} with zero real part.
\begin{lemma}
	 Assume that $d_1,d_2,r,l>0$, $1>b>0$, $a>\dfrac{(b+1)^{2}}{2(1-b)}$. $r_{n}^{H}(l)$, $r_{n}^{T}(l)$ are defined by \eqref{eqTnDn}. Let \begin{equation}
	 l_n^H:= n \sqrt{\frac{1}{\bar{x}}}, \hspace{1cm}
	 l_n^{T}:= n \sqrt{\frac{d_1}{A_0}},  \hspace{1cm}
	 \forall n \in \mathbb{N}.
	 \end{equation} 
	 And $N_1(l),N_2(l)\in\mathbb{N}$ are two non-negative integers, such that 
	 $l_{N_{1}}^{H}<l\leq l_{N_{1}+1}^{H},\;\; l_{N_{2}}^{T}<l\leq l_{N_{2}+1}^{T}.$ Then we have:
	 \begin{enumerate}
	 	\item The characteristic equation \eqref{eqcha} has one pair of simple pure imaginary eigenvalues $\pm\mathrm{i}\sqrt{D_n(r_n^H(l),l)}:=\pm \mathrm{i}w_{n}$, when $r=r_{n}^{H}(l)$ $(0\leq n \leq N_{1})$.
	 	\item If $0<l\leq \sqrt{\frac{d_1}{A_0}}$ $(i.e., \; N_2 = 0)$, then the characteristic equation \eqref{eqcha} has no zero eigenvalues.
	 	\item If $l>\sqrt{\frac{d_1}{A_0}}$ $(i.e., \; N_2 \geq 1)$, then the characteristic equation \eqref{eqcha} has at least one zero eigenvalue when $r=r_{n}^{T}(l)$ $(1\leq n \leq N_{2})$.
	 \end{enumerate}	 
\end{lemma}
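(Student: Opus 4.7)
The plan is to reduce every claim to the geometry of the two auxiliary functions $g_1,g_2$ from Proposition~\ref{pro1}, exploiting the key identities
$r_n^H(l)=g_2(n^2/l^2)$ and $r_n^T(l)=g_1(n^2/l^2)$ that follow directly from \eqref{eqTnDn}. I would first recall from the mode-wise characteristic polynomial \eqref{characteristic} that the roots of $\lambda^2-T_n\lambda+D_n=0$ form a pair of simple pure imaginary eigenvalues exactly when $T_n=0$ together with $D_n>0$, and contain $0$ exactly when $D_n=0$. Combined with \eqref{eqrH}--\eqref{eqrS}, this reduces the lemma to the combinatorial question: for which $(n,l)$ are $r_n^H(l)$ and $r_n^T(l)$ positive, and when is $r_n^H(l)>r_n^T(l)$?

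Next I would read off the signs of $g_1,g_2$ from items (1)--(4) of Proposition~\ref{pro1}. Since $g_1$ vanishes at $0$ and at $A_0/d_1$ and is positive in between, $r_n^T(l)>0$ holds iff $n\geq 1$ and $n^2/l^2\in(0,A_0/d_1)$, i.e.\ iff $l>l_n^T$. Similarly, because $g_2$ strictly decreases, $g_2(0)=A_0>0=g_1(0)$, and $g_2,g_1$ meet only at $\bar x\in(0,A_0/d_1)$, we have $g_2>g_1>0$ on $(0,\bar x)$, with $\bar x<A_0/(d_1+d_2)$ as a by-product of $g_2(\bar x)=g_1(\bar x)>0$ and the monotonicity of $g_2$. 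Consequently $r_n^H(l)>r_n^T(l)\geq 0$ iff $n^2/l^2<\bar x$, i.e.\ iff $l>l_n^H$ for $n\geq 1$, while for $n=0$ one trivially has $r_0^H(l)=A_0>0=r_0^T(l)$.

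With these two inequalities in hand the three conclusions fall out: for part~(1), the hypothesis $l_{N_1}^H<l$ forces $l>l_n^H$ for every $0\leq n\leq N_1$, so by the previous paragraph $r_n^H(l)>r_n^T(l)\geq 0$; setting $r=r_n^H(l)$ then yields $T_n=0$ and $D_n(r_n^H(l),l)>0$, so the $n$-th quadratic has the simple pair $\pm\mathrm{i}\sqrt{D_n(r_n^H(l),l)}=\pm\mathrm{i}\omega_n$. For part~(2), $l\leq\sqrt{d_1/A_0}=l_1^T$ gives $l\leq l_n^T$ for every $n\geq 1$, hence $r_n^T(l)\leq 0$, and since $r_0^T(l)=0$ is also excluded by $r>0$, no $r>0$ can make any $D_n$ vanish. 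For part~(3), $l>l_1^T$ gives $N_2\geq 1$ and by definition of $N_2$ one has $l>l_n^T$ for every $1\leq n\leq N_2$, so $r=r_n^T(l)>0$ is admissible and produces $D_n=0$, i.e.\ a zero eigenvalue of \eqref{eqcha}.

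I do not expect any serious obstacle; the argument is almost entirely bookkeeping on top of Proposition~\ref{pro1}. The only slightly delicate point is the strict inequality $\bar x<A_0/(d_1+d_2)$ used to guarantee $r_n^H(l)>0$ whenever $l>l_n^H$, but as noted this is immediate from the positivity of $g_1(\bar x)$ and the linear decrease of $g_2$. Care must also be taken with the boundary case $n=0$, where $l_0^H=l_0^T=0$ so conditions 1 and the exclusion in 2 have to be read separately, but this is purely cosmetic.
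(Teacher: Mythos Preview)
Your approach is essentially the same as the paper's: both reduce everything to the geometry of $g_1,g_2$ via Proposition~\ref{pro1} and the identifications $r_n^H(l)=g_2(n^2/l^2)$, $r_n^T(l)=g_1(n^2/l^2)$, and both read off the sign conditions on $r_n^H,r_n^T$ from the shapes of those curves. The only point you pass over is the word \emph{simple} in part~(1): simplicity here means as an eigenvalue of the full operator \eqref{eqcha}, not merely of the $n$-th quadratic factor, so one must rule out $\pm\mathrm{i}\omega_n$ recurring in some other mode $j\neq n$; the paper handles this by observing that $T_n(r,l)$ is strictly monotone in $n$ (for fixed $r,l$), so $T_j(r_n^H(l),l)\neq 0$ whenever $j\neq n$.
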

\begin{proof} 1. According to 4 in {\bf Proposition} \ref{pro1}, we have $r_n^H(l)>r_{n}^{T}(l)$ $(n\in\mathbb{N})$ only when $0<\frac{n^2}{l^2}<\bar{x}$ $(i.e., \; l>l_n^H)$. From \eqref{eqrH} and $l_{N_{1}}^{H}<l\leq l_{N_{1}+1}^{H}$, the existence of the pure imaginary eigenvalues when  $r = r_n^H$  $(0\leq n \leq N_{1})$ has been proved. Meanwhile, $T_{n}(r,l)$ about $r$ and $n$ is a strictly monotonic function, which means that $T_n(r_n^H,l)=0$ $(0\leq n \leq N_{1})$ and $T_j(r_n^H,l)\neq 0$ $(j\neq n)$. This completes the proof.
	
	2. Since $A_0+B_0<0$, we can get $r_{n}^{T}(l)>0$ $(n\geq 1)$ is equivalent to $l>l_n^{T}$ and $r_0^{T} = 0$.  If $0<l\leq \sqrt{\frac{d_1}{A_0}}$, that means $D_n(r,l)>0$ for all $r>0$, which completes the proof.
	
	3. If $l>\sqrt{\frac{d_1}{A_0}}$, then $r_{n}^{T}(l)>0$ if and only if $1\leq n \leq N_{2}$. From \eqref{eqrS}, the statement is proved.
\end{proof}
Further consider the impact of the domain size $l$ on the system \eqref{eqA}. Define the following sets
\begin{equation}\label{eql}
\begin{aligned}
&L_{TT}:=\{l\in S_1: r_{i}^{T}(l)=r_{j}^{T}(l),~1\leq i<j\leq N_{2}\},\\
&L_{TH}:=\{l\in S_2: r_{i}^{H}(l)=r_{j}^{T}(l),~0\leq i\leq N_{1}<j\leq N_{2}\},\\
&L_{TTH}:=\{l\in S_2: r_{i}^{H}(l)=r_{j}^{T}(l)=r_{k}^{T}(l),~0\leq i\leq N_{1}<j<k\leq N_{2}\},
\end{aligned}
\end{equation}
with 
$S_1:=(l_{N_{2}}^{T},l_{N_{2}+1}^{T}],$ $S_2:=({l}_{N_{1}}^{H},{l}_{N_{1}+1}^{H}]\cap(l_{N_{2}}^{T},l_{N_{2}+1}^{T}]$ and $N_2\geq 1.$ Each set has only a limited number of elements, since they composed of the roots of a finite number of polynomials which satisfy the condition $l\in S_i (i=1,2)$.
\begin{example}

	Let $d_1=0.417243$, $d_2=4.697383$, $a=1.472554$, $b=0.045949$. When $N_1=0$, $N_2=3$,  we can calculate that $L_{TT}=\{3.022593,  3.617713\}$, $L_{TH} =\{3.022593\}$ and $L_{TTH}=\{3.022593\}$. 
\begin{figure}[htbp]
\begin{minipage}{0.48\linewidth}
	If choose $l = 3.022593$, then we have   		
\begin{equation*}
\begin{aligned}
& r_{0}^{H}(l)= 0.501219,\quad r_n^H(l)\leq g_1(\bar{x})\;(n\geq 1),\\
&r_{1}^{T}(l)= 0.501219, \quad r_{2}^{T}(l)= 1.084062,\\ &r_{3}^{T}(l)= 0.501219,\quad r_{n}^{T}(l)\leq 0 \;(n\geq 4).
\end{aligned}
\end{equation*}
For more intuitive understanding, please refer to Figure \ref{fig-g}.\\\\	
\end{minipage}
\begin{minipage}{0.48\linewidth} 
\centering                                      
\includegraphics[scale=0.36]{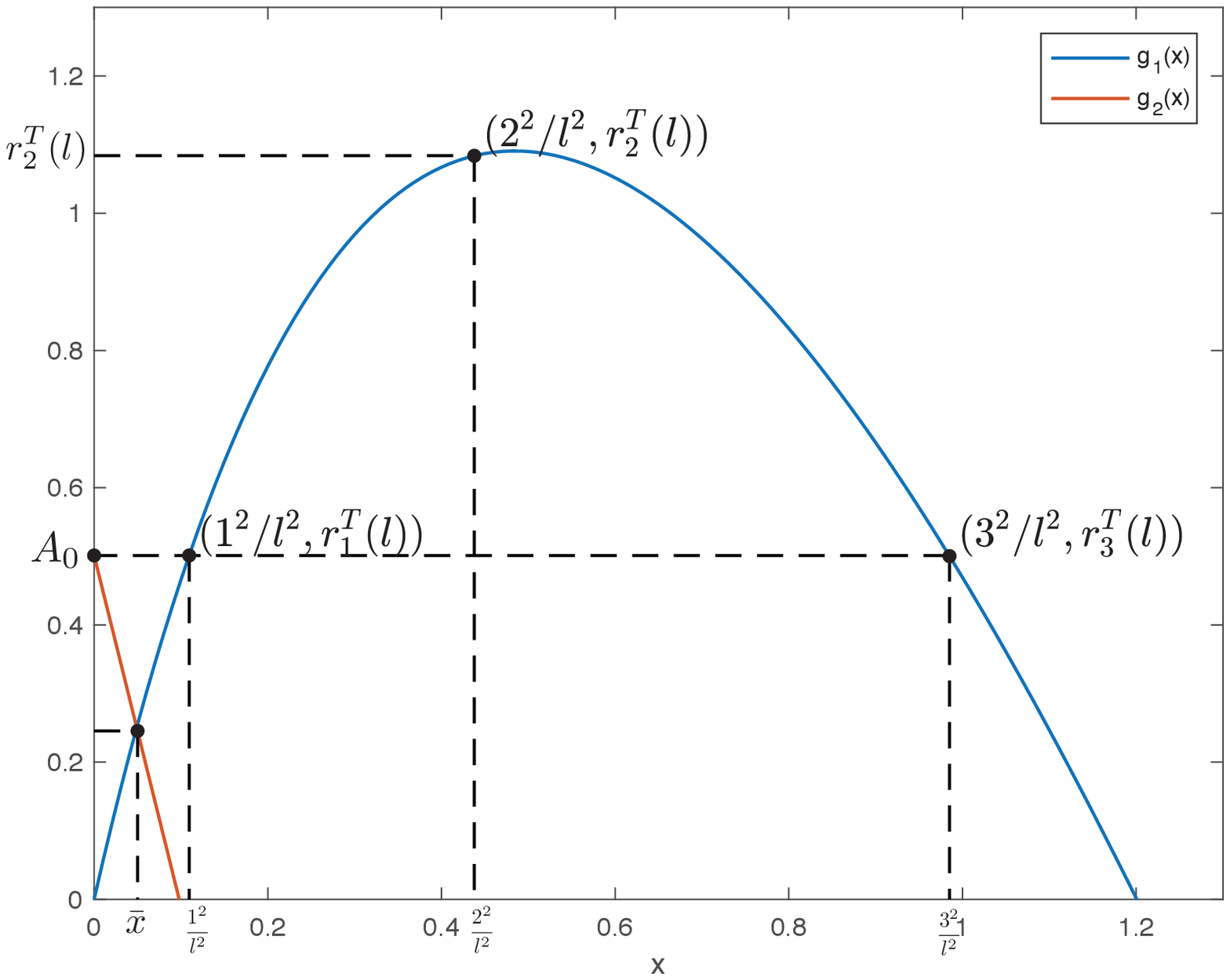}
\caption{}          
\label{fig-g}
\end{minipage}
\end{figure}
\end{example}

Benefit from \eqref{eql}, we have a more accurate conclusion about the eigenvalues with zero real part in the following.
\begin{theorem}\label{theorem1}
	Assume that $d_1,d_2,r>0$, $1>b>0$, $a>\dfrac{(b+1)^{2}}{2(1-b)}$, $l>\sqrt{\frac{d_1}{A_0}}$. $r_{n}^{H}(l)$, $r_{n}^{T}(l)$, $L_{TT}$, $L_{TH}$, $L_{TTH}$ are defined by \eqref{eqTnDn} and \eqref{eql}, respectively. 
	\begin{enumerate}
		\item If $l\notin L_{TT}\cup L_{TH}$, then the characteristic equation \eqref{eqcha} has just one pair of pure imaginary eigenvalues $\pm \mathrm{i}\, \omega_i$ when $r = r_i^H(l)\,(0\leq i\leq N_1)$, and one zero eigenvalue when $r = r_{j}^{T}(l)\,(1\leq j\leq N_2)$. 
		
        \item If $l\in L_{TH}\setminus L_{TT}$, then the characteristic equation \eqref{eqcha} has one pair of pure imaginary eigenvalues $\pm \mathrm{i} \omega_i$ and one zero eigenvalue  when $r = r_i^H(l)=r_{j}^{T}(l)$. Here $0\leq i\leq N_1 <j \leq N_2$ are two non-negative integers which satisfy $r_i^H(l)=r_{j}^{T}(l)$.
        
	    
	    \item If $l\in L_{TT}\setminus (L_{TH}\cup \{l_{N_1+1}^H\})$ (or $l=l_{N_1+1}^H\notin L_{TT}$),  then the characteristic equation \eqref{eqcha} has two zero eigenvalues when $r = r_{i}^{T}(l) = r_{j}^{T}(l)$ (or $r = r_{N_1+1}^H(l) = r_{N_1+1}^S(l)$). Here $1\leq i< j\leq N_2$ are two non-negative integers which  satisfy $r_{i}^{T}(l)=r_{j}^{T}(l)$.
	    
	    \item If $l=l_{N_1+1}^H\in L_{TT}$, then the characteristic equation \eqref{eqcha} has three zero eigenvalues when $r = r_{N_1+1}^H(l) = r_{N_1+1}^{T}(l)$.
	    \item If $l\in L_{TTH}$,  then the characteristic equation \eqref{eqcha} has one pair of pure imaginary eigenvalues $\pm \mathrm{i}\, \omega_i$ and two zero eigenvalues when $r = r_i^H(l) = r_{j}^{T}(l)=r_k^{T}(l)$. Here $0\leq i\leq N_1 <j<k \leq N_2$ are three non-negative integers which satisfy $r_{i}^{T}(l)=r_{j}^{T}(l) = r_k^{T}(l)$.
	\end{enumerate}
\end{theorem}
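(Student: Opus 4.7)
The plan is to exploit the decomposition of the spectrum of \eqref{eqcha} as the disjoint union, over $n\in\mathbb{N}$, of the spectra of the $2\times 2$ blocks with characteristic polynomial \eqref{characteristic}. A single block $\lambda^2 - T_n(r,l)\lambda + D_n(r,l) = 0$ contributes: a pure imaginary pair iff $T_n=0$ and $D_n>0$; a simple zero iff $D_n=0$ and $T_n\neq 0$; and a double zero iff $T_n=D_n=0$ simultaneously. By \eqref{eqTnDn} together with the previous lemma, inside the admissible ranges the condition $T_n(r,l)=0$ with $D_n>0$ is met at the unique value $r=r_n^H(l)$, while $D_n(r,l)=0$ is met at the unique value $r=r_n^T(l)$. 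Thus each case reduces to counting how many of these critical values coincide at the prescribed $l$, and to detecting whether any coincidence occurs \emph{inside a single block}.

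Two structural facts will drive the bookkeeping. First, $n\mapsto r_n^H(l)=g_2(n^2/l^2)$ is strictly decreasing in $n$ because $g_2$ is affine decreasing, so within $0\leq n\leq N_1$ the Hopf values are already pairwise distinct; any coincidence of critical values must therefore be of Hopf--Turing or Turing--Turing type. Second, by Proposition \ref{pro1}(4) together with $l_n^H=n/\sqrt{\bar{x}}$, the identity $l=l_n^H$ is equivalent to $n^2/l^2=\bar{x}$, and since $\bar{x}$ is the intersection of $g_1$ and $g_2$ one obtains $r_n^H(l)=g_2(\bar{x})=g_1(\bar{x})=r_n^T(l)$. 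Hence at the boundary $l=l_{N_1+1}^H$ the Hopf and Turing curves with index $N_1+1$ meet \emph{inside the same block}, so that block satisfies $T_{N_1+1}=D_{N_1+1}=0$ and contributes a double zero root on its own.

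With these ingredients each case becomes a mechanical checklist using the definitions in \eqref{eql}. For the generic case $l\notin L_{TT}\cup L_{TH}$, all critical values are distinct, so setting $r=r_i^H(l)$ degenerates exactly the $i$-th block (imaginary pair) and setting $r=r_j^T(l)$ degenerates exactly the $j$-th block (simple zero). If $l\in L_{TH}\setminus L_{TT}$, the indices $i\leq N_1<j$ furnished by the definition of $L_{TH}$ satisfy $r_i^H(l)=r_j^T(l)$ while no Turing coincidence occurs, so two distinct blocks degenerate at the same $r$, giving an imaginary pair together with a simple zero. If $l\in L_{TT}\setminus(L_{TH}\cup\{l_{N_1+1}^H\})$, two Turing values $r_i^T(l)=r_j^T(l)$ collide at different indices $i<j$ in $[1,N_2]$ producing two simple zeros in different blocks; whereas if $l=l_{N_1+1}^H\notin L_{TT}$ the boundary identity above forces $T_{N_1+1}=D_{N_1+1}=0$ in the single block $n=N_1+1$, producing a double zero, with no other block degenerate.

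Cases 4 and 5 will then follow by superposition of the above mechanisms. If $l=l_{N_1+1}^H\in L_{TT}$, block $N_1+1$ already contributes its double zero via the boundary identity, and the additional coincidence $r_j^T(l)=r_{N_1+1}^T(l)$ with some $j\neq N_1+1$ supplies a third, simple, zero from block $j$. If $l\in L_{TTH}$, the three indices $i\leq N_1<j<k$ with $r_i^H(l)=r_j^T(l)=r_k^T(l)$ simultaneously degenerate three different blocks, producing an imaginary pair from block $i$ and two simple zeros from blocks $j$ and $k$. The main obstacle I anticipate is not any single case but rather the need to keep a clean distinction between a double zero produced by \emph{one} block with $T_n=D_n=0$ (a nilpotent $2\times 2$ Jordan structure) and two simple zeros produced by \emph{two} different blocks whose $D_n$'s vanish simultaneously (a semisimple pair); this distinction — which hinges entirely on the boundary identity $r_n^H(l)=r_n^T(l)$ at $l=l_n^H$ coming from Proposition \ref{pro1}(4) — is precisely what separates cases 3, 4, 5 from one another and is the delicate bookkeeping point of the argument.
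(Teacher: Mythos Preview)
Your proposal is correct and follows essentially the same block-by-block counting argument the paper uses: decompose the spectrum via \eqref{characteristic}, read off the critical values from \eqref{eqTnDn}, and then check which coincidences the definitions in \eqref{eql} permit or forbid. The paper proves only case~1 explicitly and declares the remaining cases ``analogous''; you go further by spelling out the mechanism behind cases~3--5, in particular the boundary identity $r_{N_1+1}^H(l)=r_{N_1+1}^T(l)$ at $l=l_{N_1+1}^H$ (from $n^2/l^2=\bar{x}$ and Proposition~\ref{pro1}(4)), which is exactly what forces $T_{N_1+1}=D_{N_1+1}=0$ in a \emph{single} block and hence distinguishes the nilpotent double zero of case~3 (second alternative) from the semisimple pair of zeros arising from two different blocks. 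That distinction is left entirely implicit in the paper, so your write-up is in fact more complete on this point.
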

\begin{proof} Here we only give the proof of the first result, and the remainder of the arguments is analogous to it. 
	 Due to $l\notin L_{TT}\cup L_{TH}$, we have 
	 \begin{equation*}
     r_i^H(l) \neq r_{n}^{T}(l),\qquad \forall\; 0\leq i\leq N_1,\; n\in \mathbb{N},
	 \end{equation*}
	 and 
	 \begin{equation*}
	 r_{j}^{T}(l)\neq r_{n}^{T}(l),\qquad \forall\; 1\leq j \leq N_2,\; j\neq n\in\mathbb{N}.
	 \end{equation*}
That is 
     \begin{equation*}
     T_{i}(r_{i}^{H}(l),l) = 0,\; D_{i}(r_{i}^{H}(l),l) > 0,\; T_n(r_{i}^{H}(l),l) \neq 0,  \; D_{n}(r_{i}^{H}(l),l), \neq 0, \;\forall n\neq i,
     \end{equation*}
     and  
     \begin{equation*}
     T_{j}(r_{j}^{T}(l),l) \neq 0,\; D_{j}(r_{j}^{T}(l),l) = 0,\; T_n(r_{j}^{T}(l),l) \neq 0,  \; D_{n}(r_{j}^{T}(l),l), \neq 0, \;\forall n\neq j.
     \end{equation*}
Thus when $r = r_i^H(l)\,(0\leq i\leq N_1)$ (or $r = r_{j}^{T}(l)\,(1\leq j\leq N_2)$), all eigenvalues except $\pm\mathrm{i}\, \omega_i$ (or $0$) have non-zero real part.    
Which completes the proof.
\end{proof}

According to the properties of $g_1(x)$, $g_2(x)$ in {\bf{Proposition \ref{pro1}}}, we know that $\max\limits_{n\geq 0}r_n^H(l) = A_0>0$, and
\begin{equation}\label{eqR}
\max\limits_{n\geq 0}r_{n}^{T}(l) = \left\{\begin{aligned}
&0,&\quad  &\text{if}\;\,0<l\leq \sqrt{\frac{d_1}{A_0}},&\\ 
&\max\limits_{1\leq n\leq N_2}r_{n}^{T}(l):=r_*,&\quad  &\text{if}\;\,l> \sqrt{\frac{d_1}{A_0}}.&\end{aligned}\right.
\end{equation}
Combine with \eqref{eqTnDn} and the linear stability theory, we have the following result.
\begin{theorem}\label{throremC}
	For system \eqref{eqA}, assume that $d_1,d_2,r,l>0$, $1>b>0$, $a>\dfrac{(b+1)^{2}}{2(1-b)}$. Then the constant steady state $(u_{0},v_{0})$ of \eqref{eqA} is locally asymptotically stable when $r>\max\{A_{0},\,r_{*}\}$ and unstable when $r<\max\{A_{0},\,r_{*}\}$.
\end{theorem}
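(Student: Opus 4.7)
The plan is to reduce the stability question to the countable family of quadratic characteristic equations \eqref{characteristic} and then extract the critical threshold using the shape of the auxiliary functions $g_1$ and $g_2$ recorded in Proposition \ref{pro1}. By the standard linearization principle for reaction-diffusion systems with Neumann boundary conditions on $(0,l\pi)$, local asymptotic stability of $(u_0,v_0)$ is equivalent to every root of \eqref{characteristic} having strictly negative real part for all modes $n\in\mathbb{N}$, while the existence of even one mode with a root in the open right half-plane produces instability.

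For each fixed $n$, the Routh--Hurwitz criterion applied to the quadratic $\lambda^2 - T_n(r,l)\lambda + D_n(r,l) = 0$ asserts that both roots have negative real part if and only if $T_n(r,l) < 0$ and $D_n(r,l) > 0$. By \eqref{eqTnDn} these conditions translate into $r > r_n^H(l)$ and $r > r_n^T(l)$ simultaneously. Hence $(u_0,v_0)$ is locally asymptotically stable if and only if
\begin{equation*}
r \;>\; \sup_{n\geq 0}\max\bigl\{r_n^H(l),\,r_n^T(l)\bigr\}.
\end{equation*}

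The next step is to identify this supremum with $\max\{A_0,r_*\}$. Since $g_2$ is linearly decreasing with $g_2(0)=A_0$, part 5 of Proposition \ref{pro1} gives $r_n^H(l) = g_2(n^2/l^2) \leq A_0$ for every $n$, with equality at $n=0$, so $\sup_n r_n^H(l) = A_0$. For the Turing thresholds, the unimodal shape of $g_1$ on $[0,A_0/d_1]$ together with $g_1(0)=g_1(A_0/d_1)=0$ shows that $r_n^T(l)$ is positive only for the finitely many $n$ with $1\leq n\leq N_2$ (and is nonpositive once $l\leq\sqrt{d_1/A_0}$), so the supremum over $n$ is exactly $r_*$ as defined in \eqref{eqR}. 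Combining the two gives the stability condition $r>\max\{A_0,r_*\}$.

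Finally, for the instability direction I would argue contrapositively: if $r < \max\{A_0,r_*\}$ then either $r<A_0$, in which case $T_0(r,l) = A_0 - r > 0$ forces the $n=0$ quadratic (whose constant term $D_0(r,l)=-rB_0 = r(1-u_0)>0$) to have a pair of roots with positive real part; or $r<r_*$, in which case some mode $1\leq n\leq N_2$ satisfies $D_n(r,l)<0$, producing a real positive eigenvalue. Either way $(u_0,v_0)$ is unstable. The main obstacle is bookkeeping rather than analysis: one only needs to handle the boundary case $l\leq\sqrt{d_1/A_0}$ separately, where no Turing-unstable modes exist and the criterion collapses to $r>A_0$; everything else follows directly from Proposition \ref{pro1} and Routh--Hurwitz applied mode by mode.
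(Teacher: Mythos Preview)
Your approach is essentially identical to the paper's: reduce to the mode-by-mode Routh--Hurwitz criterion, identify $\sup_n r_n^H(l)=A_0$ and $\sup_n r_n^T(l)=r_*$ via Proposition~\ref{pro1}, and for instability show either $T_0>0$ or some $D_{n_*}<0$. One small slip: in the instability step you write $D_0(r,l)=-rB_0$, but in fact $D_0(r,l)=-r(A_0+B_0)$; since $A_0+B_0<0$ the conclusion $D_0>0$ still holds, so the argument is unaffected.
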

\begin{proof} If $r>\max\{A_{0},\,r_{*}\}$, we have $r>r_{n}^{H}$ and $r>r_{n}^{T}$, $\forall n\in\mathbb{N}$. Which means $T_{n}(r,0)<0$ and $D_{n}(r,0)>0$, $\forall n\in\mathbb{N}$, thus all eigenvalues of \eqref{eqcha} have strictly negative real part and $(u_0,v_0)$ is  locally asymptotically stable. 
	When $r<\max\{A_{0},\,r_{*}\}$, we have either $T_{0}(r,0)>0$ or $D_{n_{*}}(r,0)<0$ for some $n_*\in\mathbb{N}$. That means there exists at least one dimensional unstable manifold near $(u_{0},v_{0})$. Thus $(u_{0},v_{0})$ is unstable. 
\end{proof}


Form the bifurcation theory, it is easy to know that with the decrease of the birth ratio $r$, the system \eqref{eqA} will exhibit different dynamic behaviors when $r$ reaches the value of $A_0$ first or $r_*$. 
Thus, it is meaningful to study the size of $A_0$ and $r_*$.

Through some simple calculations, we can calculate that 
\begin{equation}\label{eqAg}
\begin{split}
A_0 - g_1(\hat{x}) 
&= \frac{1}{d_1}[(d_1+d_2)A_0+2d_2B_0+2d_2\sqrt{B_0(A_0+B_0)}]\\
:&= h_1(d_1,d_2,A_0,B_0)\cdot h_2(u_0,a,b,d_1,d_2)\cdot h_3(a,b,d_1,d_2),
\end{split}
\end{equation}
with 
\begin{equation*}
\begin{split}
&h_1(d_1,d_2,A_0,B_0) = \frac{A_0}{d_1[2d_{2}\sqrt{B_{0}(A_{0}+B_{0})}-(d_{1}+d_{2})A_{0}-2d_{2}B_{0}]},\\
&h_2(u_0,a,b,d_1,d_2) = \frac{u_0}{(b+u_0)[(d_1-d_2)^2a+(d_1+d_2)^2\sqrt{(a+b-1)^2+4b}]},\\
&h_3(a,b,d_1,d_2) = [(d_{1}\!+\!d_{2})^{4}\!-\!(d_{1}\!-\!d_{2})^{4}]a^{2}\!+\!2(d_{1}\!+\!d_{2})^{4}(b\!-\!1)a\!+\!(d_{1}\!+\!d_{2})^{4}(b\!+\!1)^{2}.
\end{split}
\end{equation*}

With regard to the size of $A_0$ and $g_1(\hat{x})$, we give the following conclusion.
\begin{lemma}\label{lemma3}
Assume that $d_1,d_2,r,l>0$, $1>b>0$, $a>\dfrac{(b+1)^{2}}{2(1-b)}$.
\begin{enumerate}
	\item $A_0> g_1(\hat{x})$ if the parameters also meet one of the following conditions
	\begin{enumerate}[(1)]
		\item $d_1\geq d_2$,
		\item $d_1<d_2$, $0<b \leq  b_*$, $0<a<a_{-}$ or $a>a_{+}$,
		\item $d_1<d_2$, $b_*<b<1$,
	\end{enumerate}
    \item $A_0 = g_1(\hat{x})$ if  
     $d_1<d_2$, $0<b \leq  b_*$, $a=a_-$ or $a=a_+$,
    \item $A_0< g_1(\hat{x})$ if 
     $d_1<d_2$, $0<b \leq  b_*$, $a_-<a<a_+$. 
\end{enumerate}
Here
\begin{equation}\label{eqab}
\begin{aligned}
&b_*:=\dfrac{[(d_{1}+d_{2})^{2}-\sqrt{(d_{1}+d_{2})^{4}-(d_{1}-d_{2})^{4}}]^{2}}{(d_{1}-d_{4})^{4}}<1,&\\
&a_{\pm}:=\dfrac{(1\!-\!b)(d_{1}\!+\!d_{2})^{4}\!\pm\!(d_{1}\!+\!d_{2})^{2}\sqrt{(b\!+\!1)^{2}(d_{1}\!-\!d_{2})^{4}\!-\!4b(d_{1}\!+\!d_{2})^{4}}}{(d_{1}\!+\!d_{2})^{4}\!\!-\!\!(d_{1}\!-\!d_{2})^{4}}.&
\end{aligned}
\end{equation}
\end{lemma}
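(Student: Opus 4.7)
The approach is to determine the sign of each factor in the factorization $A_0 - g_1(\hat{x}) = h_1\cdot h_2\cdot h_3$ displayed in \eqref{eqAg}. The factor $h_2$ is manifestly positive under our standing assumptions: the numerator $u_0$ and both summands of the bracket $(d_1-d_2)^2 a + (d_1+d_2)^2\sqrt{(a+b-1)^2+4b}$ in the denominator are non-negative with at least one strictly positive. Thus $\mathrm{sign}(A_0 - g_1(\hat{x})) = \mathrm{sign}(h_1)\cdot\mathrm{sign}(h_3)$, and it remains to control this product.

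I would first analyze $h_3$ as a quadratic in $a$. Its leading coefficient equals $(d_1+d_2)^4 - (d_1-d_2)^4 = 8d_1 d_2(d_1^2+d_2^2) > 0$, so the parabola opens upward. A short computation shows the discriminant in $a$ is proportional to $(d_1-d_2)^4(b+1)^2 - 4b(d_1+d_2)^4$. When $d_1 = d_2$ this is $-4b(d_1+d_2)^4 < 0$, so $h_3 > 0$ identically. When $d_1 \neq d_2$, I would view the inequality $(d_1-d_2)^4(b+1)^2 \geq 4b(d_1+d_2)^4$ as a quadratic in $b$ with positive leading coefficient, locate its two real roots, identify the smaller as the quantity $b_*$ in \eqref{eqab}, and verify that the larger root exceeds $1$. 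Thus for $b\in(0,1)$ the discriminant of $h_3$ in $a$ is non-negative exactly when $b\leq b_*$, and in that case the $a$-roots of $h_3$ are precisely $a_\pm$ from \eqref{eqab}. This yields a complete sign chart: $h_3 > 0$ when $b > b_*$ or when $a \notin [a_-,a_+]$; $h_3 = 0$ at $a = a_\pm$; $h_3 < 0$ on $(a_-,a_+)$ when $b \leq b_*$.

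Next I would analyze $h_1$. Writing $y := (d_1+d_2)A_0 + 2d_2 B_0$ and $Q := 2d_2\sqrt{B_0(A_0+B_0)}\geq 0$, one has $h_1 = A_0/[d_1(Q-y)]$, and the identity $(Q-y)(Q+y) = Q^2 - y^2 = A_0 R$ with $R := -(d_1+d_2)^2 A_0 - 4 d_1 d_2 B_0$ gives $A_0 - g_1(\hat{x}) = h_1 R$; the same rationalization that produces $h_3$ also shows $\mathrm{sign}(h_3) = \mathrm{sign}(R)$. The bound $A_0 \leq -B_0 = |B_0|$, a direct consequence of $u_0^2 + (a+b-1)u_0 - b = 0$, is the key estimate. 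When $d_1 < d_2$, the ratio $\tfrac{2d_2}{d_1+d_2} > 1 \geq \tfrac{A_0}{|B_0|}$ forces $y < 0$, so $Q - y > 0$ and $h_1 > 0$; consequently $\mathrm{sign}(A_0 - g_1(\hat{x})) = \mathrm{sign}(h_3)$, which yields cases 1(b), 1(c), 2, and 3. When $d_1 \geq d_2$, I would split on the sign of $y$: if $y\geq 0$ then $\mathrm{sign}(Q-y) = \mathrm{sign}(R)$, so $h_1 h_3 \geq 0$; if $y < 0$ then $Q - y > 0$ and the bound $A_0 < \tfrac{2d_2|B_0|}{d_1+d_2}$ yields $R > 2d_2|B_0|(d_1-d_2) \geq 0$, so both $h_1$ and $h_3$ are positive. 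Either way $A_0 > g_1(\hat{x})$, which is case 1(a).

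The main technical obstacle is verifying the factorization \eqref{eqAg} itself. Substituting $\hat{x} = \tfrac{1}{d_1}[(A_0+B_0) + \sqrt{B_0(A_0+B_0)}]$ into $g_1$ gives the first equality in \eqref{eqAg}; one must then rewrite $y + Q$ via $(y+Q)(Q-y) = A_0 R$, substitute $A_0, B_0$ in terms of $u_0$, and finally rationalize the hidden $\sqrt{(a+b-1)^2+4b}$ in $u_0$ against the conjugate bracket $(d_1-d_2)^2 a - (d_1+d_2)^2\sqrt{(a+b-1)^2+4b}$, repeatedly using $u_0^2 = (1-a-b)u_0 + b$, to expose $h_3$ as the displayed polynomial in $a, b, d_1, d_2$. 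This algebra is direct but lengthy; once done, the sign-tracking above completes the proof of all five statements.
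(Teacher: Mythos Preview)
Your proposal is correct and follows essentially the same approach as the paper: both reduce the $d_1<d_2$ cases to the sign of the quadratic $h_3$ in $a$ via the factorization \eqref{eqAg}, after observing $h_1,h_2>0$, and then analyze the discriminant of $h_3$ in $a$ as a quadratic in $b$ to locate $b_*$ and $a_\pm$. The only notable difference is in case 1(a): the paper handles $d_1\geq d_2$ directly from the first line of \eqref{eqAg}, using $(d_1+d_2)A_0\geq 2d_2A_0$ to obtain $A_0-g_1(\hat{x})\geq \tfrac{2d_2}{d_1}\bigl[(A_0+B_0)+\sqrt{B_0(A_0+B_0)}\bigr]>0$ in a single stroke, whereas you route through the full factorization and split on the sign of $y=(d_1+d_2)A_0+2d_2B_0$. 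Your route works but is longer, and you should note that when $y\geq 0$ and $R=0$ the factor $h_1$ is singular, so the conclusion $A_0>g_1(\hat{x})$ there must come from $y+Q=2Q>0$ directly rather than from $h_1h_3\geq 0$.
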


\begin{proof}
	If $d_1\geq d_2$, then $A_0-g_1(\hat{x})\geq 2\frac{d_2}{d_1}[A_0+B_0+\sqrt{B_0(A_0+B_0)}]> 0.$ This complete the proof of (1) in the first part.
	
	If $d_1<d_2$, then it is easy to show that $h_1(d_1,d_2,A_0,B_0)>0$ and $h_2(u_0,a,b,d_1,d_2)>0$. Thus, the sign of $A_0-g_1(\hat{x})$ is the same as $h_3(a,b,d_1,d_2)$. Thanks to $h_3(a,b,d_1,d_2)$ is a parabolic equation respect to $a$, we can apply the discriminant $\Delta_a = 4(d_1+d_2)^4[(d_1-d_2)^4(b+1)^2-4(d_1+d_2)^4b]$ to distinguish its sign. A short calculation revealed that $\Delta_a\leq 0$ when $b_*\leq b<1$ and $\Delta_a> 0$ if $0<b<b_*$. Moreover, $a=a_{\pm}$ are the roots of  $h_3(a,b,d_1,d_2)=0$ when $\Delta_a> 0$. 
	Then the remaining parts of the lemma follow immediately from what we have proved.
\end{proof}

When $A_{0}<g_{1}(\hat{x})$,
solving $x$ from the equation $A_{0}=g_{1}({x})$ in the interval $x\in(0,{A_{0}}/{d_{1}})$, we get two points $x_{-}<x_{+}$ with the form
\begin{equation}\label{eqx}
x_{\pm}=\frac{1}{2d_{1}d_{2}}[(d_{2}-d_{1})A_{0}\pm\sqrt{(d_{1}+d_{2})^{2}A_{0}^{2}+4d_{1}d_{2}A_{0}B_{0}}].
\end{equation}

Applying { Lemma \ref{lemma3}}, we can obtain the size of the value between $A_0$ and $r_*$.

\begin{theorem} \label{theorem2}
	For system \eqref{eqA}, 
	assume that $d_1,d_2,r,l>0$, $1>b>0$, $a>\dfrac{(b+1)^{2}}{2(1-b)}$. $r_*$, $b_*$, $a_{\pm}$, $x_{\pm}$ are defined by \eqref{eqR}, \eqref{eqab} and \eqref{eqx}, respectively. Let 
	\begin{equation}
	l_n^- := n\sqrt{\frac{1}{x_-}},  \hspace{1cm}  
	l_n^+ := n\sqrt{\frac{1}{x_+}},  \hspace{1cm} 
	\forall n\in\mathbb{N}.
	\end{equation}
	And $M_1(l),M_2(l)\in\mathbb{N}$ are two non-negative integers, such that 
	$l_{M_{1}-1}^{-}\leq l< l_{M_{1}}^{-},\;\; l_{M_{2}}^{+}<l\leq l_{M_{2}+1}^{+}.$ Then we have:
	\begin{enumerate}
		\item $A_{0}> r_{*}$ if and only if one of the following is satisfied
		\begin{enumerate}[$({\mathbf{A}}1)$]
			\item $0<l\leq \sqrt{\frac{d_1}{A_0}}$,
			\item $d_{2}\leq d_{1}$,
			\item $d_{2}>d_{1},$ $b_*< b<1$,
			\item $d_{2}>d_{1},$ $0<b\leq b_*$, $0<a< a_{-}$ or $a> a_{+}$,
			\item $d_{2}>d_{1}$, $0<b\leq b_*$, $a= a_{-}$ or $a=a_{+}$, but $l\sqrt{\hat{x}}\notin \mathbb{N}$,
			\item $d_{2}>d_{1}$, $0<b<b_*$,  $a_{-}<a<a_{+}$, $M_1(l)>M_2(l)$ and  $l\sqrt{x_{-}},l\sqrt{x_{+}}\notin \mathbb{N}.$
		\end{enumerate}
	
		\item $A_{0}=r_{*}$ if and only if one of the following is satisfied
		\begin{enumerate}
			\item [$({{\mathbf{A}}}5^{'})$] $d_{2}>d_{1}$, $0<b\leq b_*$, $a= a_{-}$ or $a=a_{+}$, and $l=n\sqrt{\frac{1}{\hat{x}}}$, $n\in\mathbb{N}$,
			\item [$({{\mathbf{A}}}6^{'})$] $d_{2}>d_{1}$, $0<b<b_*$,  $a_{-}<a<a_{+}$, $M_1(l)>M_2(l)$ and  
			$l=n\sqrt{\frac{1}{x_{-}}}$ or $l=n\sqrt{\frac{1}{x_{+}}}$, $n\in\mathbb{N}$.
		\end{enumerate}

		\item $A_{0}<r_{*}$ if and only if
     	\begin{enumerate}[$({{\mathbf{A}}}6^{''})$]
     	\item $d_{2}>d_{1}$, $0<b<b_*$,  $a_{-}<a<a_{+}$, $M_1(l)\leq M_2(l)$.  Moreover, $A_{0}< r_{n}^{T}$ only when $M_1\leq n\leq M_2.$
		\end{enumerate}
	\end{enumerate}
\end{theorem}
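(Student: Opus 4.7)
The plan is to determine the sign of $A_0-r_*$ by comparing $A_0$ with the values taken by the continuous function $g_1$ of Proposition~\ref{pro1} on the discrete set $\{n^2/l^2:1\le n\le N_2\}$, exploiting that $r_n^T(l)=g_1(n^2/l^2)$. The first step is to dispose of the degenerate case $({\mathbf A}1)$: when $0<l\le\sqrt{d_1/A_0}$ one has $N_2=0$, so $r_*=0<A_0$ by \eqref{eqR}, independent of the shape of $g_1$. For the rest of the argument I assume $l>\sqrt{d_1/A_0}$, so that $r_*$ is a genuine maximum over at least one integer $n\ge 1$.

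Next, I would organize the remaining analysis around the trichotomy for the sign of $A_0-g_1(\hat{x})$ supplied by Lemma~\ref{lemma3}. When $A_0>g_1(\hat{x})$, the uniform estimate $g_1(x)\le g_1(\hat{x})<A_0$ immediately forces $r_n^T(l)<A_0$ for every admissible $n$, hence $A_0>r_*$; matching this against Lemma~\ref{lemma3}(1) produces precisely conditions $({\mathbf A}2)$--$({\mathbf A}4)$. When $A_0=g_1(\hat{x})$, the bound $r_n^T(l)\le A_0$ still holds for every $n$, but equality at some $n$ requires $n^2/l^2=\hat{x}$, i.e.\ $l\sqrt{\hat{x}}\in\mathbb{N}$; this produces $({\mathbf A}5)$ in the strict case and $({\mathbf A}5^{'})$ in the equality case.

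The substantive regime is $A_0<g_1(\hat{x})$, corresponding to Lemma~\ref{lemma3}(3). Here I would use the two roots $x_-<\hat{x}<x_+$ of $g_1(x)=A_0$ from \eqref{eqx} and observe that $g_1(x)>A_0$ precisely on the open interval $(x_-,x_+)$, while $g_1(x)<A_0$ outside $[x_-,x_+]$. Consequently $r_n^T(l)>A_0$ iff $n\in\mathbb{N}$ lies strictly inside $(l\sqrt{x_-},l\sqrt{x_+})$, and $r_n^T(l)=A_0$ iff $n\in\{l\sqrt{x_-},l\sqrt{x_+}\}$. The bracketing conditions defining $M_1$ and $M_2$ translate as $M_1-1\le l\sqrt{x_-}<M_1$ and $M_2<l\sqrt{x_+}\le M_2+1$, so the set of integers strictly inside $(l\sqrt{x_-},l\sqrt{x_+})$ is $\{M_1,M_1+1,\dots,M_2\}$, non-empty iff $M_1\le M_2$. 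Splitting on whether $M_1\le M_2$, or $M_1>M_2$ with one of the endpoints being an integer, or $M_1>M_2$ with neither endpoint an integer, yields $({\mathbf A}6^{''})$, $({\mathbf A}6^{'})$, or $({\mathbf A}6)$ respectively, and simultaneously gives the refinement ``$A_0<r_n^T$ only when $M_1\le n\le M_2$'' in part~3.

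The main obstacle I anticipate is combinatorial bookkeeping rather than analytic difficulty. Because the sufficient conditions are arranged to partition the parameter space, the ``only if'' directions follow by elimination; but verifying this requires care to check that the Lemma~\ref{lemma3} trichotomy intersected with the integer constraints on $l\sqrt{x_\pm}$ and $l\sqrt{\hat{x}}$ covers every case exactly once, and in particular that the borderline configurations where $l\sqrt{x_\pm}$ or $l\sqrt{\hat{x}}$ happen to be integers are attributed to the correct equality regime and not double-counted in the strict inequality cases.
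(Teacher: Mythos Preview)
Your proposal is correct and follows essentially the same route as the paper: dispose of $({\mathbf A}1)$ via $r_*=0$, then invoke the trichotomy of Lemma~\ref{lemma3} on $A_0-g_1(\hat x)$, and in the regime $A_0<g_1(\hat x)$ translate the condition $n^2/l^2\in(x_-,x_+)$ into the integer bracketing by $M_1,M_2$. Your rephrasing via $n\in(l\sqrt{x_-},l\sqrt{x_+})$ is exactly equivalent to the paper's $l_n^+<l<l_n^-$, and your observation that the ``only if'' directions follow by exhaustion of a partition is the implicit logic the paper relies on as well.
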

\begin{proof} 
First of all, it is clear that $r_* = 0$ if $({{\mathbf{A}}}1)$  hold, so naturally we get $A_0>r_*$. 
Next, when parameters meets one of $({{\mathbf{A}}}2)-({{\mathbf{A}}}4)$, it follows from Lemma \ref{lemma3} that $A_0>g_1(\hat{x})\geq r_*$.	
 The condition $({{\mathbf{A}}}5)$ implies $A_{0}=g_{1}(\hat{x})$ and $r_{n}^{T}\neq g_{1}(\hat{x})$ for all $n\in\mathbb{N}$, which means $A_{0}=g_{1}(\hat{x})>r_{*}.$ But if $({{\mathbf{A}}}5^{'})$ hold, it is going to be $A_{0}=g_{1}(\hat{x})=r_{*}$, since it implies that there exists a $n_*\in\mathbb{N}$ such that $\frac{n_{*}^{2}}{l^{2}}=\hat{x}$ and  $r_{n_{*}}^{T}=g_{1}(\hat{x})=r_{*}$.

Finally, under the condition of  $d_{2}>d_{1}$, $0<b<d$,  $a_{-}<a<a_{+}$ , benefit from Lemma \ref{lemma3} we get $r_{n}^{T}>A_{0}$ for some $n\in\mathbb{N}$ only when $l_n^+<l<l_n^-.$
When $M_1(l)>M_2(l)$, we obtain that 
\begin{equation*}
\left\{
\begin{aligned}
&l\geq l_{M_1-1}^-\geq l_{n}^-, &&\forall n\leq M_2\leq M_1-1<M_1\\
&l\leq l_{M_2+1}^+\leq l_{n}^+, &&\forall n>M_2
\end{aligned}
\right.
\end{equation*}
thus $r_{n}^{T}\leq A_{0}$ for any $n\in \mathbb{N}$. Moreover, if $l\sqrt{x_{-}}$ and $l\sqrt{x_{+}}\notin \mathbb{N}$ ($i.e.$, the condition $({{\mathbf{A}}}6)$ is satisfied), it means $A_{0}\neq r_{n}^{T}$ for all $n\in\mathbb{N}$, thus  $A_{0}>r_{*}$ is proved. But if $l\sqrt{x_{-}}\in \mathbb{N}$ or $l\sqrt{x_{+}}\in \mathbb{N}$ ($i.e.$, $({{\mathbf{A}}}6^{'})$ is satisfied), it is easy to get $r_{n_{*}}^{T}=A_{0}$, thus $A_{0}=r_{*}$ is proved. When $M_1(l)\leq M_2(l)$, ($i.e.$, $({{\mathbf{A}}}6^{''})$ is satisfied), we have
\begin{equation*}
\left\{
\begin{aligned}
&l\geq l_{M_1-1}^-\geq l_n^-, &\quad&\forall n<M_1,\\
&l_{n}^+\leq l_{M_2}^+<l<l_{M_1}^-\leq l_{n}^-,&&\forall M_1\leq n\leq, M_2\\
&l\leq l_{M_2+1}^+\leq l_{n}^+, &&\forall n>M_2.
\end{aligned}
\right.
\end{equation*}
Thus $A_{0}<r_{*}$ and $A_{0}< r_{n}^{T}$ if and only if $M_1\leq n\leq M_2$. The proof is completed.
\end{proof}

So far, we have analyzed the distribution of eigenvalues with zero real part in Theorem \ref{theorem1} and the size of $A_0$, $r_*$ in Theorem \ref{theorem2}. Based on these conclusions, we obtain the following bifurcation theorems.

\begin{theorem}[{\bf Hopf bifurcation}]\label{theoremH}
For system \eqref{eqA}, 
assume that $d_1,d_2,r,l>0$, $1>b>0$, $a>\dfrac{(b+1)^{2}}{2(1-b)}$.
If $l\notin L_{TT}\cup L_{TH}$, then the system \eqref{eqA} undergoes a Hopf bifurcation when  $r=r_{n}^{H}(l)$ $(0\leq n\leq N_{1})$. The bifurcating periodic solution is spatially homogeneous if it bifurcate from $r=r_{0}^{H}(l)=A_{0}$ and spatially inhomogeneous if it bifurcate from $r=r_{n}^{H}$ and $1\leq n\leq N_{1}$. Furthermore,  the bifurcation solutions can be stable only when $a,b,d_1,d_2,l$ also meet one of $({{\mathbf{A}}}1)$-$({{\mathbf{A}}}6)$ and $r=r_{0}^{H}(l)=A_0$. (i.e., if $a,b,d_1,d_2,l$ meet one of $({{\mathbf{A}}}1)$-$({{\mathbf{A}}}6)$  and $r=r_{n}^{H}(l)$ $(1\leq n\leq N_{1})$, or $a,b,d_1,d_2,l$ meet $({{\mathbf{A}}}6^{''})$ and $r=r_{n}^{H}(l)$ $(0\leq n\leq N_{1)}$ the bifurcation solutions are  unstable. )
\end{theorem}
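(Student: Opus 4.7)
My plan is to invoke the classical Hopf bifurcation theorem for reaction–diffusion equations under Neumann data, since almost all of the eigenvalue bookkeeping has already been carried out upstream. When $l\notin L_{TT}\cup L_{TH}$, Theorem \ref{theorem1}(1) states that at $r=r_n^H(l)$ the characteristic equation \eqref{eqcha} has exactly one pair of simple pure imaginary eigenvalues $\pm\mathrm{i}\omega_n$ with $\omega_n^2=D_n(r_n^H(l),l)>0$, and every other eigenvalue has nonzero real part. So I would (i) verify transversality, (ii) read off the spatial structure of the critical eigenspace, and (iii) separately analyse when the bifurcating branch can be asymptotically stable.

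For transversality, I would view $\lambda$ as a smooth function of $r$ near $r_n^H(l)$ via the implicit function theorem applied to the scalar quadratic \eqref{characteristic}. Differentiating in $r$ and using $\partial_r T_n(r,l)=-1$ together with $T_n(r_n^H,l)=0$, one obtains $\mathrm{Re}\,\lambda'(r_n^H(l))=-\tfrac{1}{2}\neq 0$, so the critical pair crosses the imaginary axis transversally and a local family of periodic orbits bifurcates. The spatial structure is then immediate: the eigenfunctions of $-\Delta$ on the rescaled interval with Neumann data are $\cos(nx/l)$, so the critical mode is a constant vector in $\mathbb{C}^2$ times $\cos(nx/l)$; when $n=0$ this factor is $x$-independent and the bifurcating orbit is spatially homogeneous, while for $1\leq n\leq N_1$ the nonconstant factor $\cos(nx/l)$ forces the orbit to be spatially inhomogeneous.

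For the stability part I would use that a bifurcating Hopf cycle can be orbitally asymptotically stable only when the equilibrium at $r=r_n^H(l)$ has, apart from the critical pair, no eigenvalue with nonnegative real part, i.e.\ $T_j(r_n^H,l)<0$ and $D_j(r_n^H,l)>0$ for every $j\neq n$. Proposition \ref{pro1}(5) gives $r_n^H(l)\leq A_0=r_0^H(l)$ with strict inequality for $n\geq 1$, so for any $n\geq 1$ the quantity $T_0(r_n^H,l)=A_0-r_n^H(l)$ is strictly positive; this produces a real positive eigenvalue and rules out stability of every inhomogeneous Hopf branch. When $n=0$, the remaining requirement reduces to $D_j(A_0,l)>0$ for all $j\geq 1$, equivalently $A_0>r_j^T(l)$ for every admissible $j$, i.e.\ $A_0>r_*$. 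By Theorem \ref{theorem2} this is exactly the union of parameter regimes $({{\mathbf{A}}}1)$--$({{\mathbf{A}}}6)$, whereas under $({{\mathbf{A}}}6'')$ one has $r_*>A_0$, hence $D_{n_*}(A_0,l)<0$ for some $n_*$ with $M_1\leq n_*\leq M_2$, and the homogeneous Hopf branch is unstable as well.

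The main obstacle I anticipate is not any single analytic step but the bookkeeping in item (iii): matching each subcase of Theorem \ref{theorem2} with the sign pattern of $\{D_j(A_0,l)\}_j$, and checking that for $1\leq n\leq N_1$ the obstruction $T_0(r_n^H,l)>0$ really persists in every parameter regime, so that no spatially inhomogeneous branch slips through as stable. Once these sign tables are in place, both the Hopf hypotheses and the stability dichotomy follow by direct inspection, with no Lyapunov coefficient computation needed since only the \emph{necessary} side of stability is asserted.
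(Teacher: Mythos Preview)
Your proposal is correct and follows essentially the same route as the paper: transversality via $\partial_r T_n=-1$ (equivalently $\mathrm{Re}\,\lambda'(r_n^H)=-\tfrac12$), the spatial structure from the Neumann eigenfunctions, and instability of the inhomogeneous branches from $T_0(r_n^H,l)=A_0-r_n^H>0$, with the $({{\mathbf{A}}}6'')$ case handled by $D_{n_*}(A_0,l)<0$. One small wording fix: $T_0>0$ with $D_0>0$ need not give a \emph{real} positive eigenvalue, only an eigenvalue with positive real part; this does not affect your conclusion.
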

\begin{proof}
	Since $l\notin L_{TT}\cup L_{TH}$, then the parameters can not meet the condition $({{\mathbf{A}}}5^{'})$ or $({{\mathbf{A}}}6^{'})$. Due to the fact that $\dfrac{\partial T_n(r,l)}{\partial r} = -1<0,$
	the existence of the Hopf bifurcation at $r=r_{n}^{H}(l)$ $(0\leq n\leq N_{1})$ is a direct consequent of Theorem \ref{theorem1}.
	Further assume that the parameters meet one of $({{\mathbf{A}}}1)$-$({{\mathbf{A}}}6)$, then we have  $T_0(r_n^H(l),l) >0$ when $1\leq n\leq N_{1}$, since $r_0^H(l)>r_n^H(l)$. That means there exist at least one eigenvalue of \eqref{eqcha} have positive real part when $r=r_{n}^{H}$ and $1\leq n\leq N_{1}$, thus the periodic solutions which bifurcate from $r=r_{n}^{H}$ $(1\leq n\leq N_{1})$  are unstable.
	If the parameters meet $({{\mathbf{A}}}6^{''})$, the statement can be proved in the same way as above.
\end{proof}
\begin{theorem}[{\bf Turing bifurcation}]\label{theoremS}
	For system \eqref{eqA}, 
	assume that $d_1,d_2,r>0$,  $1>b>0$, $a>\dfrac{(b+1)^{2}}{2(1-b)}$.
	If $l>\sqrt{\frac{d_1}{A_0}}$ and $l\notin L_{TT}\cup L_{TH}$, then the system \eqref{eqA} undergoes a steady state bifurcation when $r=r_{n}^{T}(l)$ $(1\leq n\leq N_{2})$. Moreover, the bifurcation solutions can be stable only when $a,b,d_1,d_2,l$ also meet $({{\mathbf{A}}}6^{''})$ and $r=r_*$. 
\end{theorem}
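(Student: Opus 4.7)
The plan is to split the argument into an existence part and a stability part. For the existence of the steady state bifurcation at $r=r_n^T(l)$ with $1\le n\le N_2$, I would apply case~1 of Theorem~\ref{theorem1}: since $l\notin L_{TT}\cup L_{TH}$, at $r=r_n^T(l)$ the characteristic equation \eqref{eqcha} has a \emph{simple} zero eigenvalue coming from the $n$-th mode, and all other eigenvalues are off the imaginary axis. Next I would verify the transversality condition
\begin{equation*}
\frac{\partial D_n(r,l)}{\partial r}\Big|_{r=r_n^T(l)}=d_1\tfrac{n^2}{l^2}-A_0-B_0,
\end{equation*}
which is strictly positive because $A_0+B_0<0$ (this is exactly what makes $r_n^T(l)$ well defined and positive in \eqref{eqTnDn}). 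Combined with the simplicity of the zero eigenvalue, this is a standard steady-state (pitchfork/transcritical) bifurcation at $r=r_n^T(l)$.

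For the stability claim I would argue by necessity. A nonconstant steady state bifurcating from $r=r_n^T(l)$ can only be stable if, at the bifurcation point, every eigenvalue of the linearization other than the bifurcating $0$ has negative real part. Since the eigenvalues of \eqref{eqcha} come in the two parametric families $T_m(r,l),D_m(r,l)$, this forces
\begin{equation*}
T_m(r_n^T(l),l)<0,\quad D_m(r_n^T(l),l)>0\quad\text{for all }m\neq n.
\end{equation*}
Using the characterizations in \eqref{eqTnDn}, these inequalities translate precisely into $r_n^T(l)>r_m^H(l)$ for all $m\geq 0$ and $r_n^T(l)>r_m^T(l)$ for all $m\neq n$, i.e. $r_n^T(l)\geq A_0$ and $r_n^T(l)=r_*$. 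Because $l\notin L_{TH}$ rules out equality $r_n^T(l)=r_m^H(l)$, the inequality $r_n^T(l)>A_0$ must be strict, i.e.\ $A_0<r_*$.

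Finally, I would invoke Theorem~\ref{theorem2} to identify when $A_0<r_*$ is possible: among the six cases listed there, only $({\mathbf{A}}6'')$ yields $A_0<r_*$, and it further pins down the admissible indices by $M_1\le n\le M_2$. Thus the parameters must satisfy $({\mathbf{A}}6'')$ and one must take $r=r_*$; otherwise some other $D_m$ or $T_0$ is already of the wrong sign at the bifurcation point, forcing the new branch to be unstable. The main obstacle I anticipate is purely bookkeeping: one has to confirm that excluding $L_{TT}\cup L_{TH}$ really does eliminate all ``hidden'' zero or imaginary eigenvalues at $r=r_n^T(l)$, and that Theorem~\ref{theorem2} genuinely lists every configuration of $(A_0,r_*)$; once those are in hand the stability conclusion follows directly from the sign analysis above.
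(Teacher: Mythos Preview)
Your proposal is correct and follows essentially the same route the paper takes: the paper does not write out a separate proof for this theorem but states that it ``follows in a similar manner'' to Theorem~\ref{theoremH}, and your argument is precisely the natural adaptation of that proof---Theorem~\ref{theorem1}(1) for the simple zero eigenvalue, the transversality check $\partial_r D_n=d_1 n^2/l^2-A_0-B_0>0$, and then the necessity argument for stability combined with Theorem~\ref{theorem2} to single out $({\mathbf{A}}6'')$ and $r=r_*$.
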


\begin{theorem}[{\bf Turing-Hopf bifurcation}]\label{theoremTH}
For system \eqref{eqA}, 
assume that $d_1,$ $d_2,$ $r,$ $l>0$,  $1>b>0$, $a>\dfrac{(b+1)^{2}}{2(1-b)}$. If $l\in L_{TH}$, then system \eqref{eqA}  undergoes a Turing-Hopf bifurcation at  $r=r_{i}^{H}(l)=r_{j}^{T}(l)$ $(0\leq i\leq N_{1}< j\leq N_{2})$. Moreover, the bifurcation solutions can be stable only when $a,b,d_1,d_2,l$ also meet one of $({{\mathbf{A}}}5^{'})$-$({{\mathbf{A}}}6^{'})$ and $r=A_0=r_*$.
\end{theorem}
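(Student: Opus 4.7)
The plan is to verify the two standard ingredients for a codimension-two Turing--Hopf bifurcation at $r=r_i^H(l)=r_j^T(l)$, namely the eigenvalue configuration and the transversality of the crossing in the parameter $r$, and then to read off the stability clause directly from the spectral picture.

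First, I would invoke Theorem~\ref{theorem1}(2), working in the generic regime $l\in L_{TH}\setminus L_{TT}$ (the higher--codimension case $l\in L_{TTH}\subset L_{TH}$ is excluded from genericity and is instead governed by Theorem~\ref{theorem1}(5)). The hypothesis $l\in L_{TH}$ supplies integers $0\le i\le N_1<j\le N_2$ with $r_i^H(l)=r_j^T(l)$, and Theorem~\ref{theorem1}(2) identifies the spectrum of (\ref{eqcha}) at this parameter value as a simple conjugate pair $\pm\mathrm{i}\omega_i$ from the $n=i$ block (where $T_i=0$, $D_i>0$) together with a simple zero from the $n=j$ block (where $D_j=0$, $T_j\ne 0$), all remaining roots of (\ref{characteristic}) lying off the imaginary axis. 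This is precisely the Turing--Hopf eigenvalue configuration.

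Second, I would verify transversality. Differentiating the polynomials in (\ref{characteristic}) gives
\[
\frac{\partial T_n}{\partial r}=-1,\qquad \frac{\partial D_n}{\partial r}=d_1\frac{n^2}{l^2}-A_0-B_0,
\]
so the conjugate Hopf pair crosses the imaginary axis with nonzero speed $-1$; for the Turing mode $n=j$,
\[
\left.\frac{\partial D_j}{\partial r}\right|_{r=r_j^T(l)}=d_1\frac{j^2}{l^2}-(A_0+B_0)>0,
\]
using $A_0+B_0<0$ in the parameter range under consideration and $j\ge 1$. Combined with Step~1 and the centre--manifold reduction of \cite{Lin1992Centre} followed by the normal--form machinery of \cite{Faria2000Normal}, these two nondegeneracy conditions establish the asserted Turing--Hopf bifurcation.

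Third, for the stability clause I would argue purely from the spectrum: any small-amplitude bifurcating solution can be asymptotically stable only if every non-critical root of (\ref{characteristic}) has negative real part at the bifurcation point. From (\ref{eqTnDn}), demanding $T_n(r,l)<0$ for all $n\ne i$ forces $r\ge r_n^H(l)$ for every $n$, which by the $g_2$-monotonicity in Proposition~\ref{pro1} forces $r_i^H(l)=\max_{n}r_n^H(l)=A_0$ (hence $i=0$); similarly $D_n(r,l)>0$ for all $n\ne j$ forces $r_j^T(l)=\max_n r_n^T(l)=r_*$. Thus stability requires $r=A_0=r_*$, and Theorem~\ref{theorem2} then shows that the coincidence $A_0=r_*$ compatible with the additional constraint $l\in L_{TH}$ can occur exactly under one of $(\mathbf{A}5^{'})$--$(\mathbf{A}6^{'})$.

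The main obstacle I anticipate is keeping the non-degeneracy book-keeping clean, specifically isolating the generic stratum $l\in L_{TH}\setminus L_{TT}$ and confirming the sign $A_0+B_0<0$ that the Turing transversality rests on; once the sign is pinned down from $B_0=u_0-1<0$ and the defining relation $(u_0-1)(u_0+b)+au_0=0$ in the regime $a>(b+1)^2/(2(1-b))$, the rest is algebra. A secondary subtlety is that the stability assertion here is only a \emph{necessary} condition: which side of the bifurcation curve in $(r,l)$--space actually carries an asymptotically stable branch, and what its spatio-temporal structure is, must be left to the normal-form computation of Section~3.
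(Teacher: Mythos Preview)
Your proposal is correct and follows essentially the same route as the paper, which does not give a separate proof of Theorem~\ref{theoremTH} but states that it ``follows in a similar manner of the proof in Theorem~\ref{theoremH}''. You have in fact supplied more detail than the paper: the paper's model argument (for Theorem~\ref{theoremH}) checks only the Hopf transversality $\partial T_n/\partial r=-1$ and then reads stability off from Theorem~\ref{theorem2} by exhibiting a mode with positive real part whenever the necessary condition fails, whereas you additionally verify the Turing transversality $\partial D_j/\partial r>0$ and spell out why stability forces $i=0$ and $r_j^T=r_*$ before invoking Theorem~\ref{theorem2}. Your worry about $A_0+B_0<0$ is easily discharged (it holds for all admissible parameters, as the paper uses in the proof of Lemma~2.2), and your restriction to the generic stratum $l\in L_{TH}\setminus L_{TT}$ matches the hypothesis of Theorem~\ref{theorem1}(2) that you rely on.
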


\begin{theorem}[{\bf Turing-Turing bifurcation}]\label{theoremBT}
	For system \eqref{eqA}, assume that $d_1,$ $d_2,$ $r,$ $l>0$,  $1>b>0$, $a>\dfrac{(b+1)^{2}}{2(1-b)}$. If $l\in L_{TT}\setminus (L_{TH}\cup \{l_{N_1+1}^H\})$ (or $l=l_{N_1+1}^H\notin L_{TT}$), then system \eqref{eqA} undergoes a Turing-Turing bifurcation when  $r=r_{i}^{T}(l)=r_{j}^{T}(l)$ with  $1\leq i<j\leq N_2$ (or $r = r_{N_1+1}^{H}(l)=r_{N_1+1}^{T}(l)$).   Moreover, the bifurcation solutions can be stable only when $a,b,d_1,d_2,l$ also meet $({{\mathbf{A}}}6^{''})$, and $r=r_{i}^{T}(l)=r_{j}^{T}(l)=r_*$ $(1\leq i<j\leq N_2)$.
\end{theorem}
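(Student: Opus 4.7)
The argument splits into an existence part and a stability part. For existence I appeal directly to part 3 of Theorem \ref{theorem1}, which says that at $r = r_i^T(l) = r_j^T(l)$ (or at $r = r_{N_1+1}^H(l) = r_{N_1+1}^T(l)$ in the exceptional case $l = l_{N_1+1}^H$) the characteristic equation \eqref{eqcha} has exactly two zero eigenvalues while every other eigenvalue has non-zero real part. Transversality in the bifurcation parameter $r$ follows from differentiating the relevant factor in \eqref{characteristic}: by \eqref{eqTnDn}, $\partial_r D_n(r,l) = d_1 n^2/l^2 - A_0 - B_0$, and this quantity is strictly positive at the critical modes since it appears as the denominator in the defining formula of $r_n^T(l)$ and must carry the sign that keeps $r_n^T(l)>0$. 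Hence the two zero eigenvalues cross the imaginary axis with non-zero speed as $r$ varies through the critical value, confirming a codimension-two Turing--Turing bifurcation.

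For stability I examine the sign of $\operatorname{Re}\lambda$ at every non-critical mode at the critical value of $r$. A necessary condition for linear stability of the bifurcating patterns is that these modes contribute only eigenvalues with strictly negative real part, which by \eqref{characteristic} demands $T_n(r,l)<0$ for every $n\geq 0$ and $D_n(r,l)>0$ for every $n\notin\{i,j\}$. From \eqref{eqTnDn}, the first batch forces $r>\max_{n\geq 0} r_n^H(l) = r_0^H(l) = A_0$, and the second forces $r \geq \max_{n\geq 1} r_n^T(l) = r_*$. Since we are sitting at $r=r_i^T(l)=r_j^T(l)$, combining these gives $r_i^T=r_j^T=r_*$ together with the strict inequality $r_* > A_0$.

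By Theorem \ref{theorem2}, the strict inequality $A_0<r_*$ can hold only under case $({{\mathbf{A}}}6^{''})$: cases $({{\mathbf{A}}}5^{'})$ and $({{\mathbf{A}}}6^{'})$ produce only the equality $A_0=r_*$ and, more importantly, both of them encode a coincidence between a Hopf curve and a Turing curve, i.e.\ $l\in L_{TH}$, which is ruled out by the standing hypothesis. This yields the claimed necessary condition. I expect the main obstacle to be the isolated exceptional case $l=l_{N_1+1}^H\notin L_{TT}$: there the double zero stems from a single wavenumber $N_1+1$, so transversality and the sign bookkeeping have to be extracted from one quadratic factor of \eqref{characteristic} rather than from two independent factors; fortunately, one checks directly that $r_{N_1+1}^H(l)=A_0-(d_1+d_2)\bar{x}<A_0$, so the mode-$0$ eigenvalue is already positive at this bifurcation point and no stable bifurcating pattern survives -- consistent with the theorem restricting the stability conclusion to $1\leq i<j\leq N_2$. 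Apart from this case, the proof is a systematic reading of the sign patterns classified in Theorems \ref{theorem1} and \ref{theorem2}.
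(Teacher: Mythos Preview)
Your proposal is correct and follows essentially the same route as the paper. The paper does not give a separate proof of this theorem; it simply remarks that Theorems \ref{theoremS}--\ref{theoremTZ} are proved ``in a similar manner'' to Theorem \ref{theoremH}, i.e.\ existence via Theorem \ref{theorem1}, transversality from differentiating the relevant coefficient in \eqref{characteristic} with respect to $r$, and (in)stability by exhibiting a non-critical mode with an eigenvalue of positive real part. Your write-up carries out exactly this scheme, with $\partial_r D_n$ in place of the $\partial_r T_n$ used for the Hopf case, and your handling of the exceptional case $l=l_{N_1+1}^{H}$ (noting $r_{N_1+1}^{H}(l)<A_0$ so that mode $0$ already destabilizes) is the right observation to explain why the stability clause of the theorem is confined to the two-mode situation $1\le i<j\le N_2$.

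One small remark: your aside that ``$({\mathbf A}5')$ and $({\mathbf A}6')$ encode $l\in L_{TH}$'' is extra and not needed for the argument. Once you have established that stability forces the \emph{strict} inequality $r_*>A_0$, Theorem \ref{theorem2} immediately singles out $({\mathbf A}6'')$; the equality cases $({\mathbf A}5')$, $({\mathbf A}6')$ are already excluded by strictness and need no separate discussion.
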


\begin{theorem}[{\bf Hopf-double-Turing bifurcation}]\label{theoremHDZ}
	For system \eqref{eqA}, assume that $d_1,$ $d_2,$ $r,$ $l>0$,  $1>b>0$, $a>\dfrac{(b+1)^{2}}{2(1-b)}$. If $l\in L_{TTH}$, then the system \eqref{eqA} undergoes a Hopf-double-zero bifurcation at  $r=r_{i}^{H}(l)=r_{j}^{T}(l)=r_{k}^{T}(l)$ $(0\leq i\leq N_{1} < j<k\leq N_{2})$. Moreover, the bifurcation solutions can be stable only when $a,b,d_1,d_2,l$ also meet one of $({{\mathbf{A}}}5^{'})$-$({{\mathbf{A}}}6^{'})$, and $r=r_{0}^{H}(l)=r_{j}^{T}(l)=r_{k}^{T}(l)=A_0$ $(N_1+1\leq j<k\leq N_2)$.
\end{theorem}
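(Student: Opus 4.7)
The plan is to follow the template of the proofs of Theorems \ref{theoremTH} and \ref{theoremBT}. First, I would invoke the definition of $L_{TTH}$ in \eqref{eql}: choosing $l\in L_{TTH}$ and $r = r_i^H(l) = r_j^T(l) = r_k^T(l)$ for some $0\le i\le N_1 < j < k \le N_2$ simultaneously forces $T_i(r,l)=0$ and $D_j(r,l) = D_k(r,l) = 0$. By Theorem \ref{theorem1}(5) the characteristic equation \eqref{eqcha} then has exactly one pair of pure imaginary eigenvalues $\pm \mathrm{i}\omega_i$ together with a double zero eigenvalue, while every other eigenvalue lies off the imaginary axis. The transversality conditions $\partial T_i/\partial r = -1 \neq 0$ and $\partial D_n/\partial r = d_1 n^2/l^2 - A_0 - B_0 > 0$ for $n=j,k$ (using $A_0+B_0<0$) confirm that the three neutral modes cross the imaginary axis with non-zero speed, so a genuine codimension-three Hopf-double-zero singularity occurs at this triple coincidence.

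For the stability claim I would proceed by extracting necessary conditions. Denote $r^* := r_i^H(l) = r_j^T(l) = r_k^T(l)$. Stability of any solution bifurcating from the coexistence state requires every eigenvalue of \eqref{eqcha} other than $\pm\mathrm{i}\omega_i$ and the double zero to have strictly negative real part, so in particular $T_0(r^*,l)\le 0$ and $D_m(r^*,l)\ge 0$ for all $m\neq j,k$. Since $T_0(r,l)=A_0-r$, the first inequality gives $r^*\ge A_0$; combined with $r^* = r_i^H(l) \le r_0^H(l) = A_0$ from Proposition \ref{pro1}(5), this forces $i=0$ and $r^* = A_0$. The second inequality together with $r^* = r_j^T(l) \le r_*$ forces $r^* = r_*$. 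Hence stability demands the simultaneous equalities $A_0 = r_* = r^* = r_0^H(l) = r_j^T(l) = r_k^T(l)$ with $N_1+1 \le j < k \le N_2$.

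Finally I would invoke Theorem \ref{theorem2}: the equality $A_0 = r_*$ occurs only under case $(\mathbf{A}5')$ or $(\mathbf{A}6')$, which produces exactly the parameter list appearing in the statement. The main obstacle is the compatibility/realisability check at the end: one must verify that two \emph{distinct} integer modes $j\neq k$ with $r_j^T(l)=r_k^T(l)=A_0$ can genuinely coexist with the zero-mode Hopf. Under $(\mathbf{A}5')$ the graph of $g_1$ touches the level $A_0$ at the single point $\hat{x}$, so only one Turing wavenumber can achieve equality; hence the Hopf-double-Turing coincidence is non-trivial only under $(\mathbf{A}6')$, where the two distinct roots $x_\pm$ of $g_1(x)=A_0$ produce two simultaneous critical wavenumbers whenever both $l\sqrt{x_-}$ and $l\sqrt{x_+}$ are positive integers. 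Once this realisability is checked, the remaining steps (transversality and the strict negativity of all non-critical eigenvalues at $r^*$) are entirely parallel to the arguments used for the Turing-Hopf and Turing-Turing cases and require no new ideas.
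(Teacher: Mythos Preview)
Your proposal is correct and follows essentially the same approach as the paper, which gives no separate proof for this theorem but simply states that the argument is parallel to that of Theorem~\ref{theoremH}. Your added realisability observation---that under $(\mathbf{A}5')$ the curve $g_1$ meets the level $A_0$ only at the single point $\hat{x}$, so two distinct Turing modes with $r_j^T(l)=r_k^T(l)=A_0$ can arise only under $(\mathbf{A}6')$---is a valid refinement that the paper does not make explicit.
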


\begin{theorem}[{\bf Triple-Turing bifurcation}]\label{theoremTZ}
	For system \eqref{eqA}, assume that $d_1,$ $d_2,$ $r,$ $l>0$,  $1>b>0$, $a>\dfrac{(b+1)^{2}}{2(1-b)}$. If $l=l_{N_1+1}^H\in L_{TT}$, then system \eqref{eqA} undergoes a triple-Turing bifurcation at  $r=r_{N_{1}+1}^{H}(l)=r_{N_{1}+1}^{T}(l)$. Moreover, the bifurcation solutions are always unstable. 
\end{theorem}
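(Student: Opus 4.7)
The existence claim is immediate: by Theorem~\ref{theorem1}(4), the hypothesis $l = l_{N_1+1}^H \in L_{TT}$ forces \eqref{eqcha} to have exactly three zero eigenvalues at $r = r_{N_1+1}^H(l) = r_{N_1+1}^T(l)$. Two of them come from the Fourier mode $n = N_1+1$ (where both $T_{N_1+1}$ and $D_{N_1+1}$ vanish) and the third comes from a second Turing mode $n = k \ne N_1+1$ with $1 \le k \le N_2$ supplied by the $L_{TT}$ condition. So the bifurcation is of triple-Turing type and the only substantive claim left is the instability of the bifurcating solutions.

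My plan is to exhibit two eigenvalues of \eqref{eqcha} with strictly positive real part, living entirely in the Fourier mode $n = 0$, which is orthogonal to the three central modes. First, at the bifurcation value
\[
r = r_{N_1+1}^H(l) = A_0 - (d_1+d_2)\frac{(N_1+1)^2}{l^2},
\]
I will observe that $N_1+1 \ge 1$ together with $d_1,d_2,l>0$ forces $r < A_0$, hence $T_0(r,l) = A_0 - r > 0$. Second, a short direct computation from $A_0 = u_0(1-b-2u_0)/(b+u_0)$ and $B_0 = u_0 - 1$ yields $A_0 + B_0 = -(u_0^2 + b)/(b + u_0) < 0$, so $D_0(r,l) = -r(A_0 + B_0) > 0$. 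Consequently the mode-zero quadratic $\lambda^2 - T_0\lambda + D_0 = 0$ has two roots with positive sum and positive product, and hence both roots have strictly positive real part.

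The instability conclusion then follows by persistence of hyperbolicity. The two unstable directions of mode $n = 0$ are linearly independent of the three central directions, which live in modes $n = N_1+1$ and $n = k$ (both $\ge 1$), so no branch of solutions on the three-dimensional centre manifold can absorb or cancel this instability. Because $T_0$ and $D_0$ depend continuously on $r$ and are strictly positive at the bifurcation, they remain strictly positive in a full neighbourhood of $r = r_{N_1+1}^H(l)$; hence every bifurcating equilibrium, periodic orbit, or quasi-periodic orbit inherits two linearised eigenvalues with real part bounded away from zero, and is therefore unstable.

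The only delicate point in the plan is the sign of $A_0+B_0$, but this reduces to a one-line algebraic identity and is not a genuine obstacle. In contrast to Theorems~\ref{theoremTH}--\ref{theoremHDZ}, no centre-manifold reduction or normal-form computation is required here, because the instability is already decided at the linear level by a single Fourier mode that is disjoint from the bifurcating modes.
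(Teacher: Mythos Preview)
Your argument is correct and is precisely the approach the paper intends: the paper does not prove Theorem~\ref{theoremTZ} separately but states that its proof ``follows in a similar manner of the proof in Theorem~\ref{theoremH}'', namely by observing that the bifurcation value $r = r_{N_1+1}^H(l)$ lies strictly below $r_0^H(l)=A_0$, so that $T_0>0$ and the $n=0$ block already supplies an eigenvalue with positive real part. Your additional verification that $D_0 = -r(A_0+B_0)>0$ via the identity $A_0+B_0 = -(u_0^2+b)/(b+u_0)$ is a clean strengthening (giving two unstable directions rather than one) but is not needed for the instability conclusion.
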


	 Theorem \ref{theoremH} - Theorem \ref{theoremTZ} are intended solely as a brief summary and not as a rigorous development. The strict proof of Theorem \ref{theoremS} - \ref{theoremTZ} follows in a similar manner of the proof in  Theorem \ref{theoremH}. In the above bifurcation theorems,  the stability of some bifurcation solutions can not be determined by the current analysis. 
	 We list them at here.
To give back all the current analysis results to the system \eqref{eqA}, we have the following conclusion.
\begin{remark}
	The two species of system \eqref{eqA} will gradually tend to be uniform in the spatial domain with the increase of the birth ratio $r$.
	The size of the spatial domain is sufficiently large ($l>\sqrt{\frac{d_1}{A_0}}$) and the diffusion coefficient satisfies $d_2>d_1$ are two necessary conditions for these two species to exhibit the spatially inhomogeneous patterns. 
\end{remark}

\section{Spatio-temporal patterns in Holling-Tanner system with a Turing-Hopf singularity }

In this section, we will give a more detailed study of the Holling-Tanner system \eqref{eqA} with the parameters $(r,l)$ near the Turing-Hopf bifurcation point. 
Assume that the parameters $a,b,d_1,d_2$ are satisfy one of the conditions  $({{\mathbf{A}}}5^{'})$-$({{\mathbf{A}}}6^{'})$. Let $l_*\in L_{TH}$ such that $r_*=r_{n_*}^{T}(l_*)=A_0$ for some $n_*\in\mathbb{N}$. It is obvious that  $(r_*,l_*)$ is a Turing-Hopf bifurcation point, which satisfies the hypothesis $(\textbf{H}1)$, $(\textbf{H}3)$ and $(\textbf{H}4)$ in \cite{An2017}.  

We  adopt the frame and notations of \cite{An2017}. 
 Choosing $$\mathcal{BC}:=\{\psi:[-1,0]\rightarrow X_{\mathbb{C}} : \psi~\mathrm{is~ continuous~ on}~[-1,0),~\exists \lim_{\theta\rightarrow0^{-}}\psi(\theta)\in X_{\mathbb{C}}\}$$ as the phase space.
 Taking the transformation $(\alpha_1,\alpha_2) = (r-r_*,l-l_*)$
 and rewriting \eqref{eqB} into a abstract ordinary differential equation in ${\mathcal{BC}}$,
\begin{equation}\label{eqC}
\frac{\mathrm{d}}{\mathrm{d}t}U^t =A U^t
+X_0[\frac{1}{2!}F_0^{(2)}(\alpha_1,\alpha_2,U)+\frac{1}{3!}F_0^{(3)}(\alpha_1,\alpha_2,U)+\cdots],
\end{equation}
with $ A\varphi=\dot{{\varphi}}+X_{0}[D(r_*,l_*)\Delta\varphi(0)+L(r_*,l_*)(\varphi)-\dot{{\varphi}}(0)]$ and
\begin{equation*}
\begin{split}
&F_0^{(2)}(\alpha_1,\alpha_2,U)= 2\{\frac{\partial}{\partial r}[D(r_*,l_*)\Delta+L(r_*,l_*)]\alpha_1 U+\frac{\partial}{\partial l}[D(r_*,l_*)\Delta+L(r_*,l_*)]\alpha_2U\}
\\&\hspace{2.8cm}+\frac{\partial^2}{\partial \hat{u}^2}F(0,0)\hat{u}^2+2\frac{\partial^2}{\partial \hat{u}\hat{v}}F(0,0)\hat{u}\hat{v}+\frac{\partial^2}{\partial \hat{v}^2}F(0,0)\hat{v}^2,\\
&F_0^{(3)}(0,0,U) =  \frac{\partial^3}{\partial \hat{u}^3}F(0,0)\hat{u}^3+3\frac{\partial^3}{\partial \hat{u}^2\hat{v}}F(0,0)\hat{u}^2\hat{v}+3\frac{\partial^3}{\partial \hat{u}\hat{v}^2}\hat{u}\hat{v}^2+\frac{\partial^3}{\partial \hat{v}^3}\hat{v}^3.
\end{split}
\end{equation*}
According to the direct sum decomposition of ${\mathcal{BC}}$ about the characteristic subspaces of $\{\pm\mathrm{i}\omega,0 \}$, we decompose $U^t\in {\mathcal{BC}}$ into 
$$U^t(\theta)=\phi_{1}(\theta)z_1\beta_{n_{1}}+\bar{\phi}_{1}(\theta)\bar{z}_1\beta_{n_{1}}+\phi_{2}(\theta)z_2\beta_{n_{2}}+y(\theta).$$
There are a series of coordinate transformations $(z,y)\rightarrow (z+\frac{1}{j!}U_2^1(z),y+\frac{1}{j!}U_2^2(z))$ as shown in \cite{An2017}, that make the system \eqref{eqC}  homeomorphic to a new system with $y(\theta)=0$ is a local central manifold of it. Moreover, the solutions of \eqref{eqC} are  homeomorphic to the solutions of the new system restrict on central manifold that have the form as
\begin{equation}
W(t)=\phi_{1}(0)z_1\beta_{n_{1}}+\bar{\phi}_{1}(0)\bar{z}_1\beta_{n_{1}}+\phi_{2}(0)z_2\beta_{n_{2}}.
\end{equation}
Here 
\begin{equation}\label{eqNF}
\begin{aligned}
\dot{z_{1}}=&~~~~i\omega_{0}z_{1}\!+\!\frac{1}{2}f_{\alpha_{1}z_1}^{11}\alpha_{1}z_{1}\!+\!\frac{1}{2}f_{\alpha_{2}z_1}^{11}\alpha_{2}z_{1}\!+\!\frac{1}{6}g_{210}^{11} z_1^2{\bar{z}_1} \!+\!\frac{1}{6} g_{102}^{11}z_1z_{2}^2+O(4),\\
\dot{\bar{z}}_1=&-i\omega_{0}{\bar{z}_1}\!+\!\frac{1}{2}\overline{f_{\alpha_{1}z_1}^{11}}\alpha_{1}{\bar{z}_1}\!+\!\frac{1}{2}\overline{f_{\alpha_{2}z_1}^{11}}\alpha_{2}{\bar{z}_1}\!+\!\frac{1}{6}\overline{g_{210}^{11}}z_1{\bar{z}_1^2}\!+\!\frac{1}{6}\overline{g_{102}^{11}}{\bar{z}_1}z_{2}^2+O(4) ,\\
\dot{z_{2}}=&~~~~~~~~~~~~~\frac{1}{2}f_{\alpha_{1}z_{2}}^{13}\alpha_{1}z_{2}\!+\!\frac{1}{2}f_{\alpha_{2}z_{2}}^{13}\alpha_{2}z_{2}\!+\!\frac{1}{6}g_{111}^{13}z_1{\bar{z}_1}z_{2}\!+\!\frac{1}{6}g_{003}^{13}z_{2}^3+O(4),
\end{aligned}
\end{equation}
with the coefficients can be obtained by the computer program, which is fully depends on the formulas that proposed in \cite[Section 3]{An2017}. Moreover, equation \eqref{eqNF} is called a normal form for \eqref{eqC} (or \eqref{eqA}) relative to $\{\pm\mathrm{i}\omega,0 \}$.

For an example, take $a=0.6018$, $b = 0.0077$, $d_1 = 0.4000$, $d_2 = 19.3700$ in \eqref{eqA}. The bifurcation diagram of the nontrivial equilibrium point $(u_0,v_0) = (0.4093, 0.4093)$  in $r-l$ plane is shown in Figure \ref{figBP}. The dotted lines and the solid line represent the steady state bifurcation curves (i.e., $r=r_{n}^{T}(l)$) and the Hopf bifurcation curve (i.e., $r=r_n^H(l)>r_{n}^{T}(l)$), respectively. TH1-TH3 are the Truing-Hopf bifurcation points, which are the intersections of the solid line and the dotted lines. TT1-TT3 are the Turing-Turing bifurcation points, which are the intersections of the dotted lines. 
\begin{figure}[htbp]
	\centering                                                             
	\includegraphics[scale=0.55]{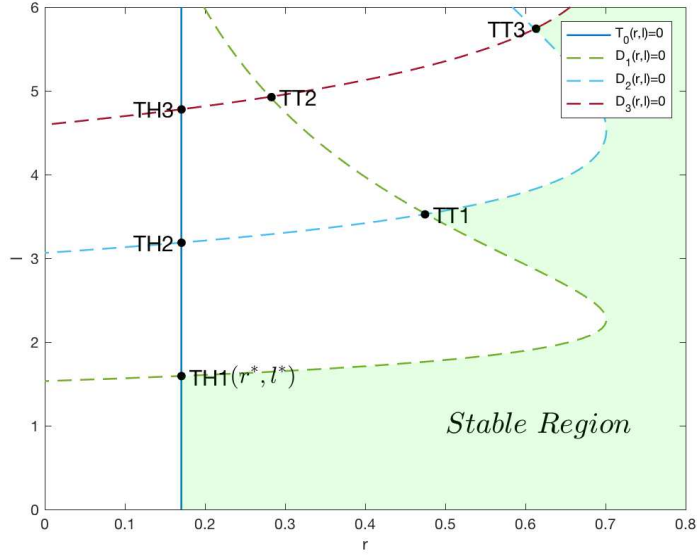}            
	\caption{Bifurcation sets with parameters in $r-l$ plane} 
	\label{figBP}                                                    
\end{figure}

In the following, we are going to work on the detailed dynamics of \eqref{eqA} with the parameters $(\alpha,l)$ near the Turing-Hopf bifurcation point TH1. Here $l_* = 1.593334\in L_{TH}$, and
\begin{equation*}
\begin{aligned}
A_0 &= r_0^H(l_*) = 0.170468,\qquad r_n^H < 0 \,(n\geq 1),\\
r_* &= r_1^{T}(l_*) = 0.170468,\qquad\; r_{n}^{T} < 0 \,(n\geq 2).
\end{aligned}
\end{equation*}
Furthermore, the characteristic equation \eqref{eqcha} has one pair of pure imaginary roots $\pm  0.267646\mathrm{i}$ and a zero root, when $(r,l)=(r_*,l_*)$. 
Based on the algorithm in \cite[Section 3]{An2017}, the  normal forms \eqref{eqNF} of \eqref{eqA} with the Turing-Hopf singularity $(\alpha_*,l_*)$ can be obtained directly, and the coefficients in \eqref{eqNF} are  
\begin{equation*}
\begin{aligned}
&f_{\alpha_1z_1}^{11} = -1.0000 + 1.5701\mathrm{i}, \hspace{0.8cm} f_{\alpha_2z_1}^{11} = 0, \\
&f_{\alpha_1z_2}^{13}= -0.1484,\hspace{2.48cm}  f_{\alpha_2z_2}^{13} = 0.3645,\\
&g_{210}^{11} = - 0.3026 - 4.8696\mathrm{i},\hspace{1cm} g_{102}^{11} = 1.3640 - 10.1736\mathrm{i},\\
& g_{111}^{3} = -1.3543, \hspace{2.65cm}g_{003}^{13} = 0.1241.
\end{aligned}
\end{equation*}

Taking the cylindrical coordinate transformation
\begin{equation*}
\begin{aligned}
z_{1}=R\cos\theta+\mathrm{i}R\sin\theta,\quad
{\bar{z}_1}=R\cos\Theta-\mathrm{i}R\sin\Theta,\quad
z_{2}=V,
\end{aligned}
\end{equation*}
and the re-scaling
\begin{equation*}\label{change}
{\rho}=\sqrt{\frac{|\mathrm{Re}(g_{210}^{11})|}{6}}R,\qquad {v}=\sqrt{\frac{|g_{003}^{13}|}{6}}V.
\end{equation*}
We get the equivalent planar system of \eqref{eqNF}
\begin{equation}\label{eqplanar}
\begin{aligned}
&\frac{\mathrm{d}\rho}{\mathrm{d}{t}}=-\rho[\epsilon_{1}(\alpha_1,\alpha_2) + \rho^{2}+b_0 v^{2}],\\
&\frac{\mathrm{d}v}{\mathrm{d}{t}}=-v[\epsilon_{2}(\alpha_1,\alpha_2) + c_0 \rho^{2} + d_0 v^{2}].
\end{aligned}
\end{equation}
Here $b_0 =-10.9918$, $c_0 =4.4751$, $d_0 =-1$ and
\begin{equation*}
\begin{aligned}
&\epsilon_{1}(\alpha_1,\alpha_2)=0.5000\,\alpha_1,\\
&\epsilon_{2}(\alpha_1,\alpha_2)=0.0742\,\alpha_1-0.1822\,\alpha_2.\\ 
\end{aligned}
\end{equation*}

There are four positive equilibrium points in the planar system \eqref{eqplanar}
\begin{equation*}
\begin{aligned}
&E_1 = (0,0),  \hspace{4.6cm} \mathrm{for~all}\;\; \epsilon_1, \epsilon_2,\\
&E_2 = (\sqrt{-\epsilon_1},0),\hspace{3.85cm} \mathrm{for}\; \epsilon_1<0,\\
&E_3 = (0,\sqrt{{\epsilon_2}}),\hspace{4.1cm} \mathrm{for}\;\; {\epsilon_2}>0,\\
&E_4 = (\sqrt{\frac{b_0\epsilon_2-d_0\epsilon_1}{d_0-b_0c_0}},\sqrt{\frac{c_0\epsilon_1-\epsilon_2}{d_0-b_0c_0}}),\hspace{0.75cm}\mathrm{for}\;{b_0\epsilon_2-d_0\epsilon_1},{c_0\epsilon_1-\epsilon_2}>0.
\end{aligned}
\end{equation*}
The linearized equation at each equilibrium point is
\begin{equation*}
\frac{\mathrm{d}}{\mathrm{d}{t}}\left(\begin{aligned}\rho\\v\end{aligned}\right)=-\left(\begin{array}{cc}
\epsilon_1+3\rho_i+b_0v_i & 2b_0\rho_iv_i\\
2c_0\rho_iv_i & \epsilon_2+c_0\rho_i^2+3d_0v_i^2
\end{array}\right)\left(\begin{aligned}\rho\\v\end{aligned}\right)
\end{equation*}
with $(\rho_i,v_i)=E_i$ $(i=1,2,3,4)$. By analyzing the corresponding characteristic equations, the bifurcation set in $(\alpha_1,\alpha_2)$ plane is obtained and shown in Figure \ref{figBC}.  
There are seven bifurcation lines $L_1-L_7$ that divide the $(\alpha_1,\alpha_2)$ plane into seven regions $D_1-D_7$, the detailed dynamics of \eqref{eqplanar} in each region have been shown in figure \ref{figphase}. The deeper details please refer to the Case \uppercase\expandafter{\romannumeral 7}a in \cite[Chap.7]{Guckenheimer1983Nonlinear}. 
According to \cite[Section 4]{An2017}, we list the corresponding relationship between the solution of the plane system \eqref{eqplanar} and the original system \eqref{eqA} in Table 1. 
\begin{table}[htbp]
\captionsetup{singlelinecheck=off,skip=1pt,font=bf}
		\centering 
		{\parbox{.85\textwidth}\caption{}}	
	\begin{tabular}{c|c}
		\hline
		\bf{Planar system} & \bf{Holling-Tanner system \eqref{eqA}}\\
		\hline
		$E_1$         & Constant steady state $(u_0,v_0)$\\
		$E_2$         & Spatially homogeneous periodic solution\\
		$E_3$         & Non-constant steady state\\
		$E_4$         & Spatially non-homogeneous periodic solution\\
		Periodic solution & Spatially non-homogeneous quasi-periodic solution\\
		\hline
	\end{tabular}
\end{table}
\begin{figure}[htbp]
	\centering
	\includegraphics[scale=0.55]{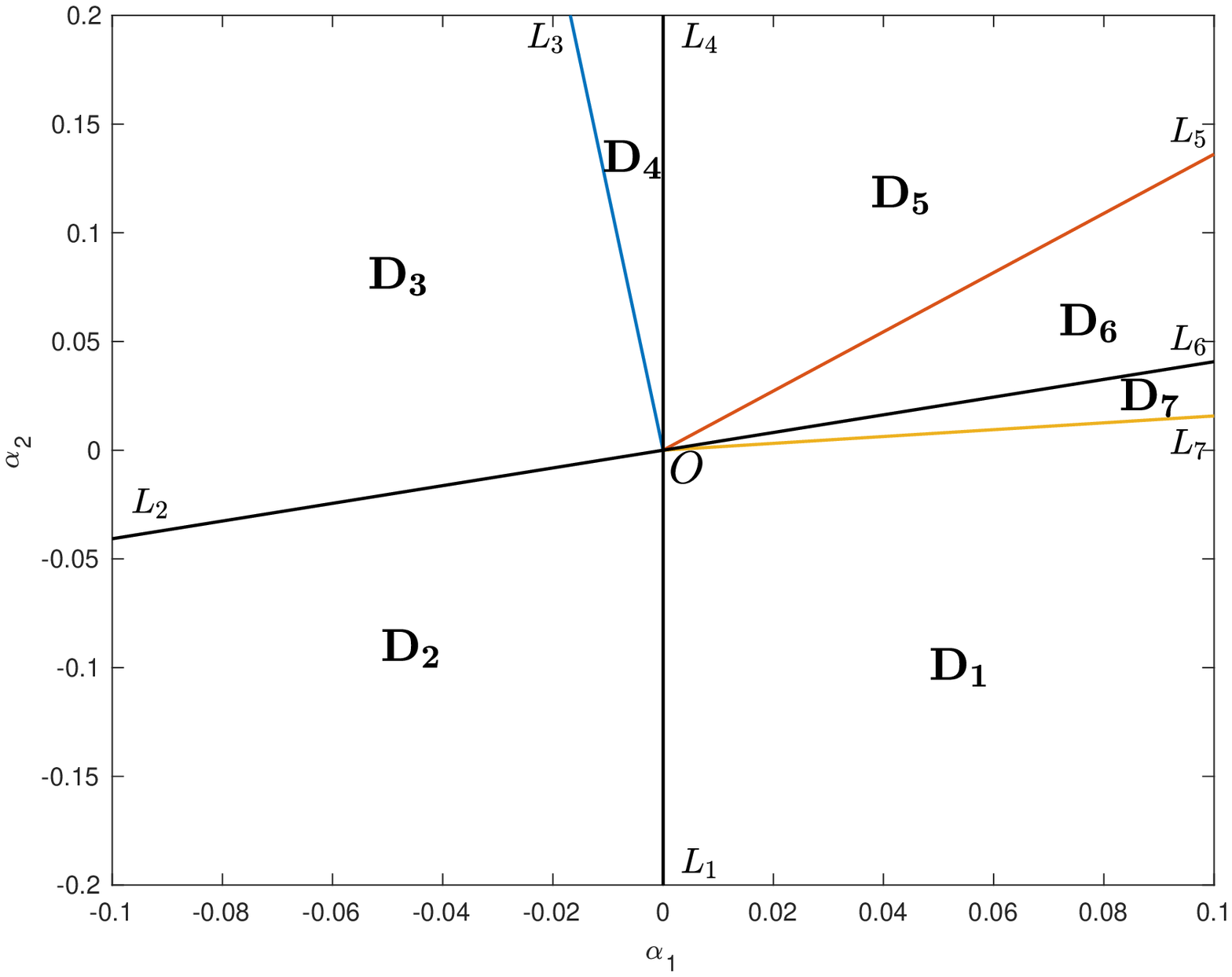} 
	\begin{multicols}{2}
		\begin{small}
			\begin{itemize}
				\item $L_1: \alpha_1=0, \; \alpha_2<0;$
				\item $L_2: \alpha_1<0, \; \alpha_2 = 0.4072 \alpha_1;$
				\item $L_3: \alpha_1<0, \; \alpha_2 = -11.8738 \alpha_1;$\\
				${(\mathit{i.e.,}\; c_0\epsilon_{1}(\alpha_1,\alpha_2)-\epsilon_{2}(\alpha_1,\alpha_2)=0)}$
				\item $L_4: \alpha_1=0, \; \alpha_2>0;$
				\item $L_5: \alpha_1>0, \; \alpha_2 =1.3614\alpha_1;$
				\item $L_6: \alpha_1>0, \; \alpha_2 = 0.4072\alpha_1;$
				\item $L_7: \alpha_1>0, \; \alpha_2 = 0.1575\alpha_1;$\\
				${(\mathit{i.e.,}\; b_0\epsilon_{2}(\alpha_1,\alpha_2)-d_0\epsilon_{1}(\alpha_1,\alpha_2)=0)}$
			\end{itemize}
		\end{small}
	\end{multicols}
	\caption{Bifurcation set in $(\alpha_1,\alpha_2)$ plane}
	\label{figBC}
\end{figure}
\begin{figure}[htbp]
	\centering                                      
	\includegraphics[height = 0.5 \linewidth, width = 1\linewidth]{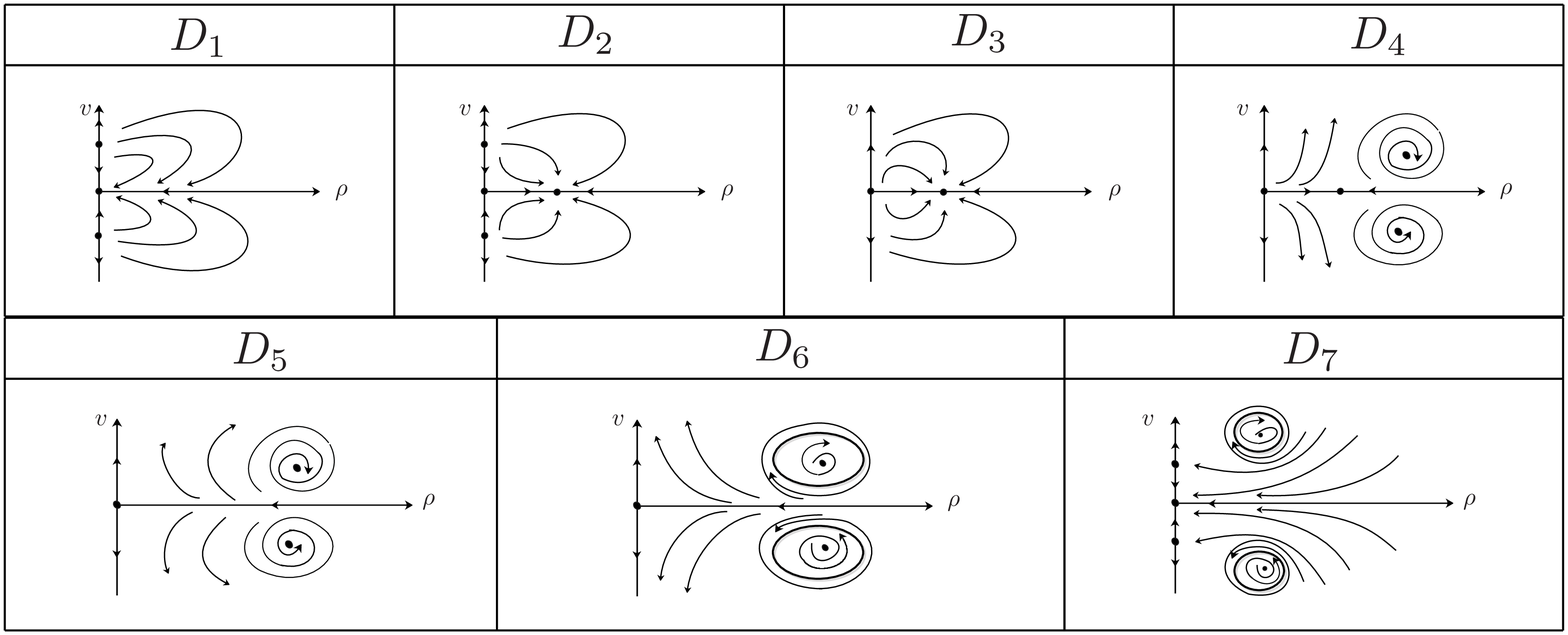}  
	\caption{Phase portraits in $D_1$-$D_7$}   
\end{figure}\label{figphase}
  
For the original Holling-Tanner system \eqref{eqA}, the detailed kinetic properties can be described as follows. When the parameters $(\alpha_1,\alpha_2)$ belongs to $D_1$, there are one stable constant steady state $(u_0,v_0)$ and two unstable non-constant steady state coexist in the system \eqref{eqA}. In Figure \ref{fig1}, we give a simulation with parameters in $D_1$, and the solution eventually stabilize to $(u_0,v_0)$.
\begin{figure}[htbp] 
	\centering                           
	\subfigure[$u(t,x)$]{       
		\begin{minipage}{0.48\linewidth} 
			\centering                                      
			\includegraphics[scale=0.33]{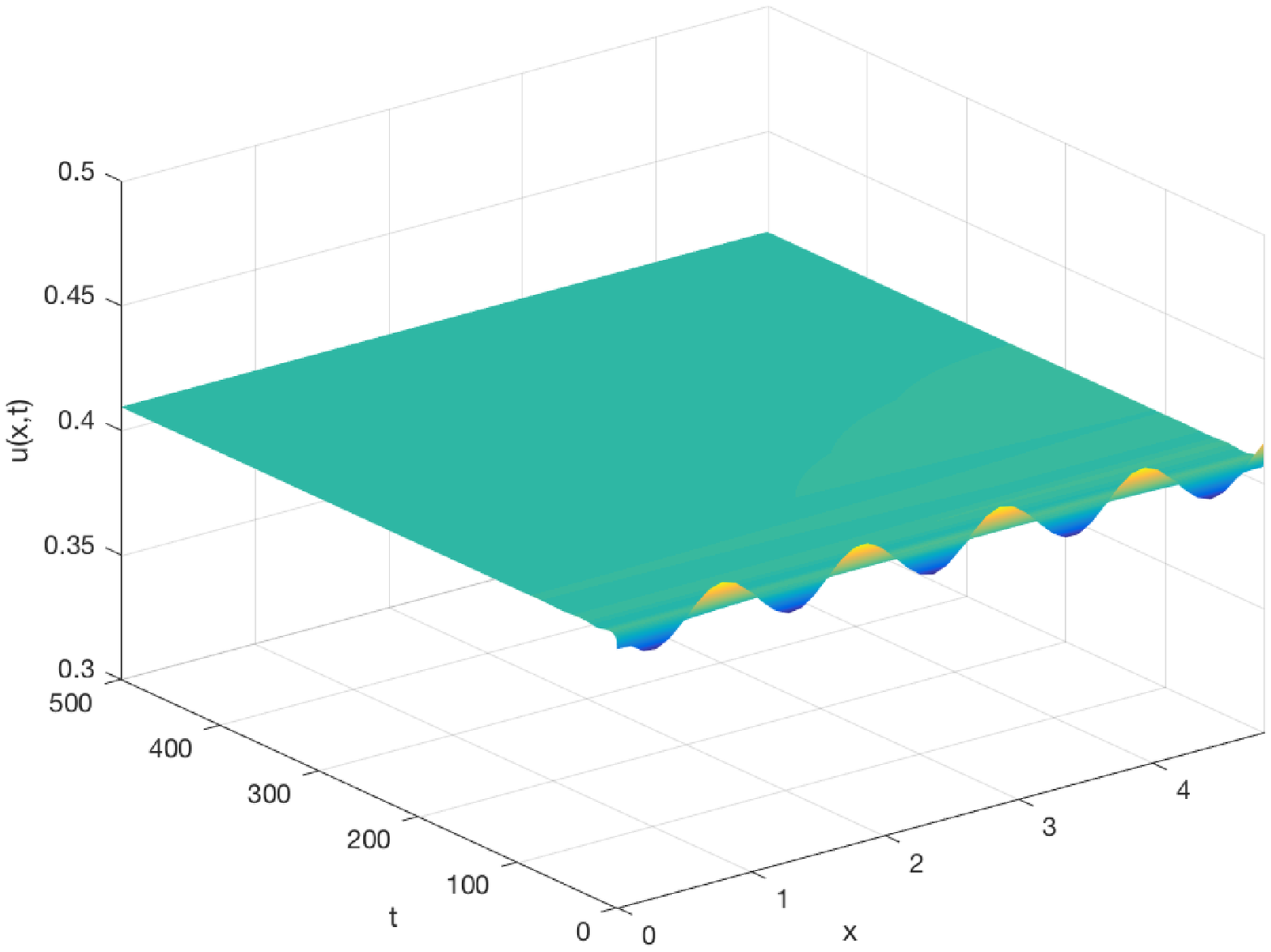}            
	\end{minipage}}
	\subfigure[$v(t,x)$]{                  
		\begin{minipage}{0.48\linewidth} 
			\centering                                     
			\includegraphics[scale=0.33]{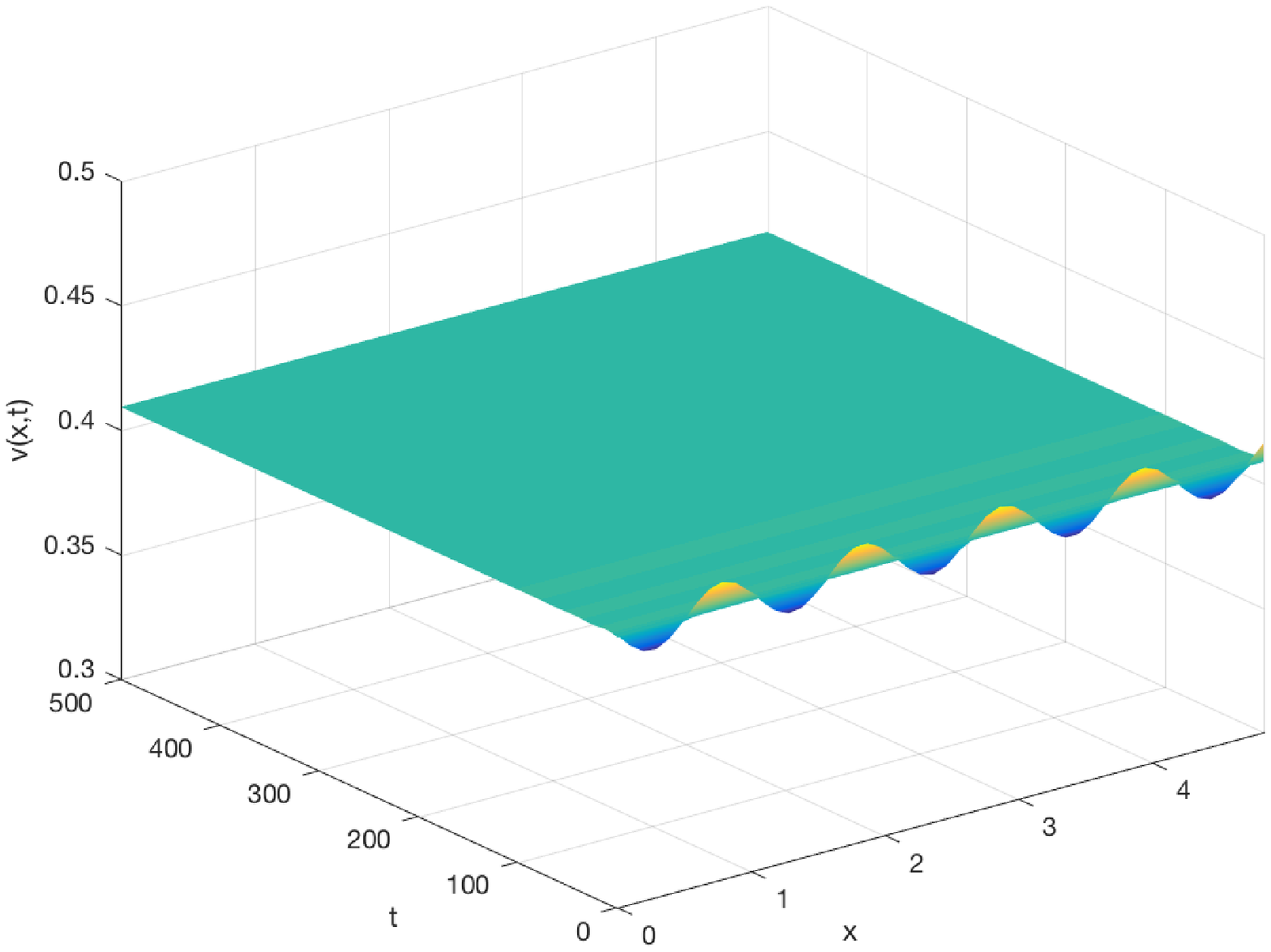}                
		\end{minipage}
	}\caption{A stable constant steady state $(u_0,v_0)$ in {$\mathbf D_1$}, with $(\alpha_1,\alpha_2)=(0.0373,-0.0543)$ and the initial functions are $u_0(x)=v_0(x)=u_0+0.01\sin 6x$.} 
	\label{fig1}                                                    
\end{figure}

As the parameters pass through the pitchfork line $L_1$ of $(u_0,v_0)$ from $D_1$ to $D_2$, a stable spatially homogeneous periodic solution is generated  and the equilibrium loses its stability at the same time. In Figure \ref{fig2}, $(\alpha_1,\alpha_2)$ are chosen in $D_2$, and the stable spatially homogeneous periodic solution are shown.
\begin{figure}[htbp]
	\centering                           
	\subfigure[$u(t,x)$]{       
		\begin{minipage}{0.47\linewidth} 
			\centering                                      
			\includegraphics[scale=0.3]{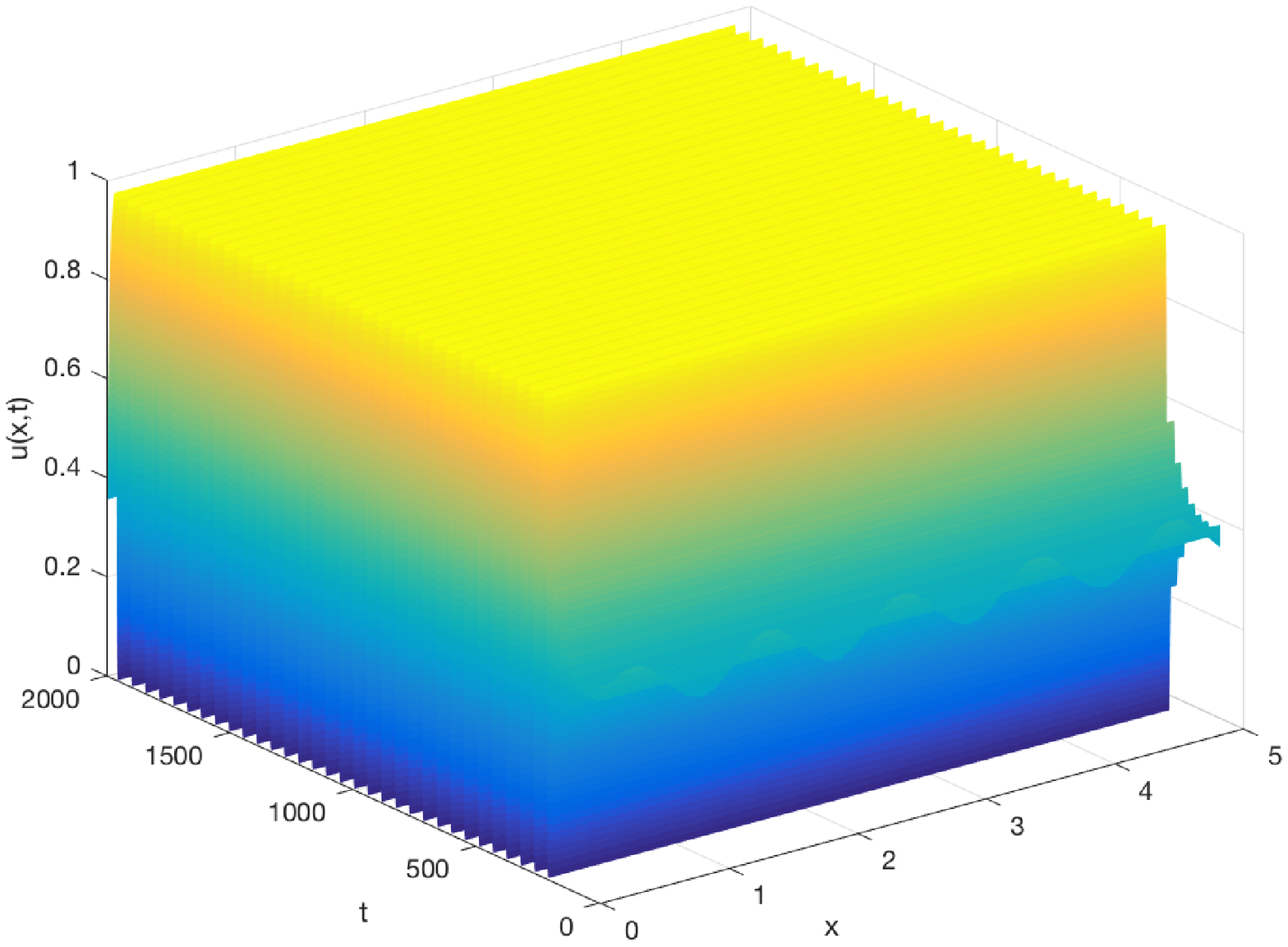}            
	\end{minipage}}
	\subfigure[$v(t,x)$]{                  
		\begin{minipage}{0.47\linewidth} 
			\centering                                     
			\includegraphics[scale=0.3]{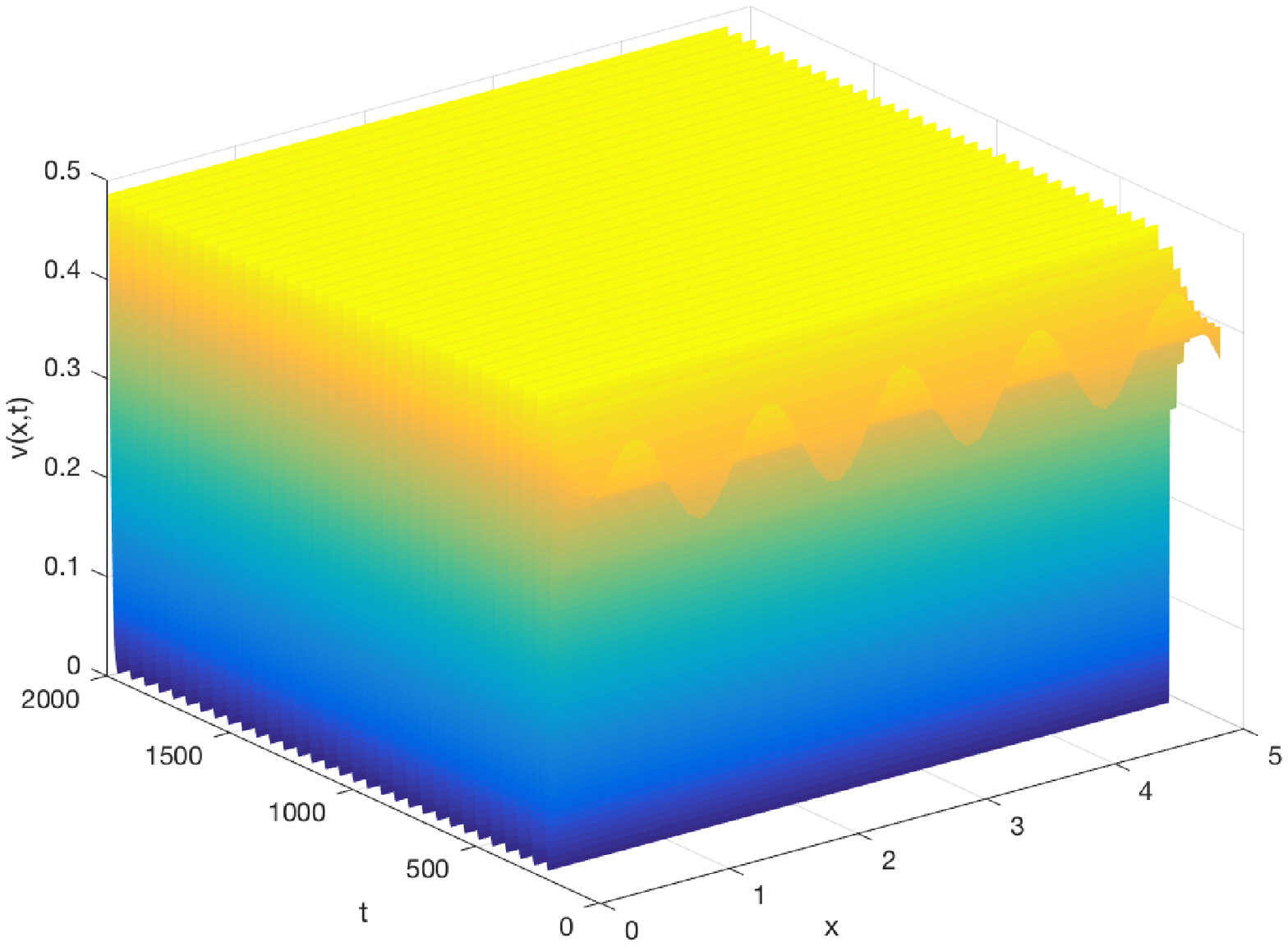}                
	\end{minipage}}
	\caption{A stable spatially homogeneous periodic solution in {$\mathbf D_2$},  with $(\alpha_1,\alpha_2)=(-0.0344,-0.0578)$ and  initial functions are $u_0(x)=v_0(x)=u_0+0.05\sin6x$.} 
	\label{fig2}                                                    
\end{figure}

 In $D_3$, the two unstable non-constant steady states  disappeared due to the existence of another pitchfork bifurcation curve $L_2$ of $(u_0,v_0)$. The  spatially homogeneous periodic solution is still a stable attractor of \eqref{eqA}. We simulate the dynamics with $(\alpha_1,\alpha_2)\in D_3$ in Figure \ref{fig3}.
 \begin{figure}[htbp]
 	\centering                           
 	\subfigure[$u(t,x)$]{       
 		\begin{minipage}{0.48\linewidth} 
 			\centering                                      
 			\includegraphics[scale=0.3]{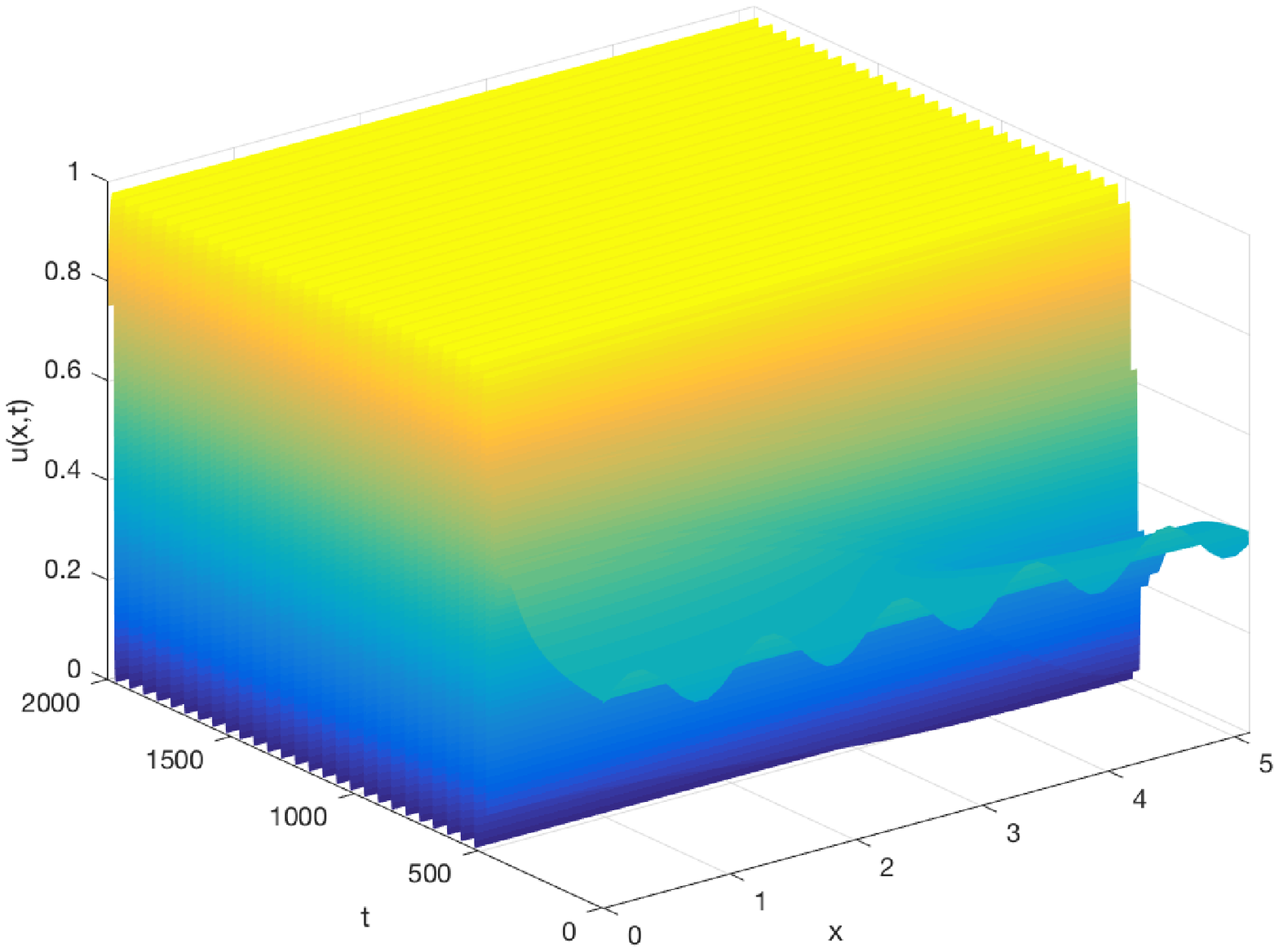}            
 	\end{minipage}}
 	\subfigure[$v(t,x)$]{                  
 		\begin{minipage}{0.48\linewidth} 
 			\centering                                     
 			\includegraphics[scale=0.3]{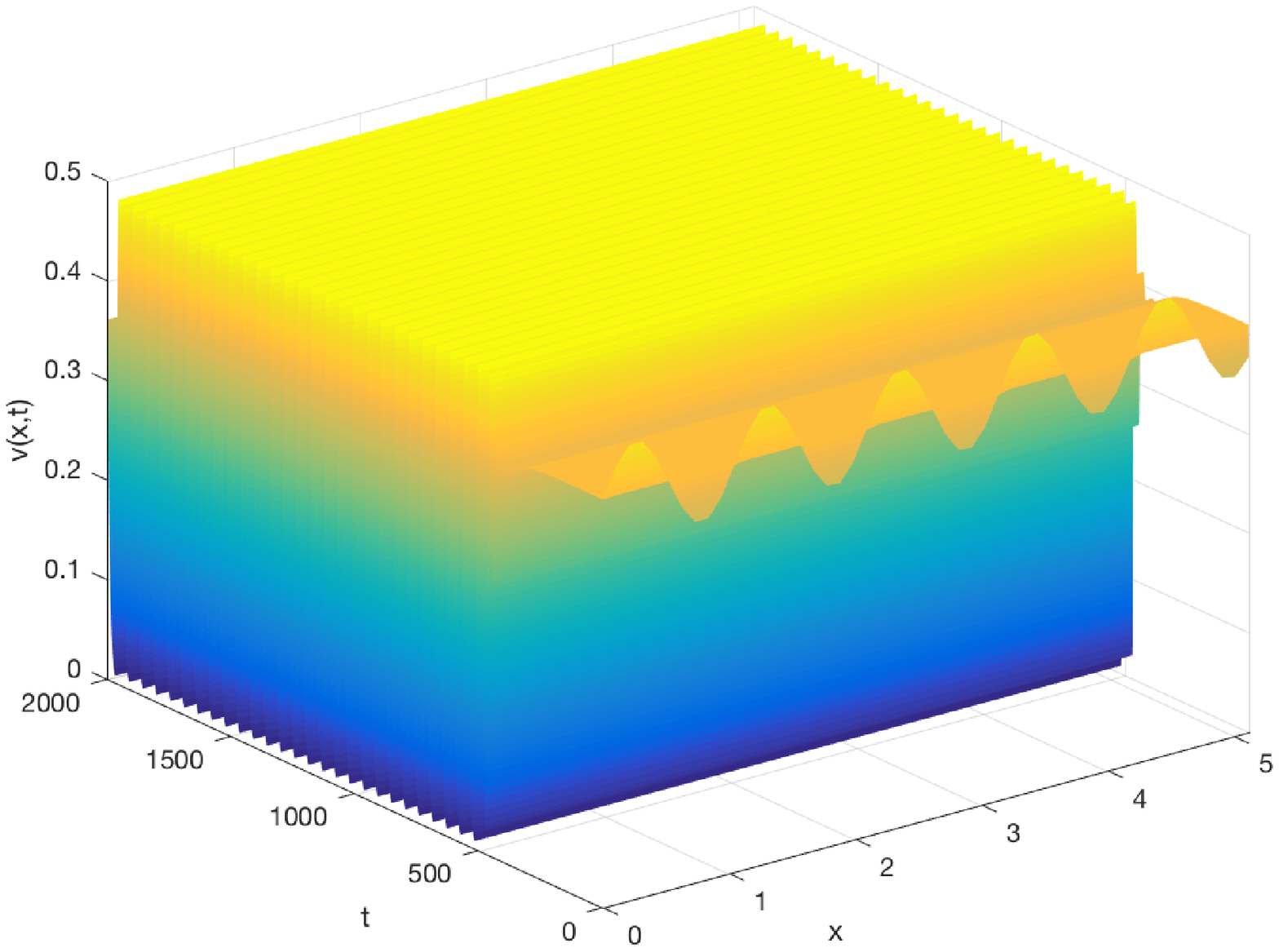}                
 		\end{minipage}
 	}\caption{A stable spatially homogeneous periodic solution in {$\mathbf D_3$}, with $(\alpha_1,\alpha_2)=(-0.0325, 0.0356)$ and the initial functions are $u_0(x)=v_0(x)=u_0+0.05\sin6x$.} 
 	\label{fig3}                                                    
 \end{figure}

  \begin{figure}[htbp]
  	\centering                           
  	\subfigure[$u(t,x)$]{       
  		\begin{minipage}{0.48\linewidth} 
  			\centering                                      
  			\includegraphics[scale=0.33]{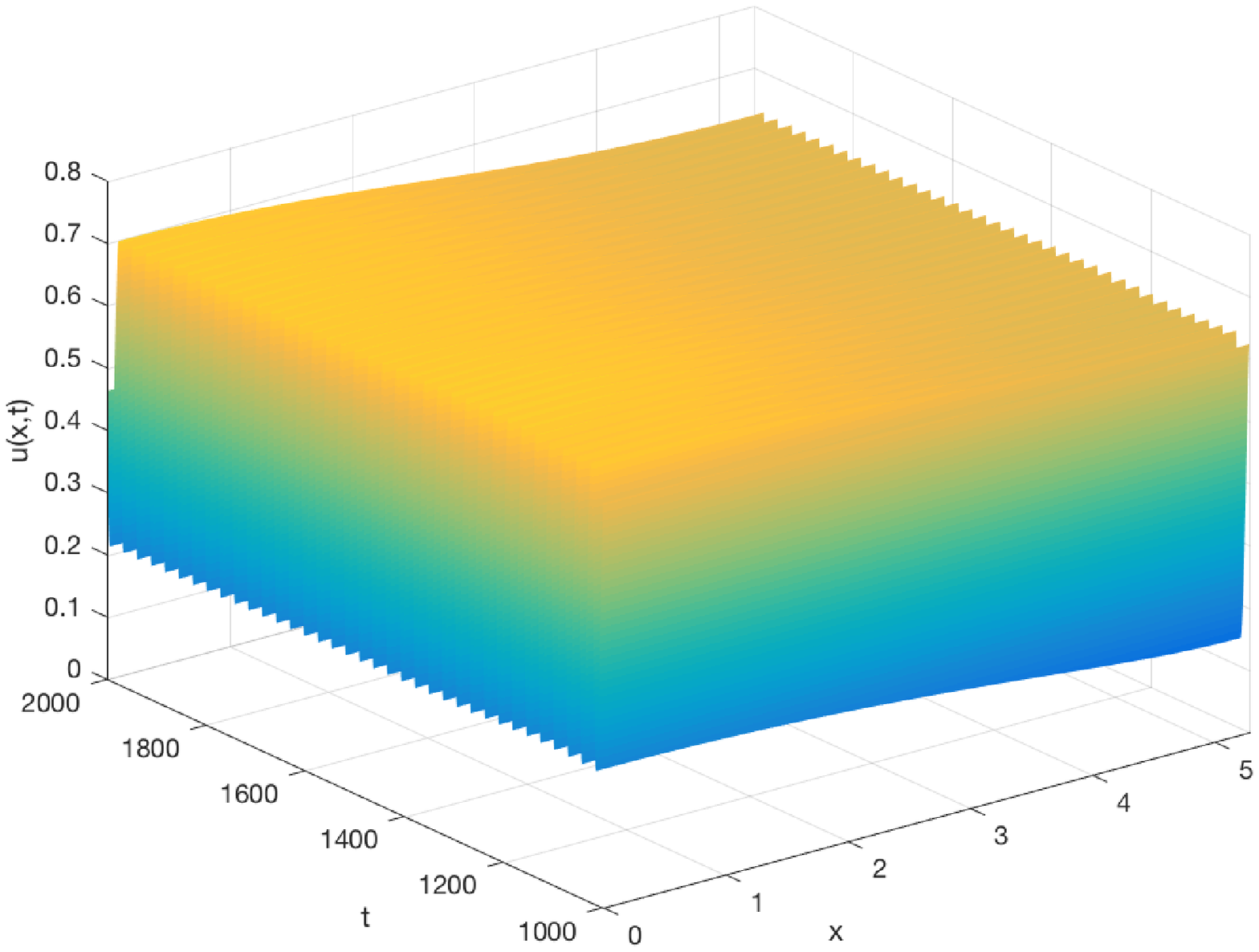}            
  	\end{minipage}}
  	\subfigure[$v(t,x)$]{                  
  		\begin{minipage}{0.48\linewidth} 
  			\centering                                     
  			\includegraphics[scale=0.33]{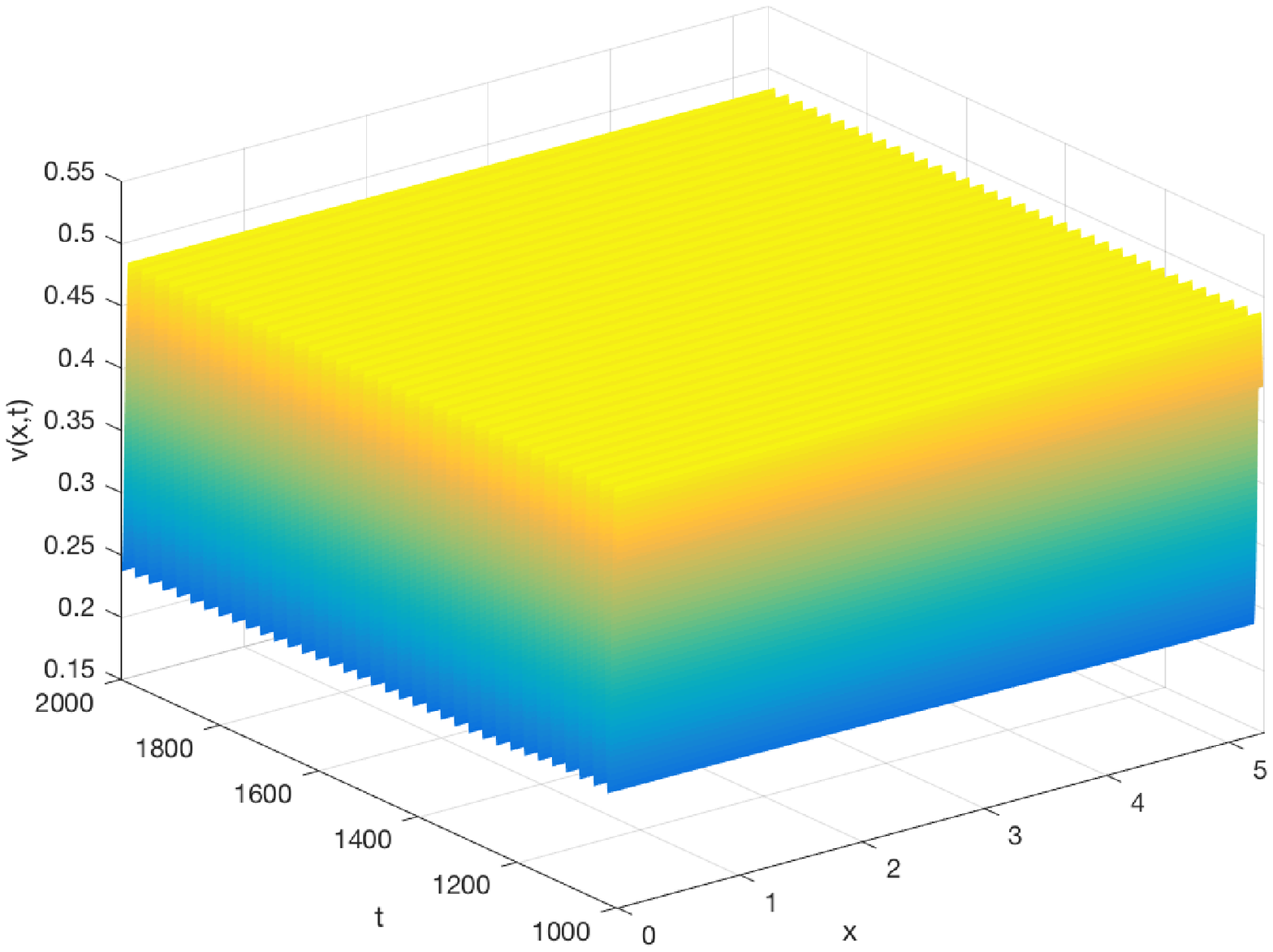}                
  	\end{minipage}}
\caption{A stable spatially non-homogeneous periodic solution in {$\mathbf D_4$}, with $(\alpha_1,\alpha_2)=(-0.0030, 0.0888)$ and the initial value functions are $u_0(x)=v_0(x)=u_0+0.05\sin2x$.} 
\label{fig4-1}                                                    
\end{figure}
 With the parameters $(\alpha_1,\alpha_2)$ move to $D_4$ and pass through the curve $L_3$, the system \eqref{eqA} undergoes a pitchfork bifurcation at the spatially homogeneous periodic solution. The directly result is two symmetric stable spatially non-homogeneous periodic solutions are emerged in $D_4$, while the spatially homogeneous periodic solution loses its stability. In Figure \ref{fig4-1}- Figure \ref{fig4-2}, parameters are chosen in $D_4$, and we find two spatially non-homogeneous periodic solutions coexist with the spatial amplitude is not very large.
\begin{figure}[htbp]
  	\subfigure[$u(t,x)$]{       
  		\begin{minipage}{0.48\linewidth} 
  			\centering                                      
  			\includegraphics[scale=0.33]{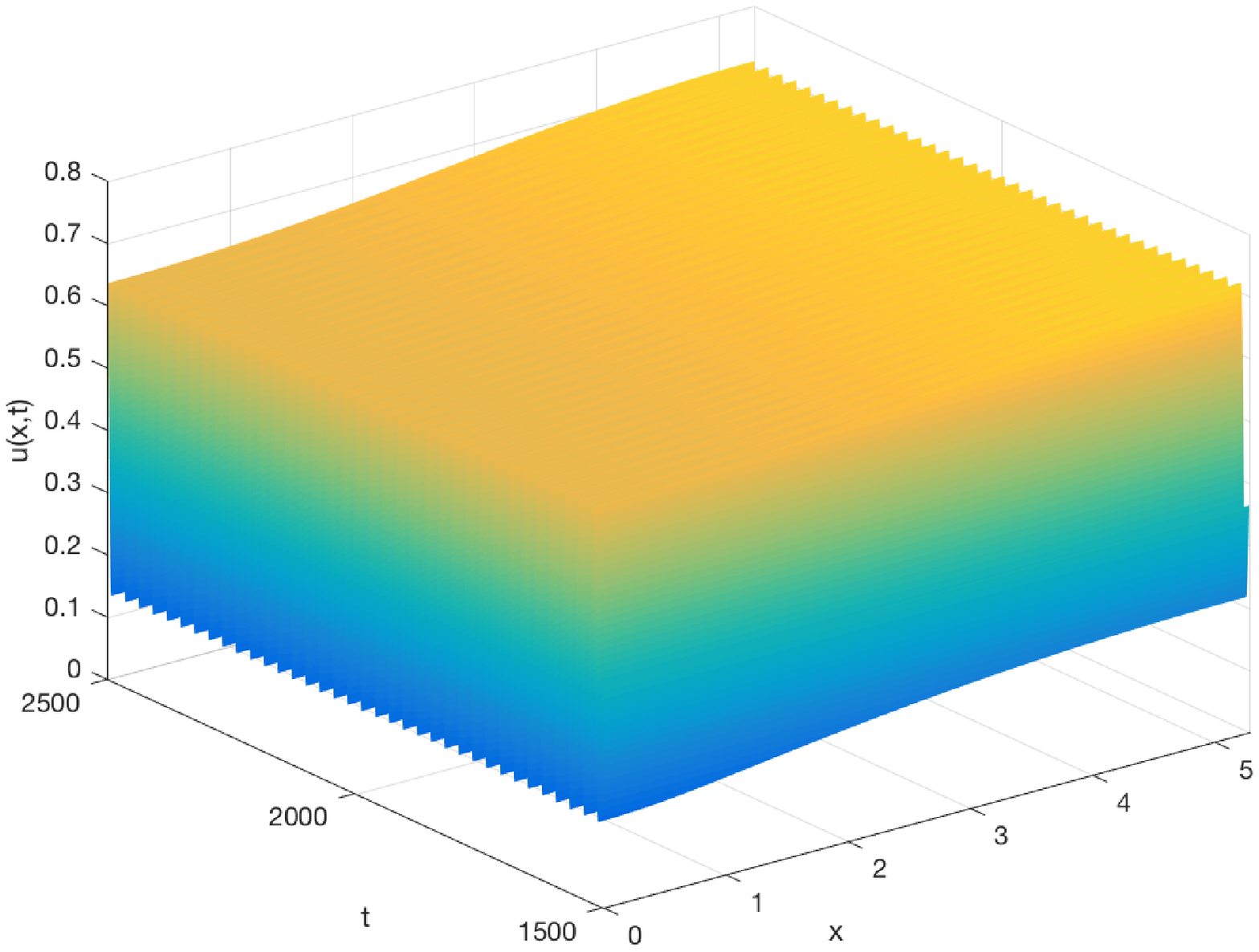}            
  	\end{minipage}}
  	\subfigure[$v(t,x)$]{                  
  		\begin{minipage}{0.48\linewidth} 
  			\centering                                     
  			\includegraphics[scale=0.33]{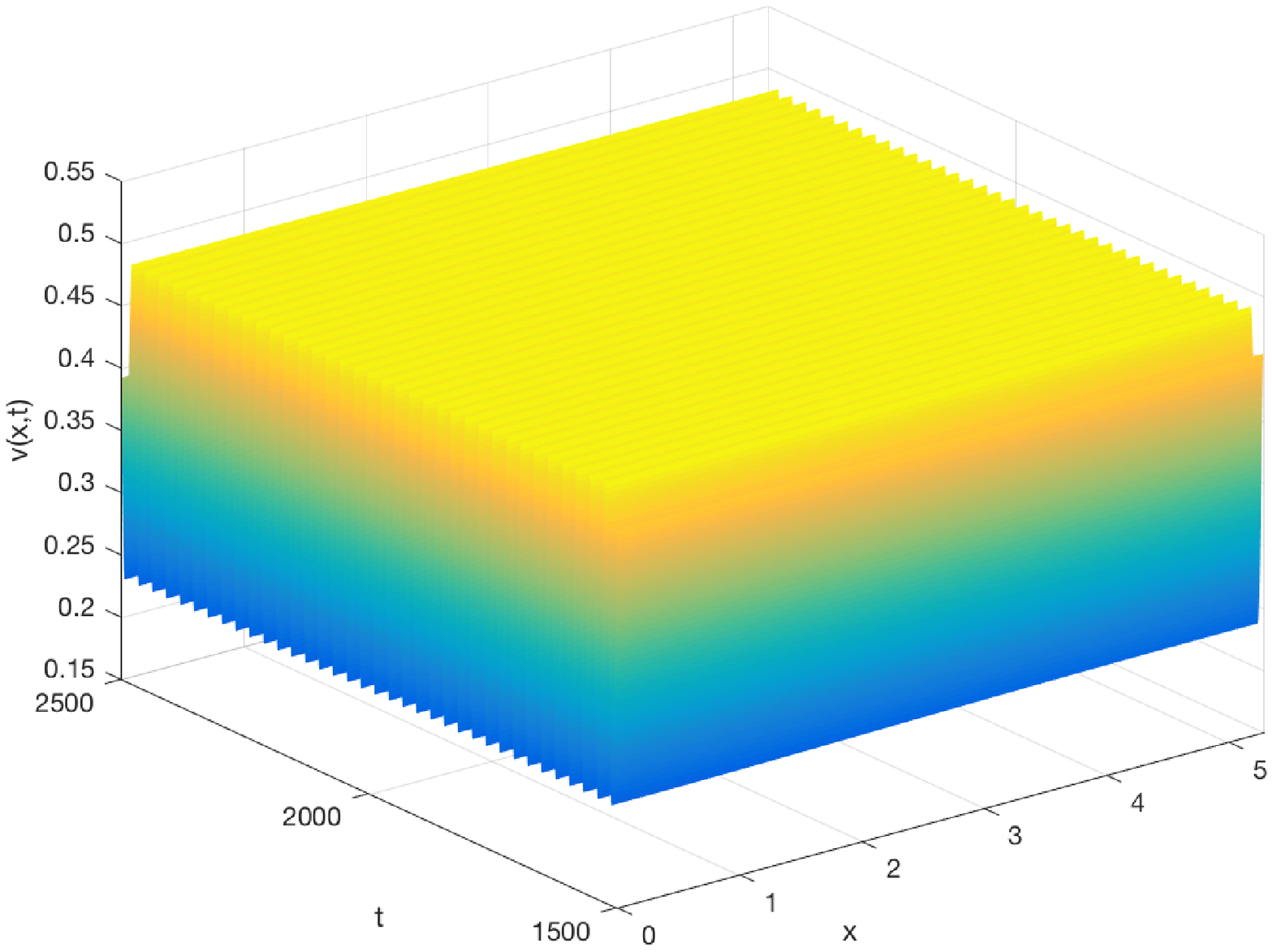}                
  		\end{minipage}
  	}
\caption{A stable spatially non-homogeneous periodic solution in {$\mathbf D_4$}, with $(\alpha_1,\alpha_2)=(-0.0030, 0.0888)$ and the initial value functions are $u_0(x)=v_0(x)=u_0-0.05\sin2x$.} 
  	\label{fig4-2}                                                    
  \end{figure}
  \begin{figure}[htbp]
	\centering                           
	\subfigure[$u(t,x)$]{       
		\begin{minipage}{0.48\linewidth} 
			\centering                                      
			\includegraphics[scale=0.33]{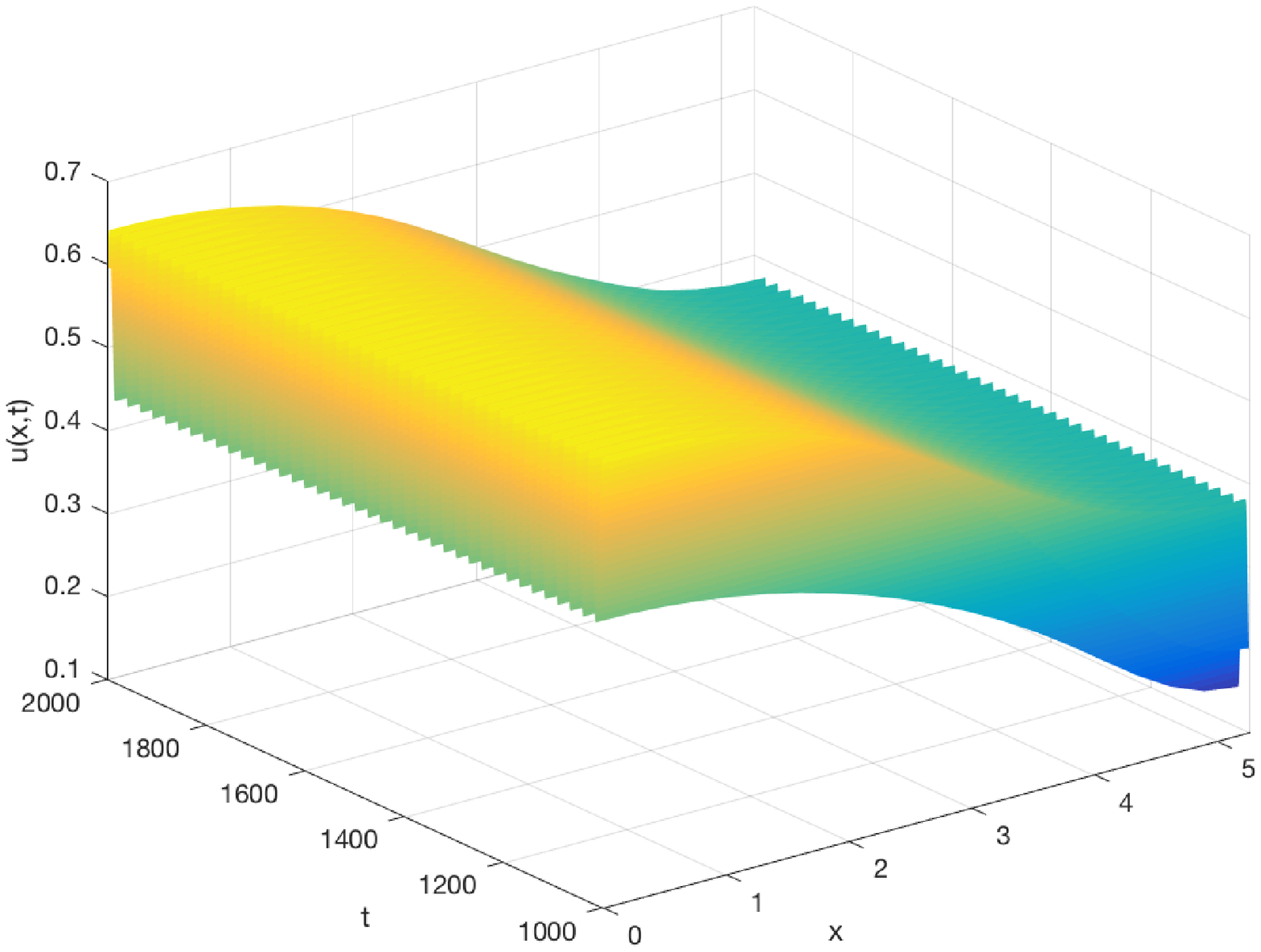}            
	\end{minipage}}
	\subfigure[$v(t,x)$]{                  
		\begin{minipage}{0.48\linewidth} 
			\centering                                     
			\includegraphics[scale=0.33]{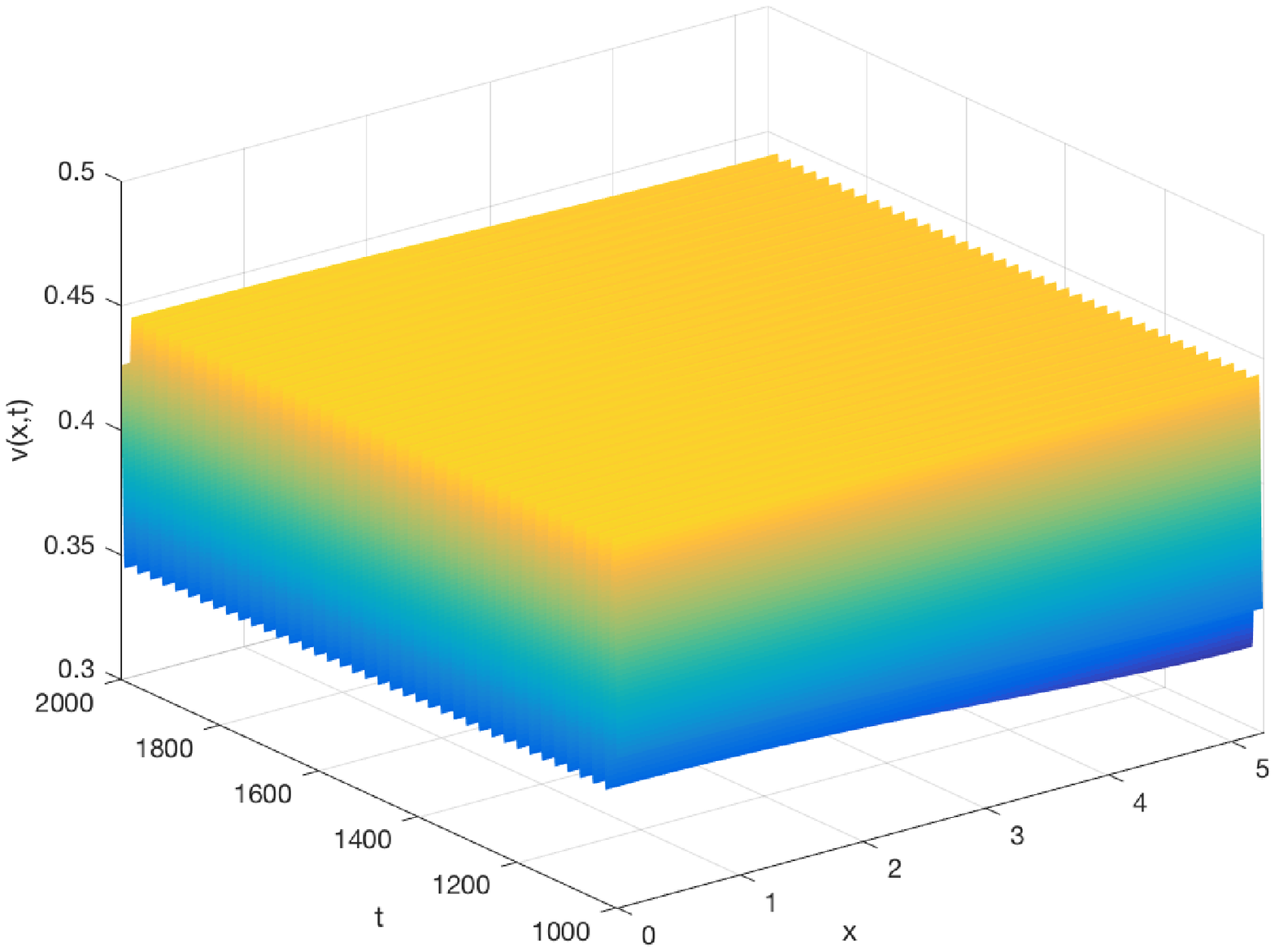}                
		\end{minipage}
	}
	\caption{A stable spatially non-homogeneous periodic solution in {$\mathbf D_5$}, with $(\alpha_1,\alpha_2)=(0.0352,0.0817)$ and the initial value functions are $u_0(x)=v_0(x)=u_0+0.05\sin2x$.}
	\label{fig5-1}
\end{figure} 

\begin{figure}[htbp]	
	\subfigure[$u(t,x)$]{       
		\begin{minipage}{0.48\linewidth} 
			\centering                                      
			\includegraphics[scale=0.33]{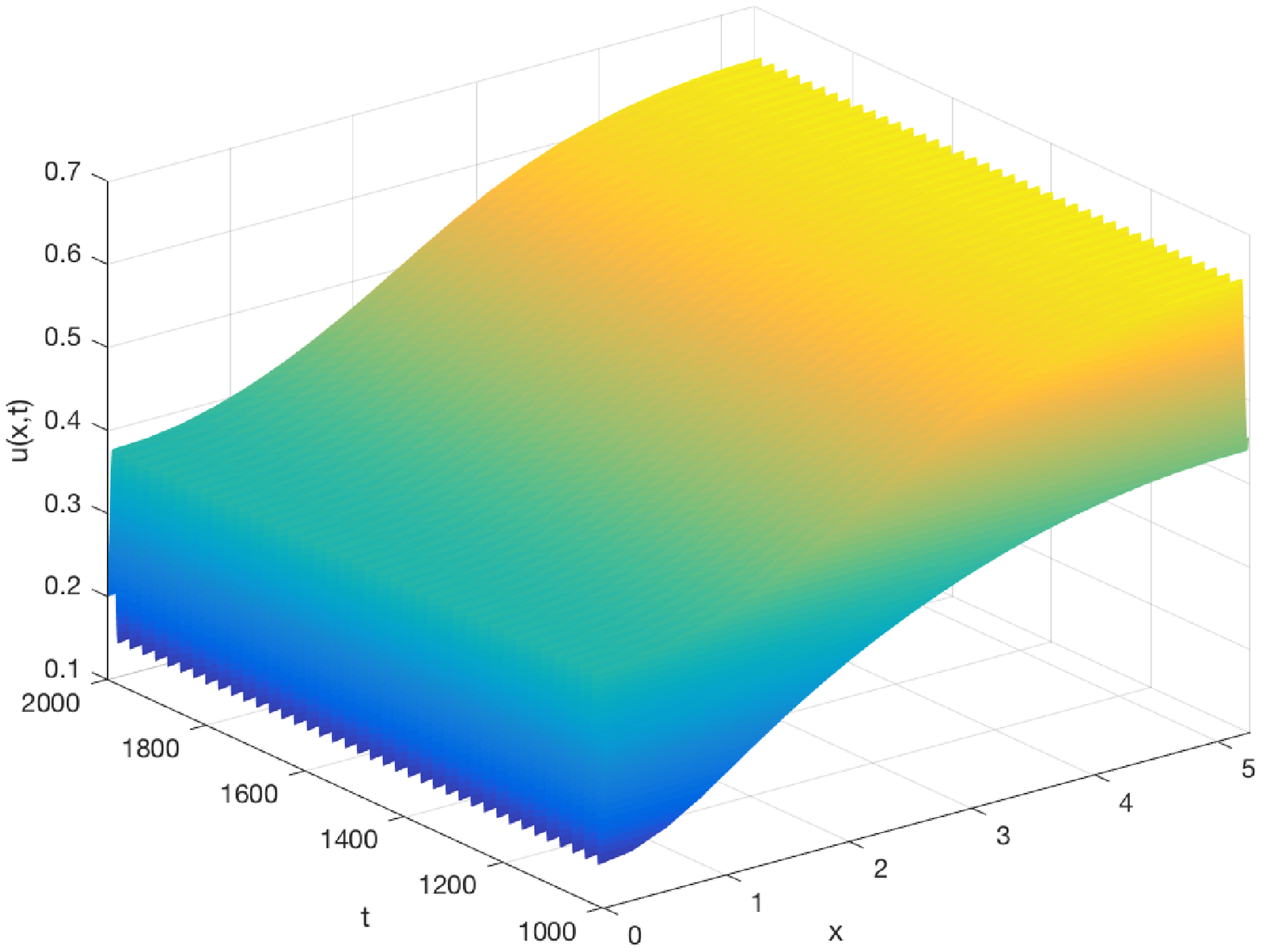}            
	\end{minipage}}
	\subfigure[$v(t,x)$]{                  
		\begin{minipage}{0.48\linewidth} 
			\centering                                     
			\includegraphics[scale=0.33]{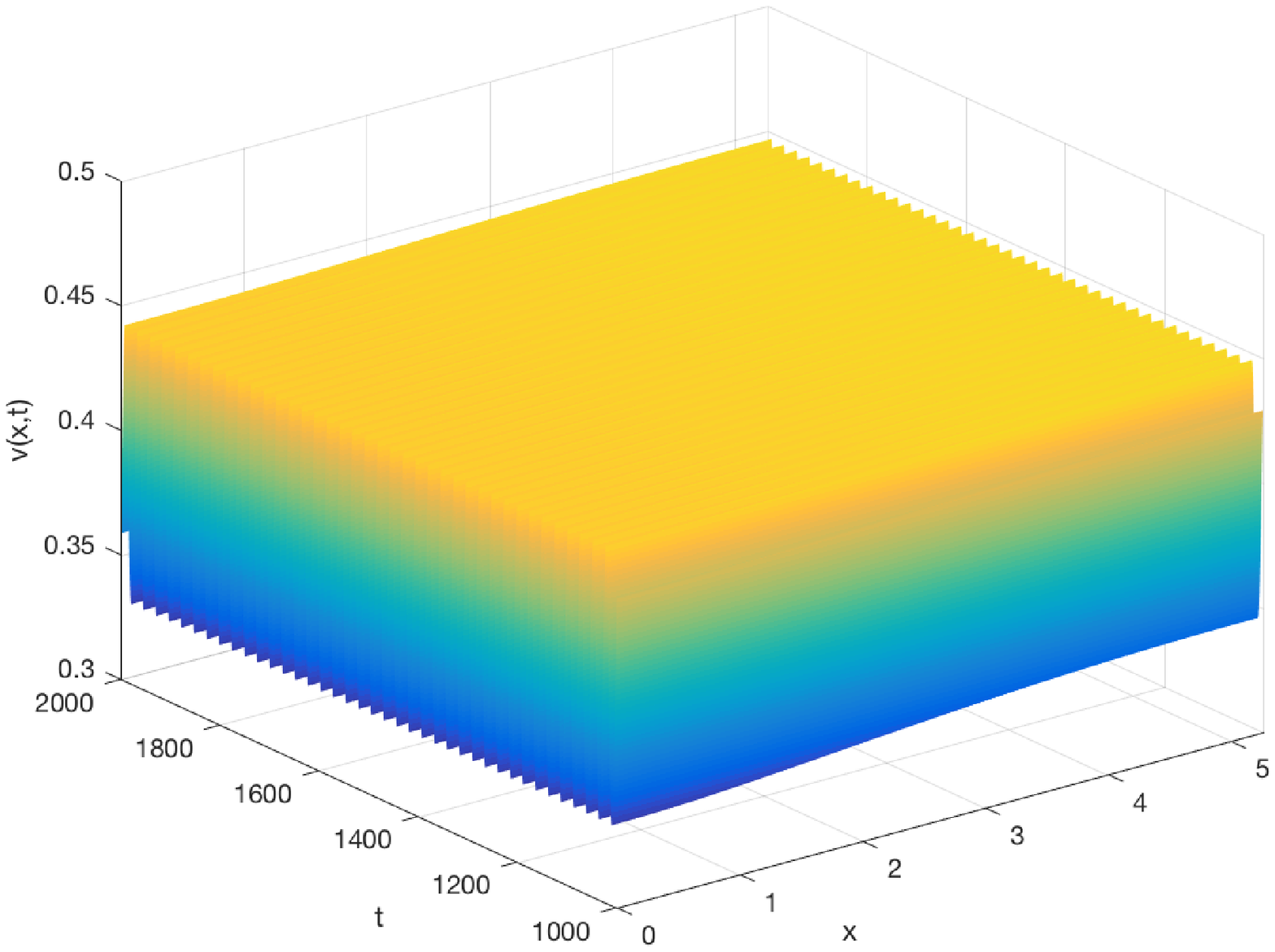}                
		\end{minipage}
	}\caption{A stable spatially non-homogeneous periodic solution in {$\mathbf D_5$}, with $(\alpha_1,\alpha_2)=(0.0352,0.0817)$ and the initial value functions are $u_0(x)=v_0(x)=u_0-0.05\sin2x$.}
	\label{fig5-2}
\end{figure} 
 In $D_5$, the unstable spatially homogeneous periodic solution disappeared, once again, because the existence of the pitchfork line $L_4$ of $(u_0,v_0)$. The  symmetric spatially non-homogeneous periodic solutions are still stable and we shown them in Figure \ref{fig5-1}- Figure \ref{fig5-2}. Compared with Figure \ref{fig4-1} Figure-\ref{fig4-2}, the spatial amplitude of the solutions is lager and the oscillation about time becomes smaller in  Figure \ref{fig5-1}- Figure\ref{fig5-2}.
 
  $L_5$ is a Hopf curve of the spatially non-homogeneous quasi-periodic solutions.  As a result,  two symmetric stable spatially non-homogeneous quasi-periodic solutions are bifurcated from the spatially non-homogeneous periodic solutions in $D_6$. Chosen $(\alpha_1,\alpha_2)\in D_6$, two spatially non-homogeneous quasi-periodic solutions are found in Figure  \ref{fig6-1}- Figure\ref{fig6-2}.
  \begin{figure}[htbp]
  	\centering                           
  	\subfigure[$u(t,x)$]{       
  		\begin{minipage}{0.48\linewidth} 
  			\centering                                      
  			\includegraphics[scale=0.33]{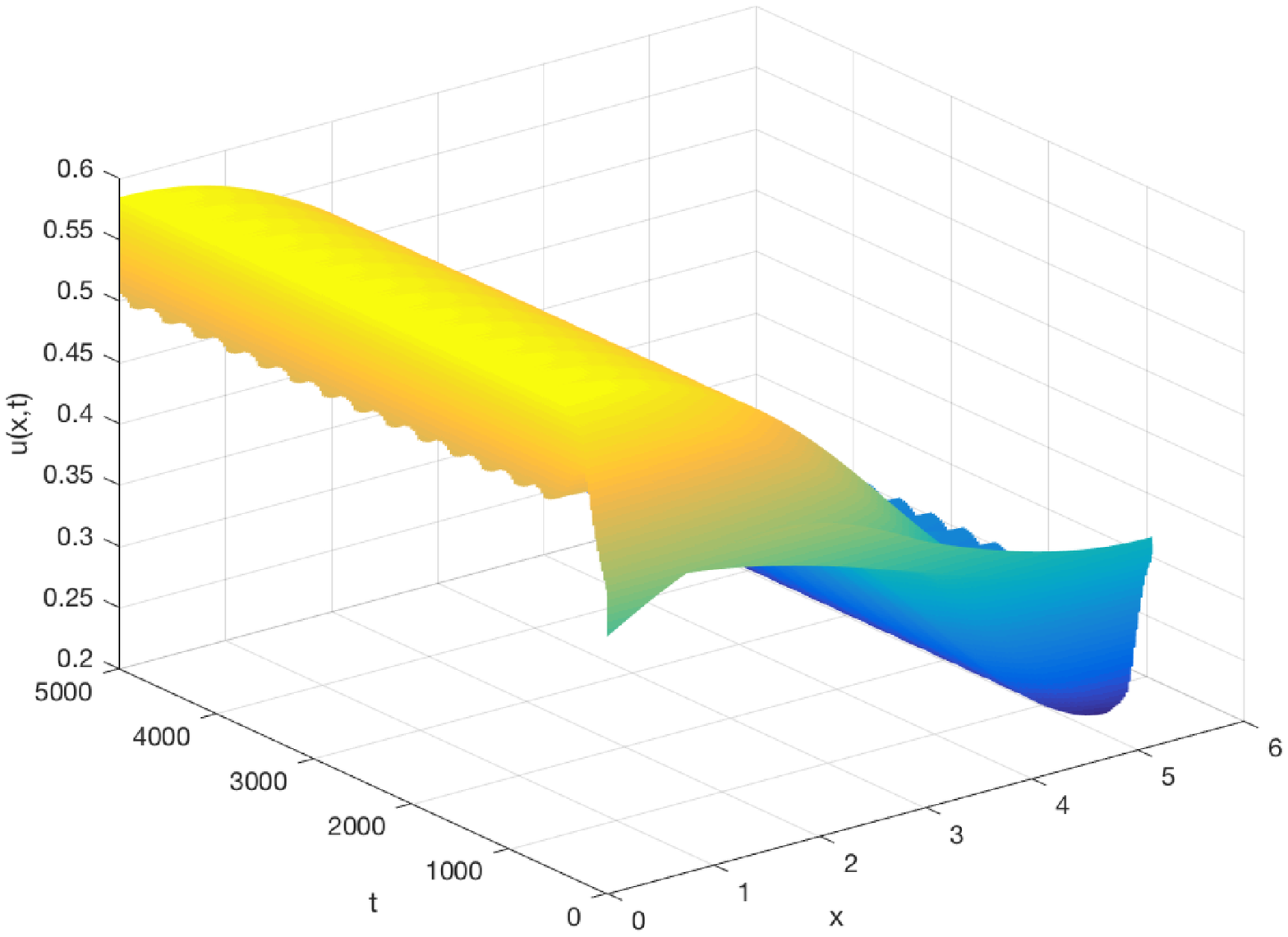}            
  	\end{minipage}}
  	\subfigure[$v(t,x)$]{       
  		\begin{minipage}{0.48\linewidth} 
  			\centering                                      
  			\includegraphics[scale=0.33]{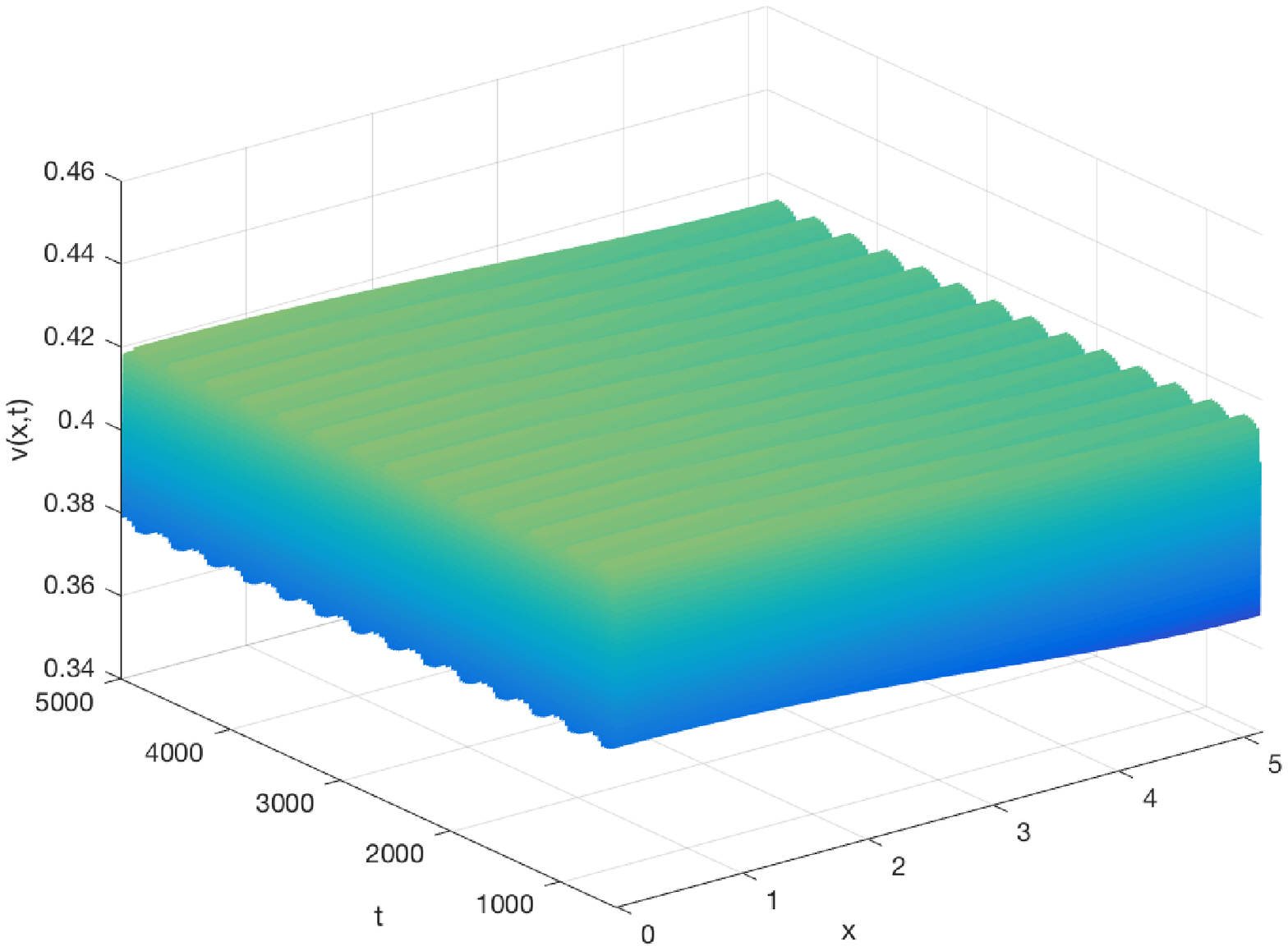}            
  	\end{minipage}}
  
  \subfigure[$u(t,0)$]{                  
  	\begin{minipage}{0.48\linewidth} 
  		\centering                                     
  		\includegraphics[width = 1.\textwidth,height = 0.4 \textwidth]{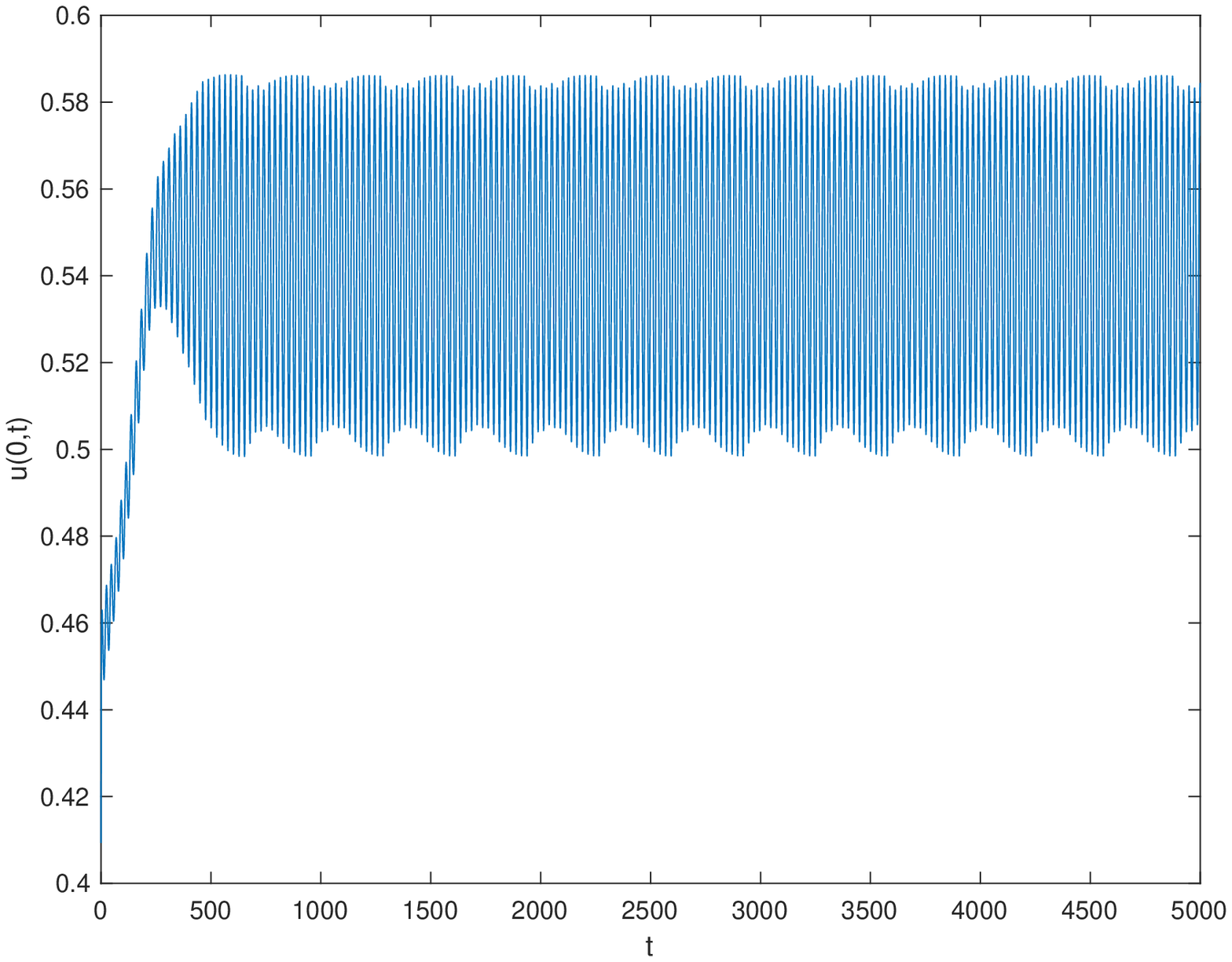}               
  \end{minipage}}
  	\subfigure[$v(t,0)$]{                  
  		\begin{minipage}{0.48\linewidth} 
  			\centering                                     
  			\includegraphics[width = 1.\textwidth,height = 0.4 \textwidth]{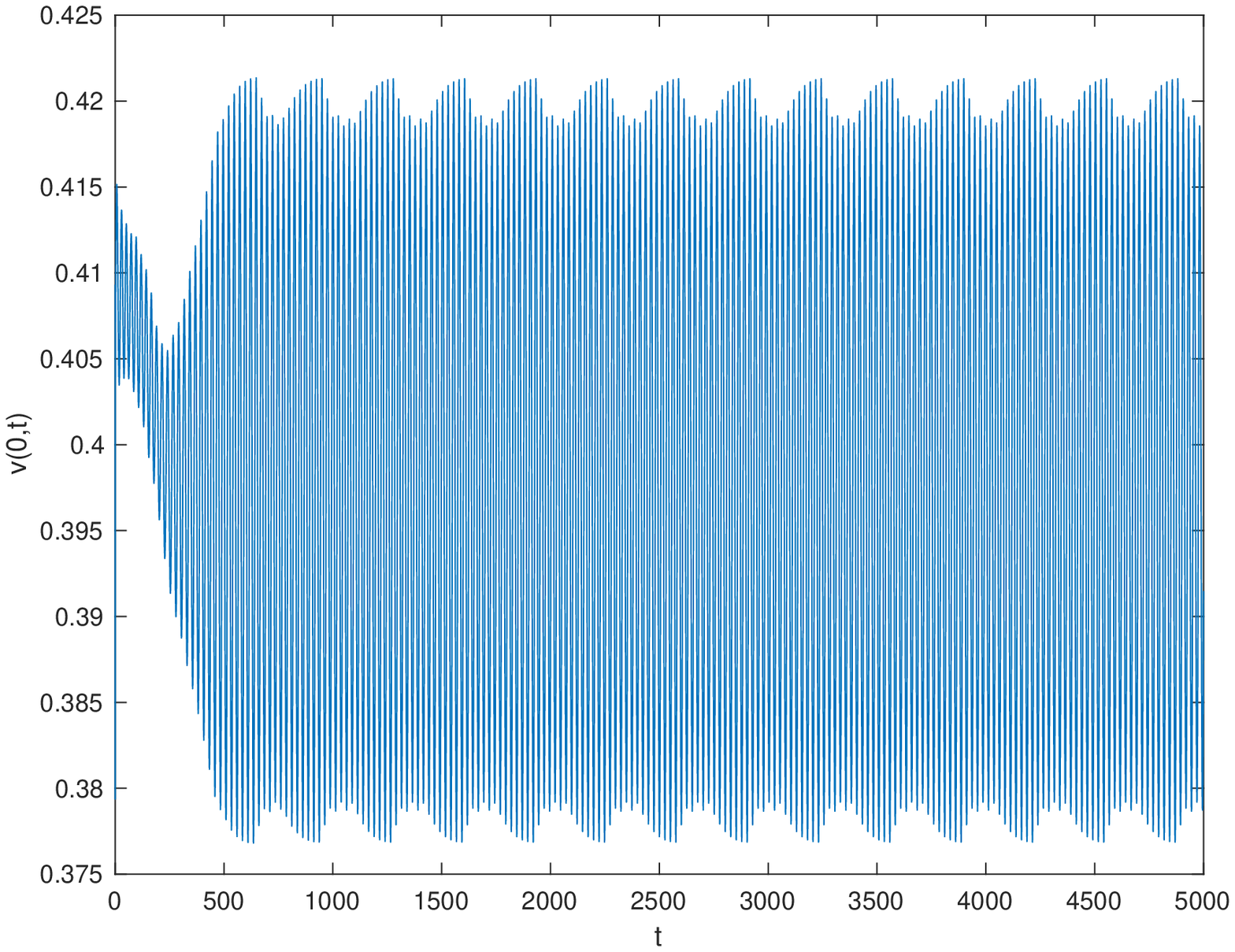}                
  	\end{minipage}}
  	\caption{A stable spatially non-homogeneous quasi-periodic solution in {$\mathbf D_6$}, with $(\alpha_1,\alpha_2)=(0.0405,0.0449)$ and the initial value functions are $u_0(x) = u_0+0.05\sin x,\; v_0(x)=u_0-0.05\sin x$.}
  	\label{fig6-1}
  \end{figure}
  \begin{figure}[htbp]
  	\centering                           
  	\subfigure[$u(t,x)$]{       
  		\begin{minipage}{0.48\linewidth} 
  			\centering                                      
  			\includegraphics[scale=0.33]{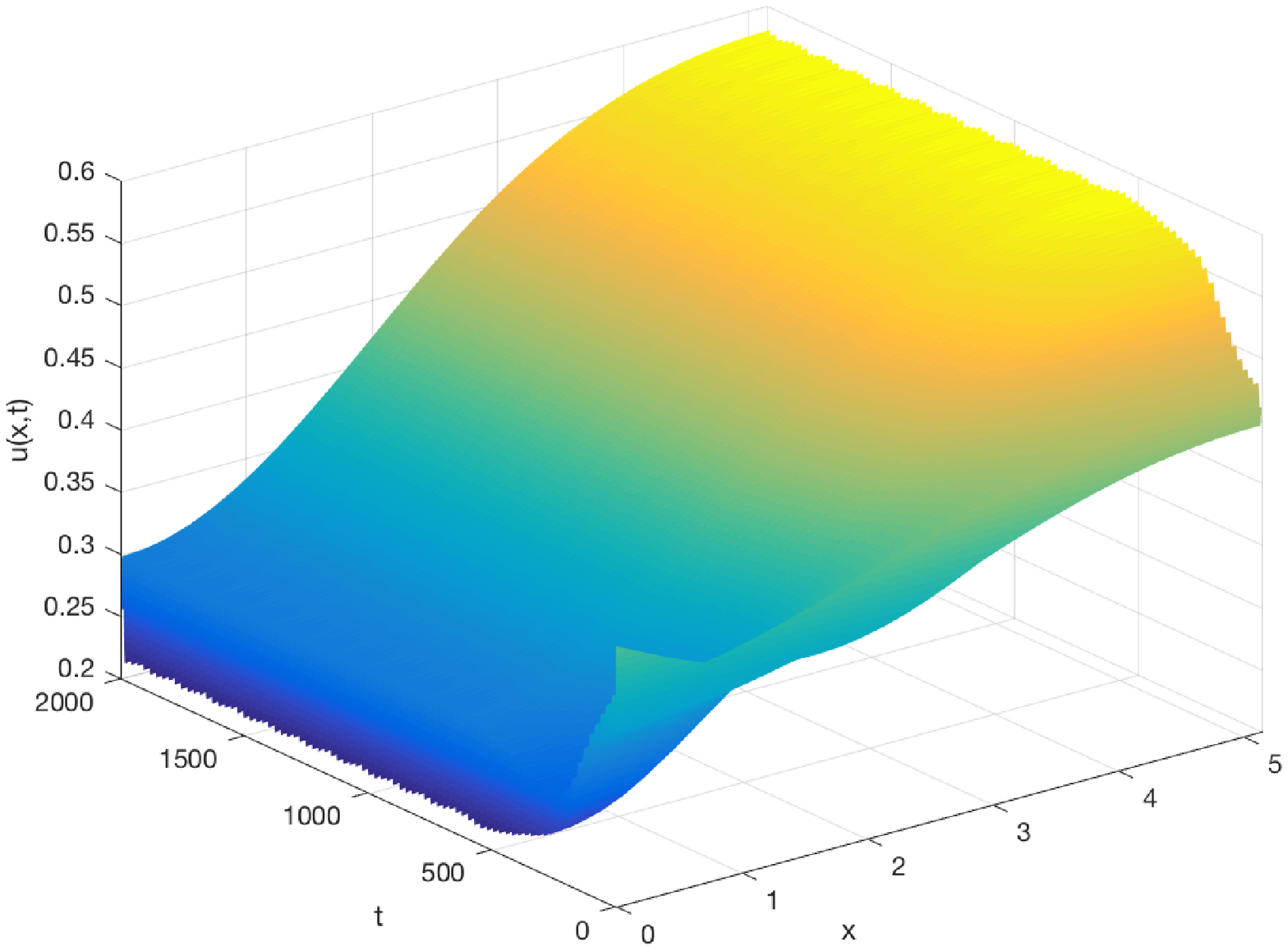}            
  	\end{minipage}}
  	\subfigure[$v(t,x)$]{       
  		\begin{minipage}{0.48\linewidth} 
  			\centering                                      
  			\includegraphics[scale=0.33]{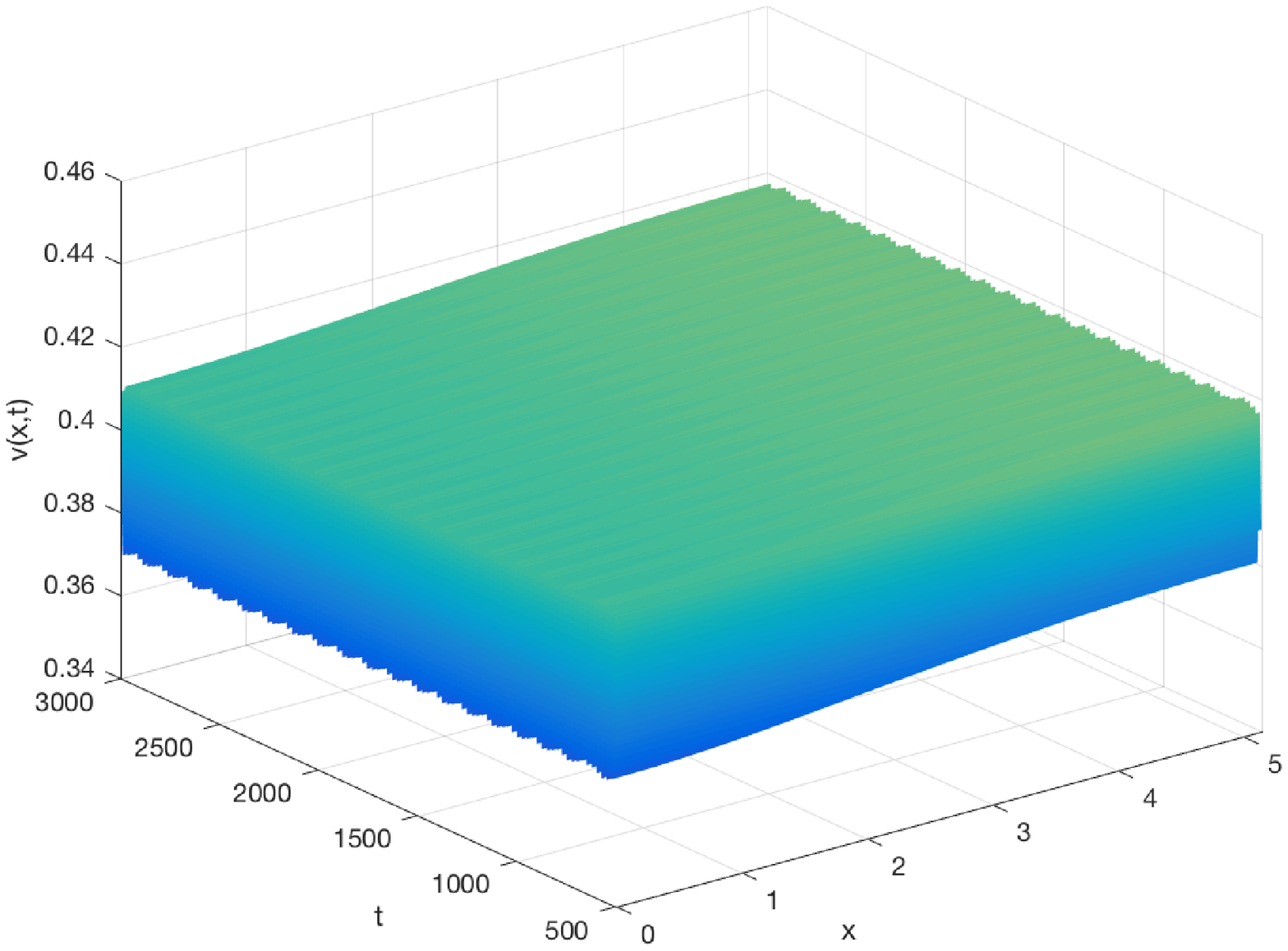}            
  	\end{minipage}}
  
  \subfigure[$u(t,0)$]{                  
  	\begin{minipage}{0.48\linewidth} 
  		\centering                                     
  		\includegraphics[width = 1.\textwidth,height = 0.4 \textwidth]{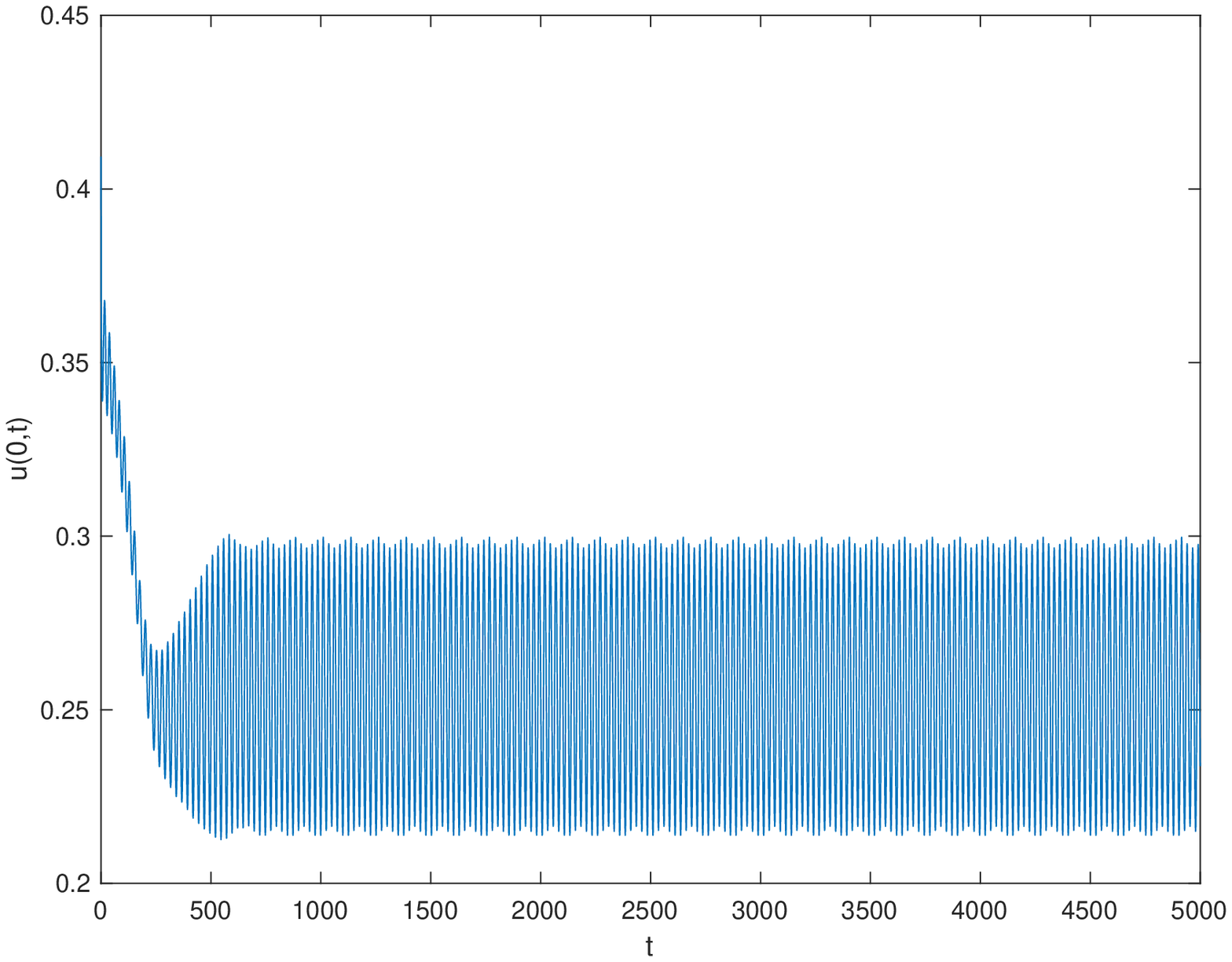}                
  	\end{minipage}}
  	\subfigure[$v(t,0)$]{                  
  		\begin{minipage}{0.48\linewidth} 
  			\centering                                     
  			\includegraphics[width = 1.\textwidth,height = 0.4 \textwidth]{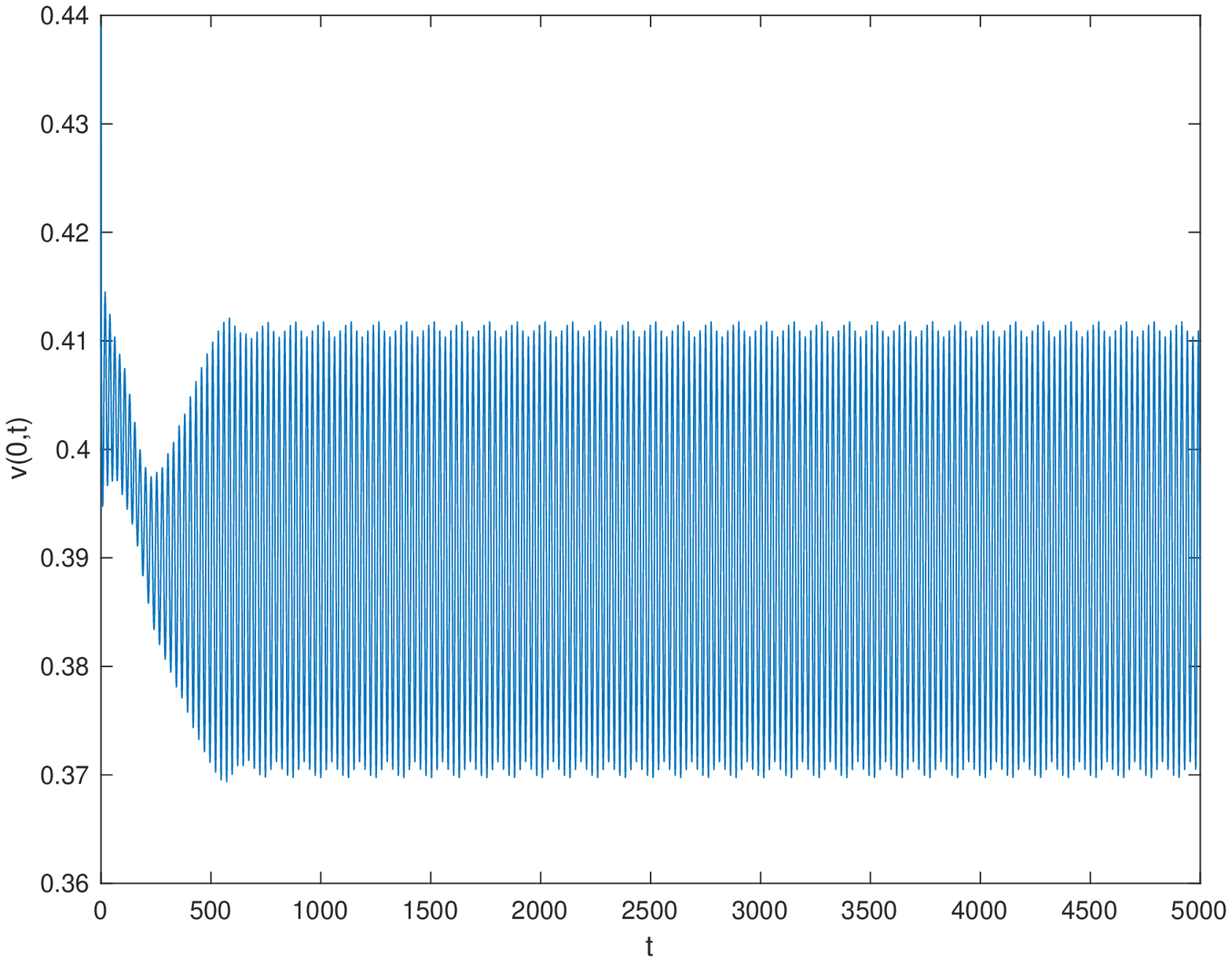}                
  	\end{minipage}}
  	\caption{A stable spatially non-homogeneous quasi-periodic solution in {$\mathbf D_6$}, with $(\alpha_1,\alpha_2)=(0.0405,0.0449)$ and the initial value functions are $u_0(x) = u_0-0.05\sin x,\; v_0(x)=u_0+0.05\sin x$.}
  	\label{fig6-2}
  \end{figure}

In $D_7$, the constant steady state $(u_0,v_0)$ becomes stable. Meanwhile two non-constant steady states appear and both of them are saddle points. The reason is also the pitchfork bifurcation of $(u_0,v_0)$ occurs at $L_6$. It is worth noting that, we also observed the existence of the spatially non-homogeneous quasi-periodic solutions in  the corresponding  numerical experiments. Which means there are three possible attractors coexist in the Holling-Tanner system \eqref{eqA} with parameters $(\alpha_1,\alpha_2)\in D_6$ and close to origin. We show these dynamical behaviors in Figure \ref{fig7-1}-Figure \ref{fig7-3}.
\begin{figure}[htbp]
	\centering                           
	\subfigure[$u(t,x)$]{       
		\begin{minipage}{0.48\linewidth} 
			\centering                                      
			\includegraphics[scale=0.33]{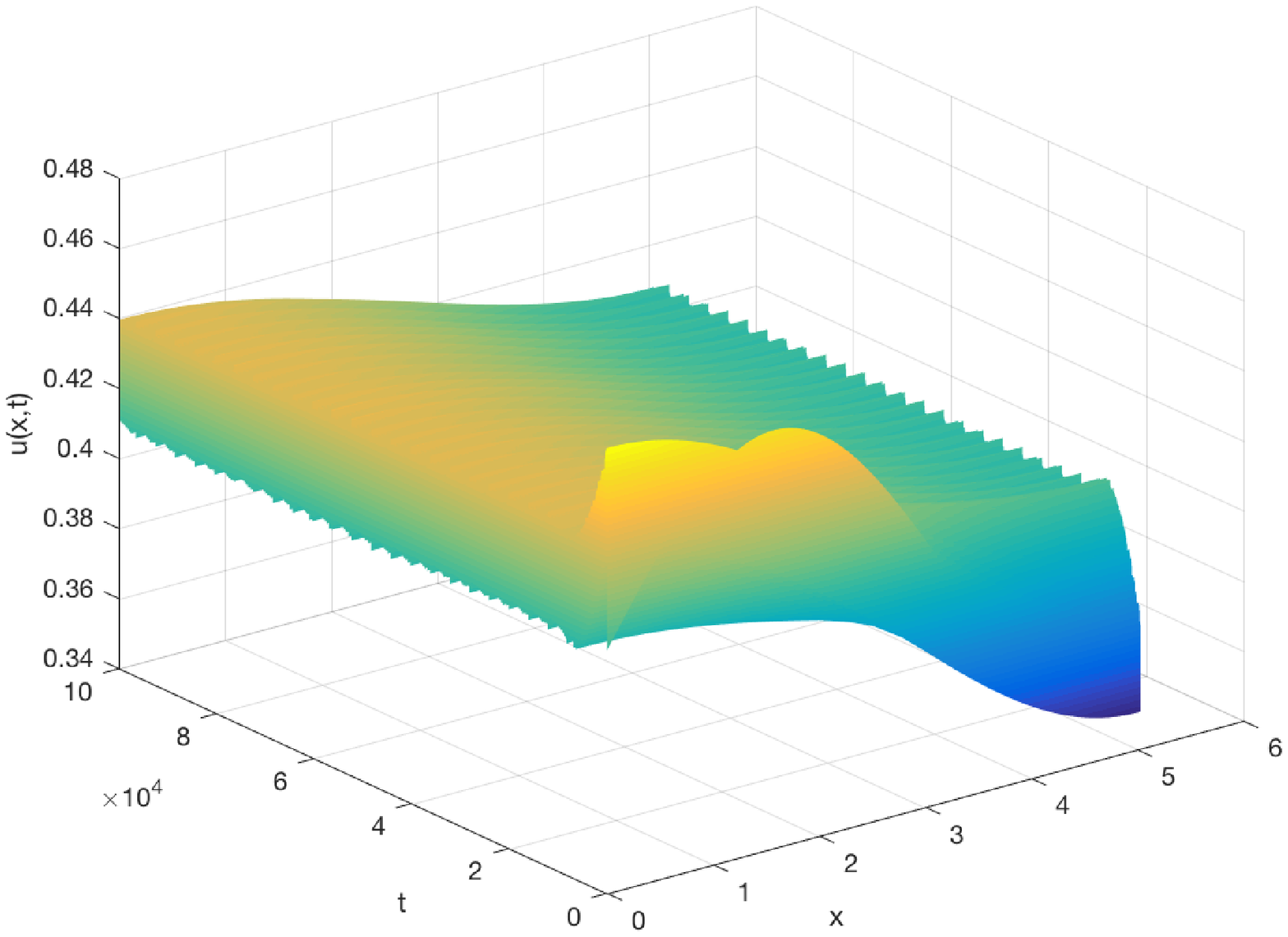}            
	\end{minipage}}
	\subfigure[$v(t,x)$]{       
		\begin{minipage}{0.48\linewidth} 
			\centering                                      
			\includegraphics[scale=0.33]{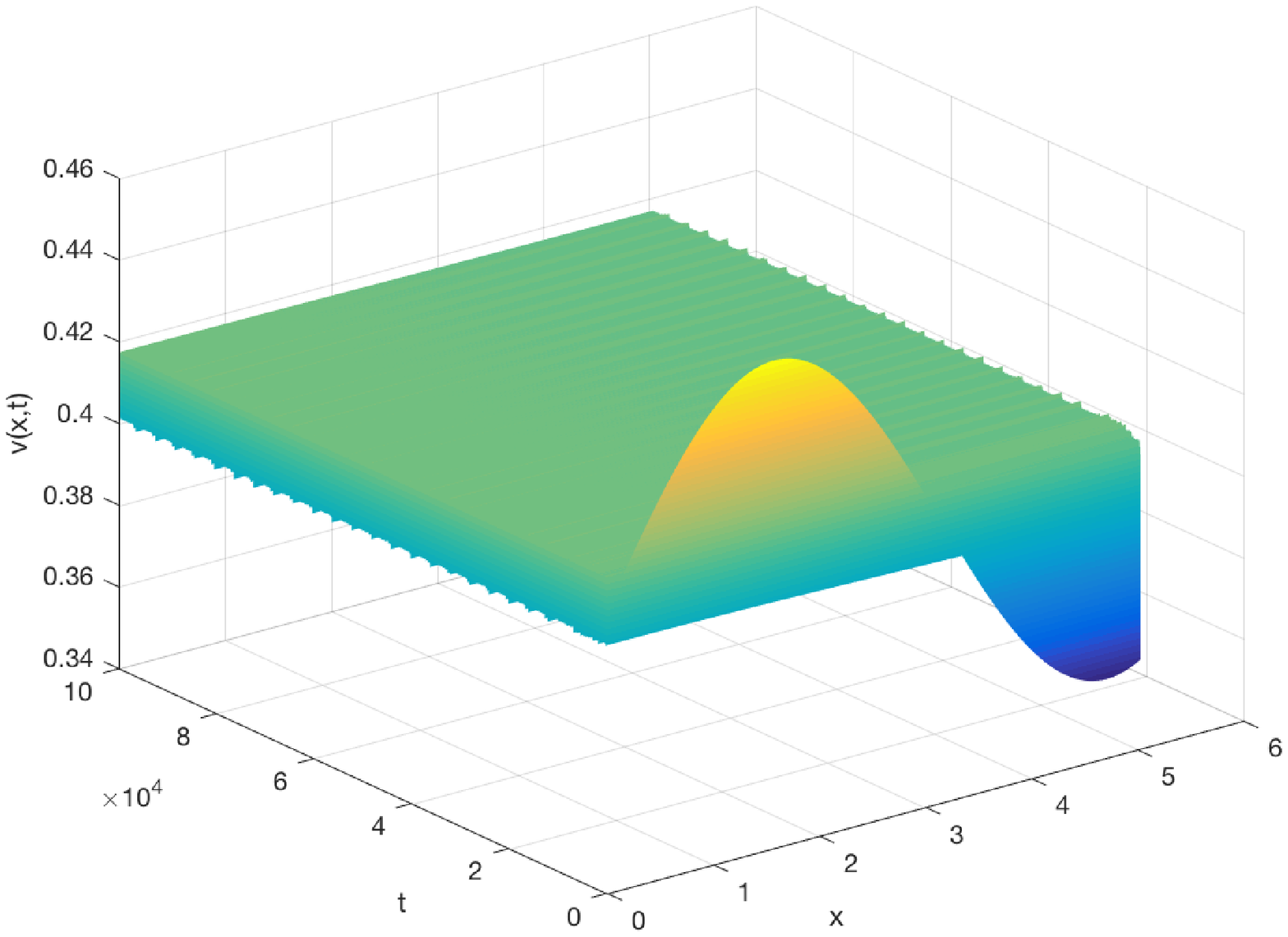}            
	\end{minipage}}
\vskip -0.3cm
	\subfigure[$u(t,0)$]{                  
	\begin{minipage}{0.48\linewidth} 
		\centering                                     
		\includegraphics[width = 1.\textwidth,height = 0.4 \textwidth]{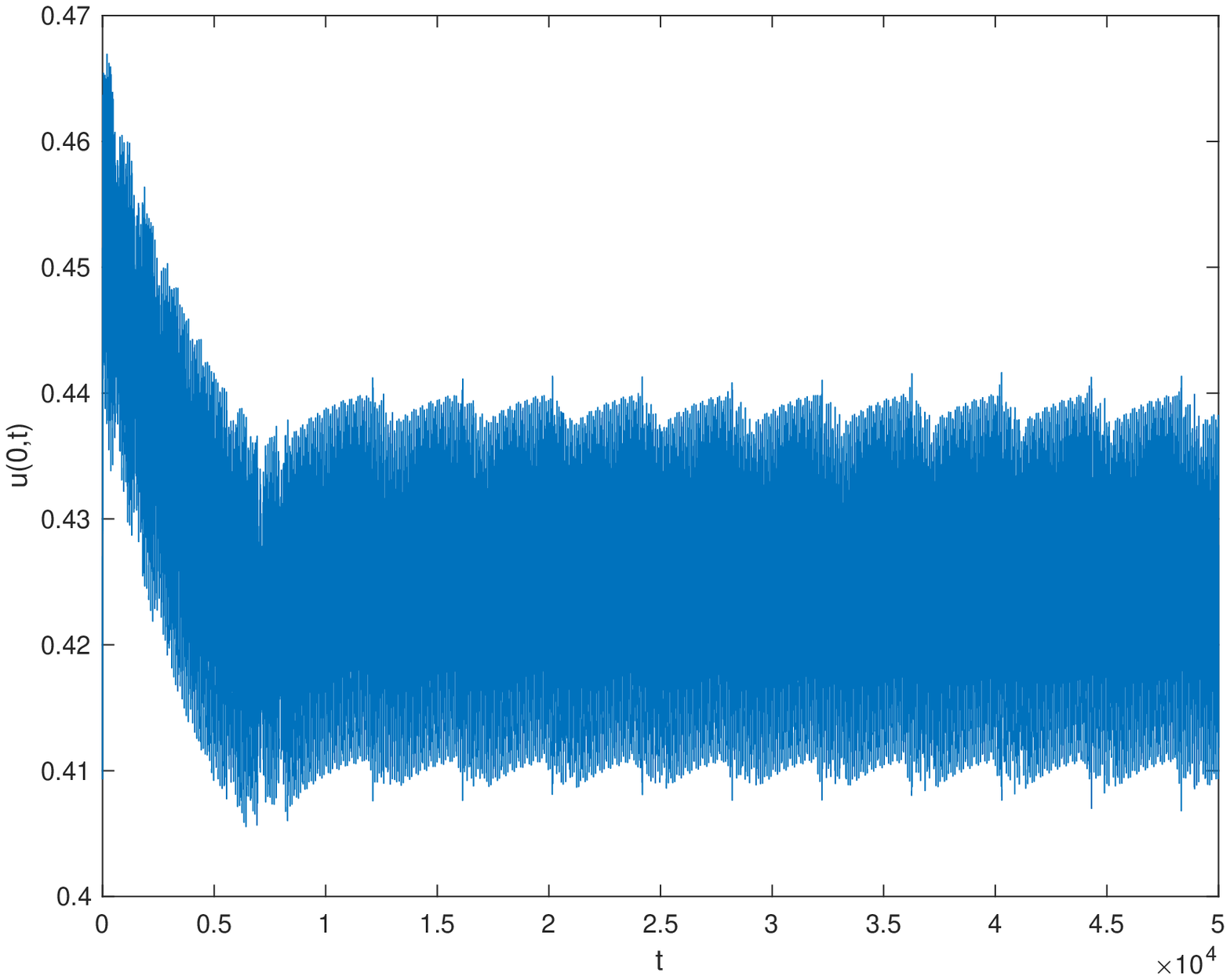}                
\end{minipage}}
	\subfigure[$v(t,0)$]{                  
		\begin{minipage}{0.48\linewidth} 
			\centering                                     
			\includegraphics[width = 1.\textwidth,height = 0.4 \textwidth]{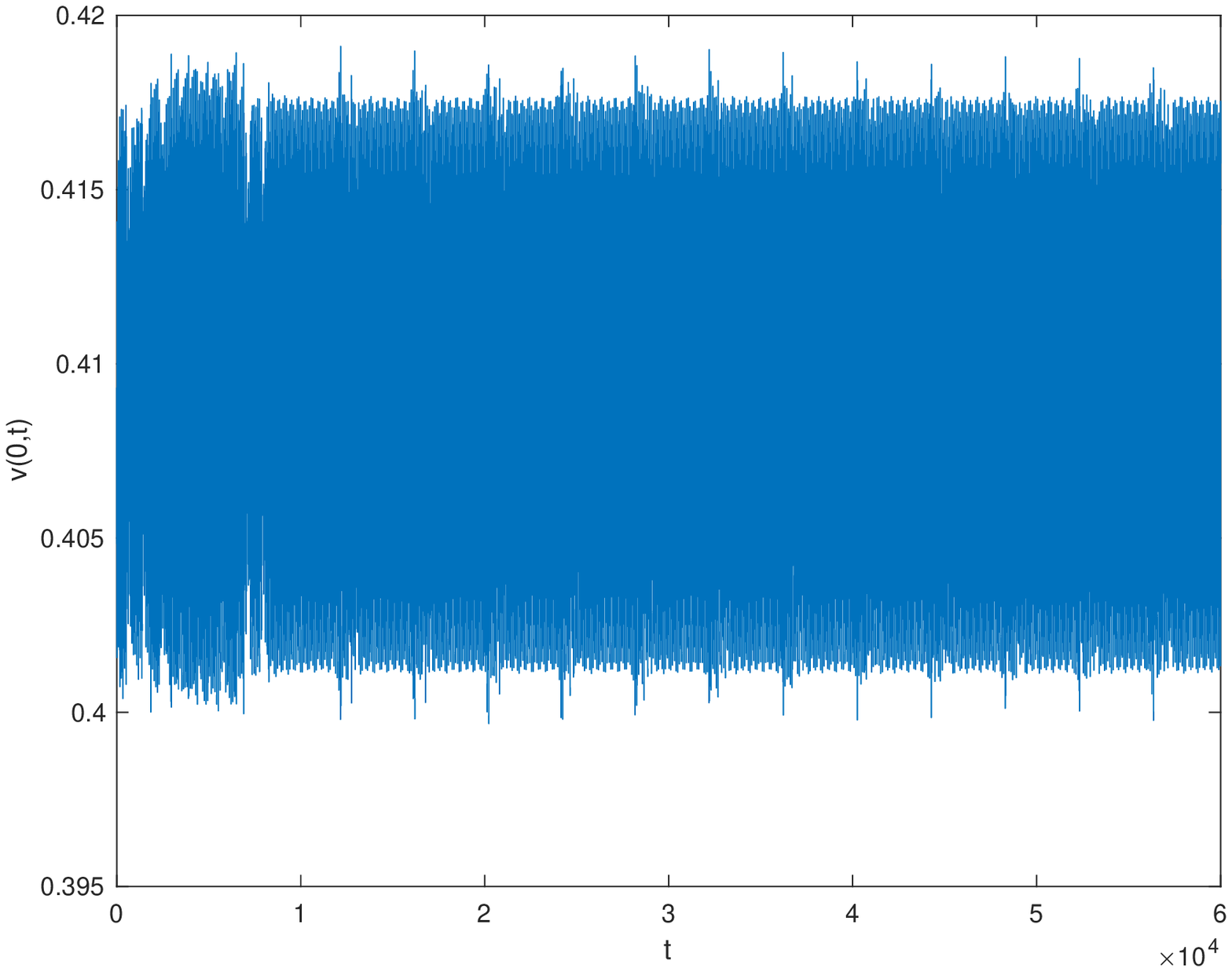}                
	\end{minipage}}
	\caption{A stable spatially non-homogeneous quasi-periodic solution in {$\mathbf D_7$}, with $(\alpha_1,\alpha_2)=(0.0220,0.0082)$ and the initial value functions are $u_0(x)=v_0(x) = u_0+0.05\sin x$.}
	\label{fig7-1}
\end{figure}

\begin{figure}[htbp]
	\centering                           
	\subfigure[$u(t,x)$]{       
		\begin{minipage}{0.48\linewidth} 
			\centering                                      
			\includegraphics[scale=0.33]{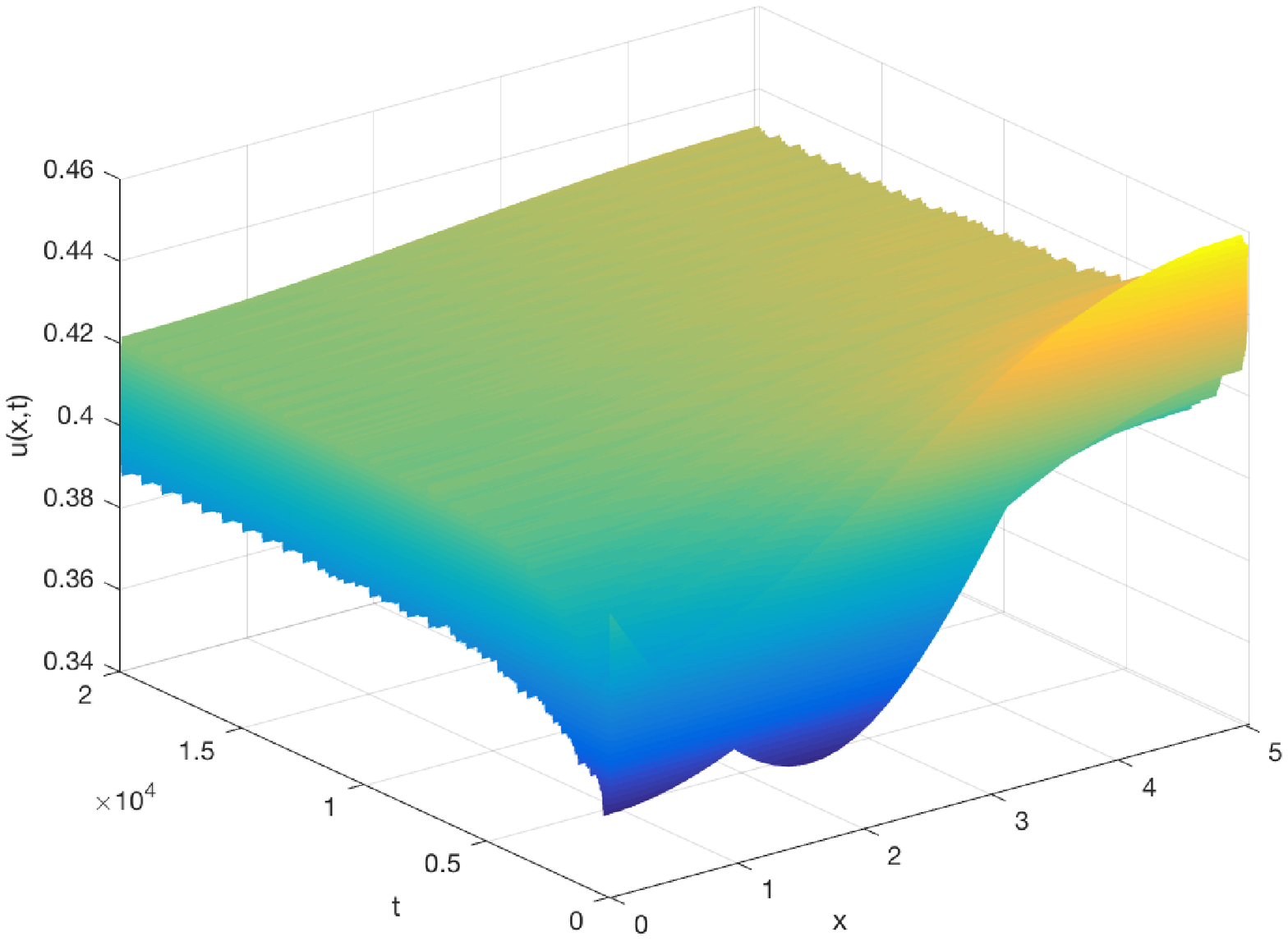}            
	\end{minipage}}
	\subfigure[$v(t,x)$]{       
		\begin{minipage}{0.48\linewidth} 
			\centering                                      
			\includegraphics[scale=0.33]{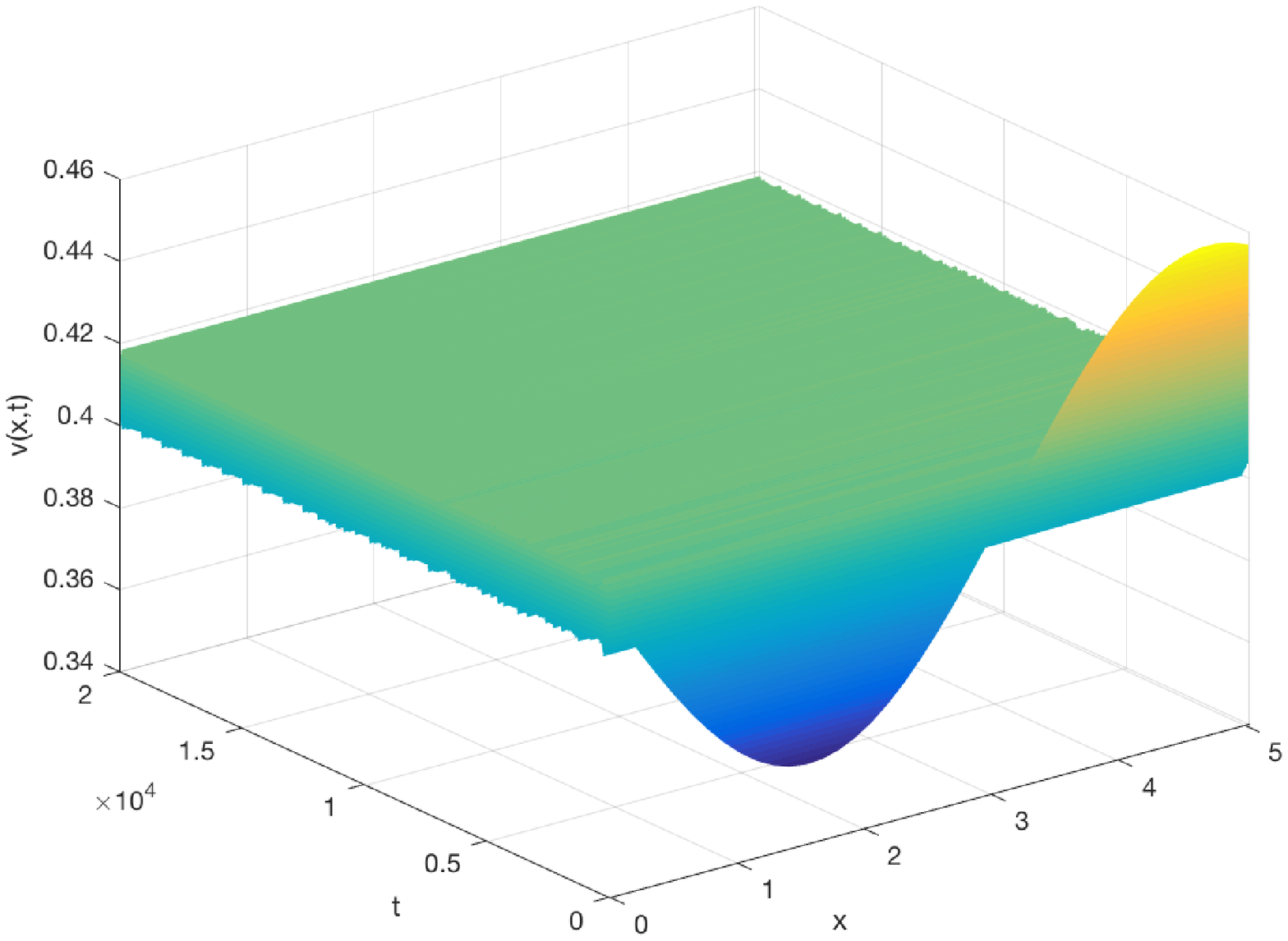}            
	\end{minipage}}
\vskip -0.3cm
	\subfigure[$u(t,0)$]{                  
	\begin{minipage}{0.48\linewidth} 
		\centering                                     
		\includegraphics[width = 1.\textwidth,height = 0.4 \textwidth]{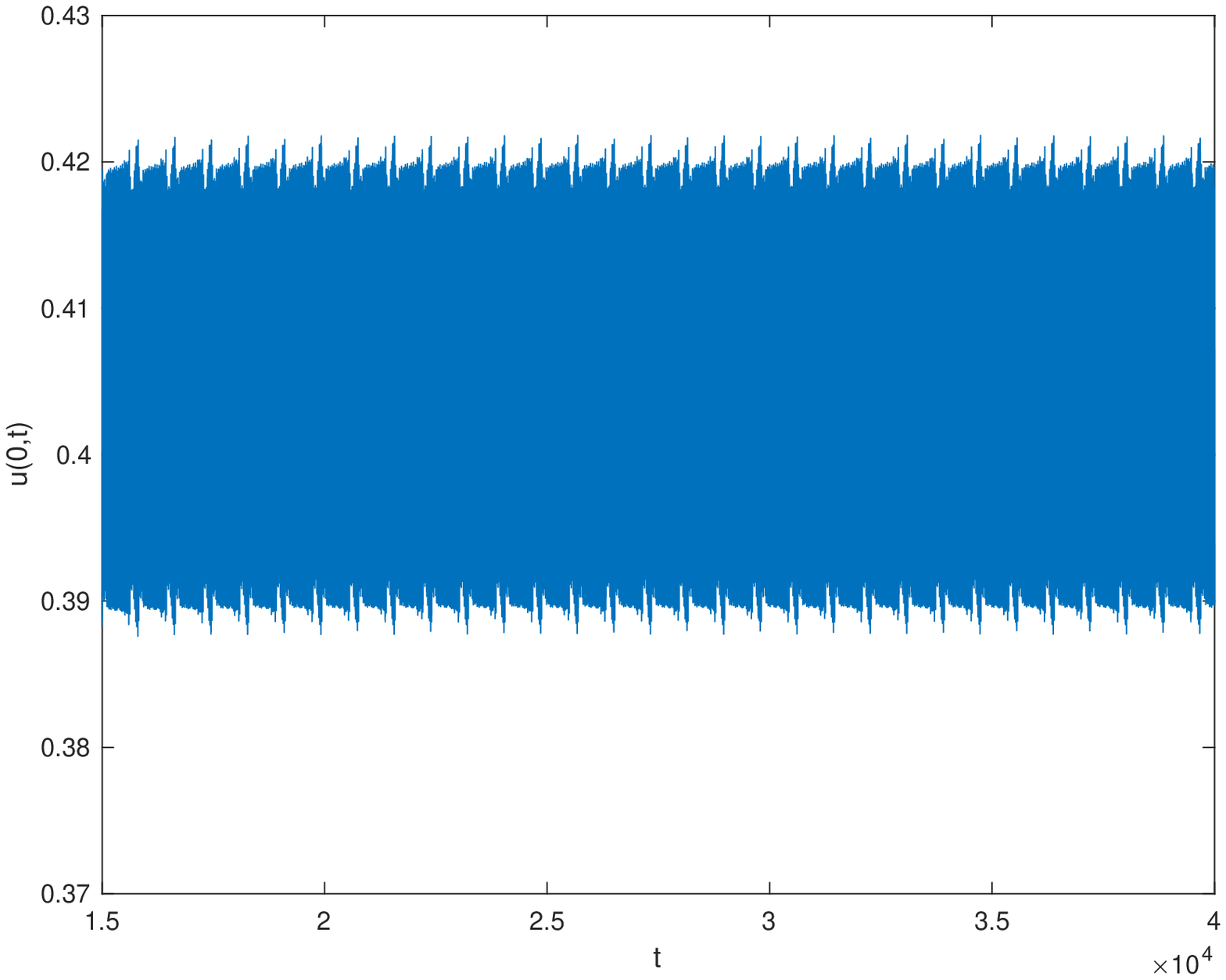}                
	\end{minipage}}
	\subfigure[$v(t,0)$]{                  
		\begin{minipage}{0.48\linewidth} 
			\centering                                     
			\includegraphics[width = 1.\textwidth,height = 0.4 \textwidth]{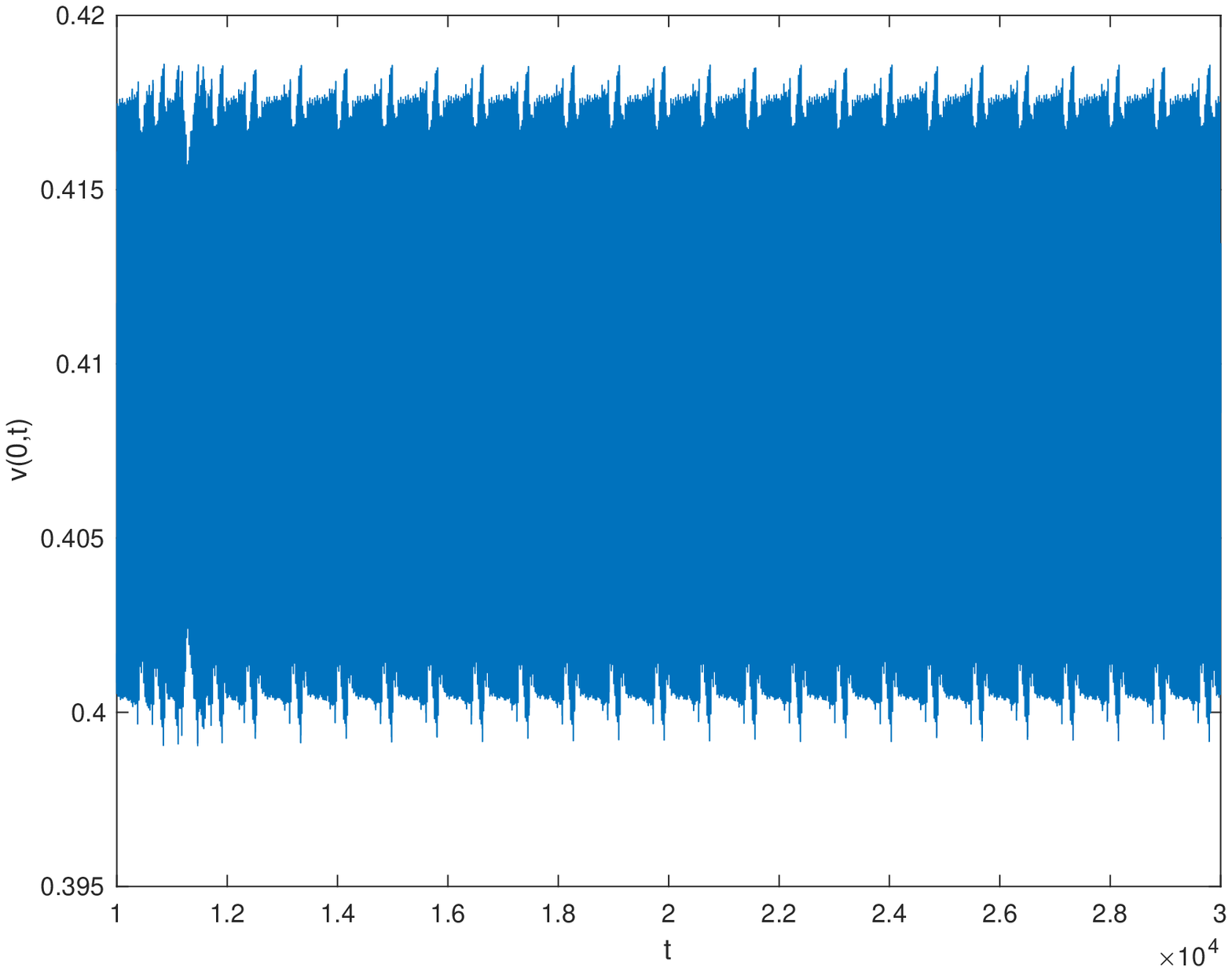}                
	\end{minipage}}
	\caption{A stable spatially non-homogeneous quasi-periodic solution in {$\mathbf D_7$}, with $(\alpha_1,\alpha_2)=(0.0220,0.0082)$ and the initial value functions are $u_0(x)=v_0(x) = u_0-0.05\sin x$..}
	\label{fig7-2}
\end{figure}

  \begin{figure}[htbp]
  	\centering                           
  	\subfigure[$u(t,x)$]{       
  		\begin{minipage}{0.48\linewidth} 
  			\centering                                      
  			\includegraphics[scale=0.33]{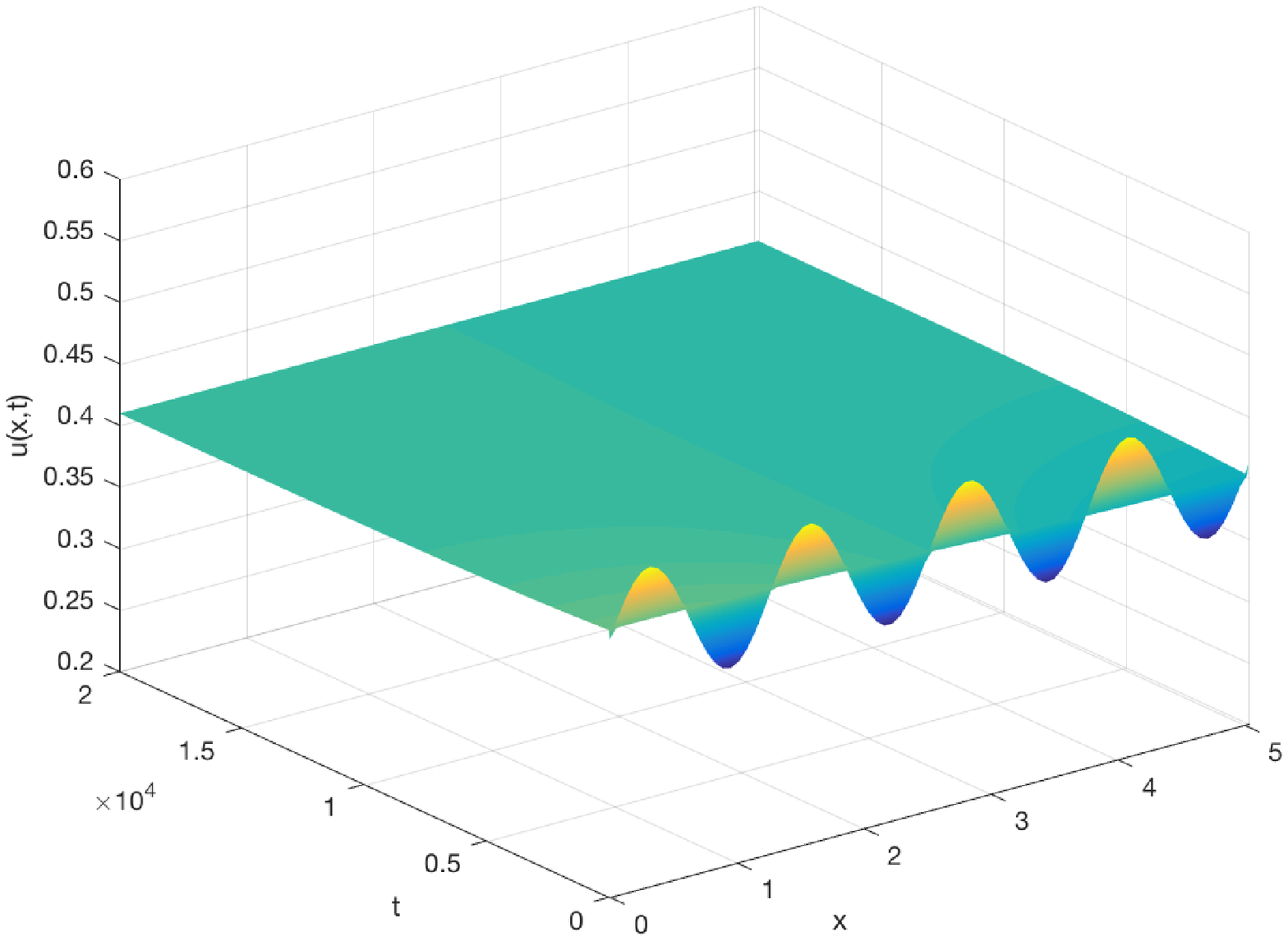}            
  	\end{minipage}}
  	\subfigure[$v(t,x)$]{                  
  		\begin{minipage}{0.48\linewidth} 
  			\centering                                     
  			\includegraphics[scale=0.33]{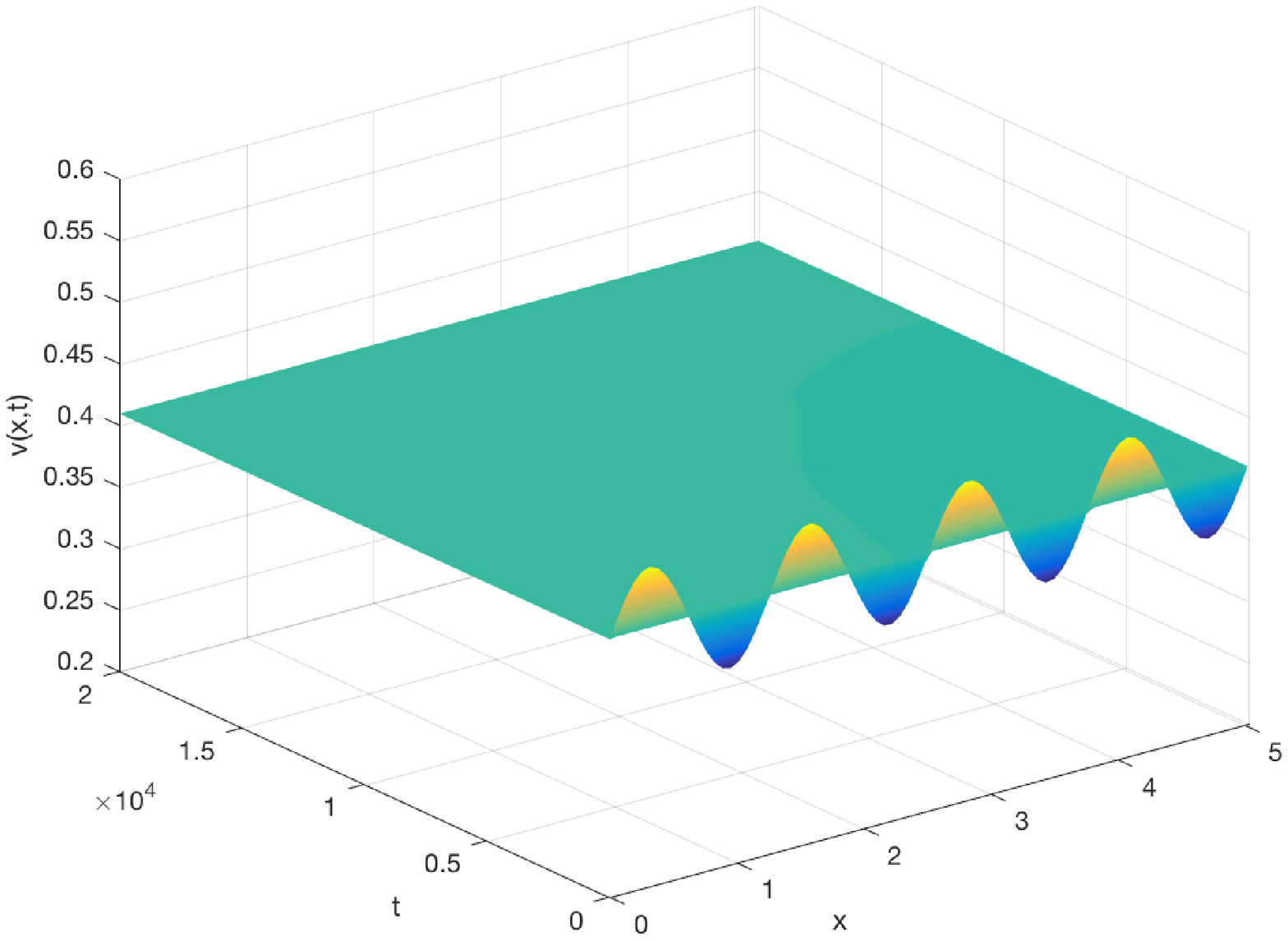}                
  		\end{minipage}}  	
  	\caption{A stable constant steady state $(u_0,v_0)$ in {$\mathbf D_7$},  with $(\alpha_1,\alpha_2)=(0.0220,0.0082)$ and the initial value functions are $u_0(x)=v_0(x) = u_0+0.05\sin 6x$.} 
  	\label{fig7-3}
  \end{figure}

\section{Conclusion}
A comprehensive investigation of the bifurcations of the modified Holling-Tanner systems at the positive equilibrium $(u_0,v_0)$ is given, and the spatio-temporal patterns induced by Turing-Hopf bifurcation are identified. The parameter ranges of the existence of multiple bifurcations are demonstrated.

All the parameters in \eqref{eqA} can be divided into three parts: the diffusion coefficients $(d_1,d_2)$, the auxiliary parameter $(a,b)$ and the main parameters $(r,l)$. When $a\leq \frac{(b+1)^2}{2(1-b)}$, the predator-prey system will eventually tend to balance in both time and space. When $a> \frac{(b+1)^2}{2(1-b)}$, the diffusion coefficients $d_2>d_1$ is a necessary condition for the system to form the spatial inhomogeneous patterns. That means, if the predator moves faster than prey, then the non-uniformly distribution of these two species  in space  are more likely to occur. 
Moreover, the large birth ratio $r=r_2/r_1$ of predator to prey is beneficial to the stability of the Holling-Tanner system, and the small spatial domains $l$ is not possible for the formation of the spatial patterns has been shown in our results.

The study of the synergistic effects of the two parameters $(r,l)$ on the Holling-Tanner system indicated that, the large space regions provide the possibility for the existence of more kinds of bifurcations and various spatio-temporal patterns. Among these possible bifurcations types, Turing-Hopf bifurcation is be mainly studied in this work and a wealth of self-organized spatio-temporal patterns generated of the Holling-Tanner system. 
It is worth mentioning that, the stable spatially non-homogeneous periodic or quasi-periodic solution can not be bifurcated by a simple Hopf bifurcation or steady state bifurcation in the reaction diffusion system subject to homogeneous Neumann boundary condition.

Compare the illustrations in Figure \ref{fig6-1} and Figure \ref{fig7-1}, we observed that the time-period of the spatially non-homogeneous quasi-periodic solution becomes large when the parameters is far away from $L_5$. 
But the eventually state of such solutions with the parameters continue to move away $L_5$ and close to $L_7$ is almost nothing to know. We conjecture that these solutions will break up due to the occurrence of some bifurcation. In order to verify it, higher order normal form and some global analysis methods are required.

\bibliographystyle{plain}
\bibliography{mybibfile}
\end{document}